 \DeclareFontFamily{U}{wncy}{}
    \DeclareFontShape{U}{wncy}{m}{n}{<->wncyr10}{}
    \DeclareSymbolFont{mcy}{U}{wncy}{m}{n}
    \DeclareMathSymbol{\Sh}{\mathord}{mcy}{"58}
\newtheorem{theorem}{Theorem}[section]
\newtheorem{lemma}[theorem]{Lemma}
\newtheorem{proposition}[theorem]{Proposition}
\newtheorem{corollary}[theorem]{Corollary}
\newtheorem{definition}[theorem]{Definition}
\newtheorem{remark}[theorem]{Remark}
\numberwithin{equation}{section}
\newcommand{\Gal}{\operatorname{Gal}}
\newcommand{\Zp}{\mathbb{Z}_p}
\newcommand{\ZZ}{\mathbb{Z}}
\newcommand{\ord}{\mathrm{ord}}
\newcommand{\GL}{\mathrm{GL}}
\newcommand{\cH}{\mathcal{H}}
\newcommand{\rhobar}{\bar{\rho}}
\newcommand{\Z}{\mathbb{Z}}
\newcommand{\p}{\mathfrak{p}}
\newcommand{\Q}{\mathbb{Q}}
\newcommand{\F}{\mathbb{F}}
\newcommand{\cL}{\mathcal{L}}
\newcommand{\cO}{\mathcal{O}}
\newcommand{\cS}{\mathcal{S}}
\newcommand{\Sel}{\mathrm{Sel}}
\newcommand{\Char}{\mathrm{char}}
\newcommand{\Gr}{\mathrm{Gr}}
\newcommand{\rank}{\mathrm{rank}}
\newcommand{\corank}{\mathrm{corank}}
\newcommand{\TT}{\mathbf{T}}
\renewcommand{\AA}{\mathbf{A}}
\newcommand{\X}{\mathcal X}
\newcommand{\tor}{\mathrm{tor}}
\newcommand{\cyc}{\mathrm{cyc}}
\begin{document}

\title[Anticyclotomic Selmer groups of positive coranks ]{Comparing anticyclotomic Selmer groups of positive coranks for congruent modular forms}

\author[J.~Hatley]{Jeffrey Hatley}
\address[Hatley]{
Department of Mathematics\\
Union College\\
Bailey Hall 202\\
Schenectady, NY 12308\\
USA}
\email{hatleyj@union.edu}

\author[A.~Lei]{Antonio Lei}
\address[Lei]{D\'epartement de math\'ematiques et de statistique\\
Pavillon Alexandre-Vachon\\
Universit\'e Laval\\
Qu\'ebec, QC, Canada G1V 0A6}
\email{antonio.lei@mat.ulaval.ca}

\begin{abstract}
We study the variation of Iwasawa invariants of the anticyclotomic Selmer groups of congruent modular forms under the Heegner hypothesis. In particular, we show that even if  the Selmer groups we study may have positive coranks, the mu-invariant vanishes for one modular form if and only if it vanishes for the other, and that their lambda-invariants are related by an explicit formula. This generalizes results of Greenberg-Vatsal for the cyclotomic extension, as well as results of Pollack-Weston and Castella-Kim-Longo for the anticyclotomic extension when the Selmer groups in question are cotorsion.
\end{abstract}

\thanks{The second named author's research is supported by the NSERC Discovery Grants Program 05710.}

\subjclass[2010]{11R18, 11F11, 11F33, 11R23 (primary); 11F85  (secondary).}
\keywords{Anticyclotomic extensions, Selmer groups, modular forms, congruences.}

\maketitle

\section{Introduction}\label{section:intro}

In their seminal paper \cite{greenbergvatsal}, Greenberg and Vatsal showed that if two elliptic curves $E_1$ and $E_2$ over $\Q$ have $p$-torsion subgroups which are isomorphic as $G_\Q = \mathrm{Gal}(\bar{\Q}/\Q)$-modules, then their associated Iwasawa invariants defined over $\Q_\infty$, the cyclotomic $\Zp$-extension of $\Q$, are deeply entangled. In particular, either both of their $\mu$-invariants vanish, or neither does; and in the former case, it is possible to read the $\lambda$-invariant of $E_1$ from the $\lambda$-invariant of the $E_2$. (Actually, \cite{greenbergvatsal} deals only with $p$-ordinary elliptic curves, and this result was generalized to supersingular curves by Kim \cite{kim09}.)

This fact about the variation of Iwasawa invariants was then generalized to Hida families of congruent $p$-ordinary modular forms by Emerton, Pollack, and Weston \cite{epw}, allowing them to prove many cases of the Iwasawa main conjecture. As before, this work was done over $\Q_\infty$, which is the only $\Zp$-extension of $\Q$. In both \cite{greenbergvatsal} and \cite{epw}, the fact that the Selmer groups associated to $p$-ordinary modular forms over $\Q_\infty$ are $\Lambda_\cyc$-{\it cotorsion}, where $\Lambda_\cyc=\Zp[[\Gal(\Q_\infty/\Q)]]$, played a crucial role in their arguments. Indeed, in extending this work to supersingular elliptic curves, Kim needed to use the modified $\pm$-Selmer groups of Kobayashi \cite{kobayashi03}, since the normal Selmer group associated to a $p$-supersingular elliptic curve over $\Q_\infty$ is never cotorsion. Similar results were obtained for $p$-non-ordinary modular forms by the authors \cite{hatleylei}.

In contrast to $\Q$, number fields of degree greater than $1$ can have more than one $\Z_p$-extension. For instance, if $K/\Q$ is an imaginary quadratic field, then the compositum of all $\Z_p$-extensions of $K$ is isomorphic to $\Z_p^2$; among these is the cyclotomic extension, which is the compositum of $K$ and $\Q_\infty$, and the {\it anticyclotomic} extension $K_\infty$, which is pro-dihedral over $\Q$.

Suppose $N>1$ is an integer coprime to the discriminant of $K/\Q$, and factor $N=N^+ N^-$ such that all the primes dividing $N^+$ (resp. $N^-$) are split (resp. inert) in $K$. Then the anticyclotomic Iwasawa theory for modular forms of level $N$ is readily divided into two cases, depending on whether the number of primes dividing $N^{-}$ is even or odd. When this number is odd, the anticyclotomic Iwasawa theory for modular forms closely resembles the cyclotomic Iwasawa theory. In particular, the Greenberg  Selmer group (or Kobayashi-type $\pm$-Selmer groups in the case of $p$-supersingular elliptic curves) of a modular form  is often a $\Lambda$-cotorsion module, where $\Lambda=\Zp[[\Gal(K_\infty/K)]]$, and many of the aforementioned results can be generalized to this setting (see e.g. \cite{pollackweston11,CastKimLon} for this case, or see \cite{kidwell16} for more general number fields).

Throughout this paper, we will instead assume that $N^{-}=1$, and hence it is the product of an {\it even} number of primes. Under this assumption, the Greenberg Selmer group $\Sel(K_\infty,f)$ of a $p$-ordinary modular form $f$ is {\it not} $\Lambda$-cotorsion. In fact, its Pontryagin dual has rank $1$ as a $\Lambda$-module (see Theorem \ref{thm:selmer-double-module}).

The purpose of this paper is to study the variation of Iwasawa invariants in this positive corank setting. Thus, we achieve results in the spirit of Greenberg and Vatsal, but we do so in the absence of a cotorsion hypothesis for the Greenberg Selmer groups associated to our modular forms. Instead, we study various modified Selmer groups, some of which are cotorsion, and some of which have corank $1$. See Section \ref{sec:selmer} for the definitions and basic properties of these modified Selmer groups.

The main technical lemmas which allow us to carry out the classical program in this positive corank setting are contained in Section \ref{sec:comparing}; this section is the heart of the paper, and the results in this section are applicable to $\Lambda$-modules of abitrary rank, so they will surely have applications to Iwasawa theory in other settings.

We study  the vanishing of $\mu$-invariants in Section \ref{sec:mu}, and we study the variation of $\lambda$-invariants in Section \ref{sec:main}. Finally, in Section \ref{sec:kriz-li}, we explain how our results give an algebraic analogue of recent (analytic) results of Kriz and Li \cite{KrizLi}.

\subsection{Notation and Assumptions}\label{sec:notation}

For any field $L$, we let $G_L$ denote the absolute Galois group $\mathrm{Gal}(\bar{L}/L)$. Throughout this article, $p$ denotes a fixed odd prime, and $\rhobar : G_\Q \to \GL_2(k)$ denotes a fixed, absolutely irreducible, $p$-ordinary and $p$-distinguished, modular mod $p$ Galois representation. Let $\bar{N}$ denote the (prime to $p$) Artin conductor of $\rhobar$. We will assume that $\bar{N}$ is square-free. We let $\mathcal{H}$ denote the Hida family of all $p$-ordinary, $p$-stabilized newforms $f$ whose residual mod $p$ Galois representations are isomorphic to $\rhobar$.

Give a modular form $f \in \mathcal H$, let $L_f$ denote the number field generated by its Fourier coefficients, and let $\mathcal O$ denote {the ring of integers of }its completion at a prime $\mathfrak{P}$ above $p$, with uniformizer $\varpi$. When studying two modular forms $f,g \in \mathcal{H}$, we will enlarge $\mathcal O$ as necessary so that it contains the coefficients of both of our modular forms.

Let $K / \Q$ be an imaginary quadratic extension of discriminant $-D_K<0$, coprime to $\bar{N}p$. Write $\bar{N}=\bar{N}^{+}\bar{N}^{-}$, where primes dividing $\bar{N}^{+}$ (resp. $\bar{N}^{-}$) are split (resp. inert) in $K$. We will assume $\bar{N}^- = 1$ and that $p=\mathfrak{p}\bar{\mathfrak{p}}$ splits in $K$. {The assumption that $\bar{N}^-=1$ seems important to our method, as we use it to guarantee the global-to-local maps defining certain Selmer group are surjective; see the discussion following Lemma \ref{lem:Hv-size} as well as the proofs of Lemma \ref{lem:LEO-and-CRK} and Proposition \ref{prop:gr-zero-sel-exact}. It is also present in the assumptions of \cite{LongoVigni}, which we cite in Section \ref{subsec:properties}. It is not immediately clear to the authors whether this assumption can be relaxed to $N^-$ having an even number of prime divisors.}

 Denote by $K_\infty$ the anticyclotomic $\Z_p$-extension of $K$, and for $n\ge0$ we write $K_n$ for the subextension of $K_\infty$ such that $K_n/K$ is of degree $p^n$. Let $\Lambda={\mathcal O}[[\Gamma]]$ be the associated Iwasawa algebra, where $\Gamma :=\mathrm{Gal}(K_\infty / K) \cong \Zp$. We write $\Gamma_n=\Gamma^{p^n}$ for all integers $n\ge0$.

If $\Sigma$ is any finite set of places of $K$ and $F \subset K_\infty$ is an extension of $K$, we denote by $F_\Sigma$ the maximal extension of $F$ unramified outside the places above $\Sigma$. We will write $H^i_\Sigma(F,*)$ for the Galois cohomology $H^i({F_\Sigma/F},*)$.

{Our results will ultimately apply to modular forms $f$ belonging to $\mathcal{H}$ and satisfying a number of additional hypotheses. One suite of hypotheses, labeled \textbf{(admiss.)}, is introduced in Section \ref{subsec:properties} in order to apply results of \cite{LongoVigni}. There is also a mild technical hypothesis \textbf{(H.div)} which is introduced in Section \ref{sec:residual-and-nonprimitive-selmer-groups}; see Remark \ref{remark:hdiv}.}

{We denote by $\mathcal{\hat{H}}$ the subset of $\mathcal H$ of modular forms satisfying all of the above hypotheses.}

\section{{Comparing $\Lambda$-modules of arbitrary rank}}\label{sec:comparing}

Recall that if $M$ is any finitely generated $\Lambda$-module, there is a pseudo-isomorphism
\begin{equation}\label{eq:pseudo}
M\sim \Lambda^{\oplus r}\oplus \bigoplus_{i=1}^s \Lambda/\varpi^{a_i}\oplus \bigoplus_{j=1}^t\Lambda/(F_j)
\end{equation}
for some integers $r,s,t\ge 0$, $a_i\ge1$ and some {distinguished} polynomials $F_j$. {The characteristic ideal of $M$ is defined to be
\[
\Char_\Lambda(M)=\left(\varpi^{\sum_{i=1}^sa_i}\prod_{j=1}^t F_j\right)\Lambda.
\]
}The $\mu$-invariant (respectively $\lambda$-invariant) of $M$ is defined to be $\sum_i a_i$ (respectively $\sum_j \deg(F_j)$) and will be denoted by $\mu(M)$ (respectively $\lambda(M)$). We will also write
\[
\lambda(M)=\lambda(\widehat{M});\quad \mu(M)=\mu(\widehat{M}),
\]
where $\widehat{M}$ is the Pontryagin dual of $M$.

It is well-known that if $A$, $B$ and $C$ are $\Lambda$-torsion modules sitting in a short exact sequence
\[
0\rightarrow A\rightarrow B\rightarrow C\rightarrow 0,
\]
then
\[
\Char_\Lambda(A)\Char_\Lambda(C)=\Char_\Lambda(B).
\]
This implies that
\begin{align*}
\lambda(A) + \lambda(C)& = \lambda(B);\\
\mu(A) + \mu(C)&= \mu(B).
\end{align*}
We will need the following slightly more general result on the additivity of Iwasawa invariants in short exact sequences.

\begin{proposition}\label{prop:additivity}
Given a short exact sequence of finitely-generated $\Lambda$-modules
\[
0 \rightarrow A \rightarrow B \rightarrow C \rightarrow 0,
\]
where $A$ is a torsion module, we have
\[
\lambda(A) + \lambda(C) = \lambda(B)
\]
and
\[
\mu(A) + \mu(C) = \mu(B).
\]
\end{proposition}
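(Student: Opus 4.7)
The plan is to reduce the statement to the known torsion version recalled just above the proposition. First, I would observe from \eqref{eq:pseudo} that neither $\mu(M)$ nor $\lambda(M)$ sees the free summands $\Lambda^{\oplus r}$, so for any finitely generated $\Lambda$-module $M$ one has $\mu(M)=\mu(M_\tor)$ and $\lambda(M)=\lambda(M_\tor)$, where $M_\tor$ denotes the torsion $\Lambda$-submodule of $M$. It therefore suffices to produce a short exact sequence
\[
0\to A\to B_\tor\to C_\tor\to 0
\]
and then invoke the torsion additivity stated before the proposition.

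To construct this sequence I would proceed as follows. Since $A$ is torsion and embeds in $B$, it lies in $B_\tor$. Let $D$ denote the image of $B_\tor$ under the surjection $B\twoheadrightarrow C$. Then $D$ is torsion and hence contained in $C_\tor$, and by construction we obtain an exact sequence
\[
0\to A\to B_\tor\to D\to 0.
\]

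The main (and essentially only) nontrivial step is to verify that $D=C_\tor$. For this I would examine the map $B/B_\tor\to C/D$ induced by $B\to C$: it is surjective because $B\to C$ is, and injective because any $b\in B$ whose image lies in $D$ differs from an element of $B_\tor$ by an element of $\ker(B\to C)=A\subseteq B_\tor$. Hence $B/B_\tor\cong C/D$, which is torsion-free, so the torsion submodule $C_\tor/D$ of $C/D$ must vanish, giving $D=C_\tor$. Applying the torsion case of additivity to the resulting exact sequence $0\to A\to B_\tor\to C_\tor\to 0$ then yields both claimed equalities. I do not expect any real obstruction here; the only subtle point is the identification $D=C_\tor$, which rests on the fact that $B/B_\tor$ is torsion-free and maps isomorphically to $C/D$.
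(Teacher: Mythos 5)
Your proof is correct and takes essentially the same route as the paper: reduce to the torsion submodules by producing a short exact sequence $0\to A\to B_\tor\to C_\tor\to 0$ and then invoke the torsion version of additivity recalled just above the proposition. Your explicit verification that $C_\tor$ equals the image of $B_\tor$ (via the induced isomorphism $B/B_\tor\cong C/D$ and the fact that $B/B_\tor$ is torsion-free) is slightly more careful than the paper's argument, which asserts the analogous identification $C_\tor\sim B_\tor/A$ in the language of pseudo-isomorphisms without spelling out the details.
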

\begin{proof}
By the structure theorem of finitely generated $\Lambda$-modules, $B$ is pseudo-isomorphic to
\[
\Lambda^r\oplus B_\tor
\]
for some integer $r\ge 0$.
Since $A$ is torsion, its image in $B$ lies inside $B_\tor$. Therefore, $B/A$ is pseudo-isomorphic to
\[
\Lambda^r\oplus B_\tor/A.
\]
Consequently, our short exact sequence says that
\[
C_\tor\sim B_\tor/A.
\]
Therefore,
\[
\Char_\Lambda( B_\tor)=\Char_\Lambda(A)\Char_\Lambda( C_\tor),
\]
which implies our result.
\end{proof}

\subsection{Results on $\mu$-invariants}

If $G$ is a finite $p$-group, we will write $e(G)$ for the $p$-exponent of $|G|$. Recall that $\Gamma_n$ denotes $\Gamma^{p^n}$ for $n\ge0$, where $\Gamma=\Gal(K_\infty/K)\cong \Zp$.
\begin{lemma}\label{lem:order-M/p}
Let $M$ be a finitely generated $\Lambda$-module and let $r,s,t$ be the integers defined by \eqref{eq:pseudo}. Then,
\[
e\left((M/\varpi)_{\Gamma_n}\right)=(r+s)p^n+O(1).
\]
\end{lemma}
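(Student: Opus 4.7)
The plan is to reduce to the case where $M$ equals the elementary module $E := \Lambda^{\oplus r} \oplus \bigoplus_{i=1}^{s} \Lambda/\varpi^{a_i} \oplus \bigoplus_{j=1}^{t} \Lambda/(F_j)$ from \eqref{eq:pseudo}, and then compute the $\Gamma_n$-coinvariants of $E/\varpi$ summand by summand via explicit calculation.

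For the reduction, I would use that a pseudo-isomorphism $M \to E$ has finite kernel $K$ and finite cokernel $C$. Applying the right-exact functor $-\otimes_\Lambda \Lambda/\varpi$ and using that $\mathrm{Tor}_1^\Lambda(C,\Lambda/\varpi) \cong C[\varpi]$ is finite, I obtain a map $M/\varpi \to E/\varpi$ whose kernel and cokernel are finite of orders bounded by quantities depending only on $|K|$ and $|C|$. Applying next the right-exact functor $(-)_{\Gamma_n}$, and using that $\mathrm{Tor}_1^\Lambda(N, \Lambda/\omega_n) \cong N[\omega_n]$ is bounded by $|N|$ for any finite $\Lambda$-module $N$ (where $\omega_n := (1+T)^{p^n} - 1$), I obtain a map $(M/\varpi)_{\Gamma_n} \to (E/\varpi)_{\Gamma_n}$ whose finite kernel and cokernel have orders bounded uniformly in $n$. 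Consequently $e((M/\varpi)_{\Gamma_n}) = e((E/\varpi)_{\Gamma_n}) + O(1)$, so it suffices to treat $M = E$.

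Next, I would fix a topological generator $\gamma$ of $\Gamma$, identify $\Lambda = \mathcal{O}[[T]]$ via $T = \gamma - 1$, and note that $N_{\Gamma_n} = N/\omega_n N$ for any $\Lambda$-module $N$. Since $\varpi \mid p$ in $\mathcal{O}$, the Frobenius identity in characteristic $p$ gives $\omega_n \equiv T^{p^n} \pmod{\varpi}$. A free summand $\Lambda$ then yields $(\Lambda/\varpi)_{\Gamma_n} \cong (\mathcal{O}/\varpi)[T]/(T^{p^n})$, contributing $p^n$ to $e$; a summand $\Lambda/\varpi^{a_i}$ with $a_i \geq 1$ becomes $\Lambda/\varpi$ after further reducing modulo $\varpi$, contributing another $p^n$; and a summand $\Lambda/(F_j)$ with $F_j$ distinguished satisfies $F_j \equiv T^{\deg F_j} \pmod{\varpi}$, so $\Lambda/(\varpi, F_j) \cong (\mathcal{O}/\varpi)[T]/(T^{\deg F_j})$ is already finite of size independent of $n$ and contributes only $O(1)$. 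Summing over the $r$ free, $s$ $\varpi$-power, and $t$ distinguished-polynomial summands gives $(r+s)p^n + O(1)$, as claimed.

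The main (modest) technical point is the reduction step, where one must verify that the pseudo-isomorphism error stays $O(1)$ after both the mod-$\varpi$ reduction and the passage to $\Gamma_n$-coinvariants; this is ultimately controlled by the finiteness of $N[\varpi]$ and $N[\omega_n]$ for finite $N$. The rest is routine algebra in $(\mathcal{O}/\varpi)[[T]]$.
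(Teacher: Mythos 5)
Your proof is correct and follows essentially the same route as the paper: reduce to the elementary module from \eqref{eq:pseudo}, observe that the $\Lambda/(F_j,\varpi)$ summands are finite and so contribute $O(1)$ to the $\Gamma_n$-coinvariants, and let the $(r+s)$ copies of $\Lambda/\varpi$ produce the $(r+s)p^n$ term via $\omega_n\equiv T^{p^n}\pmod{\varpi}$. The only difference is that you make explicit (via the Tor computation) the justification that passing to $\varpi$-reduction and then $\Gamma_n$-coinvariants only perturbs $e$ by $O(1)$ under a pseudo-isomorphism, which the paper leaves implicit.
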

\begin{proof}
We have the pseudo-isomorphism
\[
M/\varpi \sim(\Lambda/\varpi)^{r+s}\oplus \bigoplus_{j=1}^t \Lambda/(F_j,\varpi).
\]
As $\mathcal O$-modules,  $\Lambda/(F_j)\cong \mathcal O^{\deg(F_j)}$. So, we have in fact
\[
\Lambda/(F_j,\varpi)\cong ({\mathcal O}/ \varpi {\mathcal O})^{\deg(F_j)}
\]
as abelian groups. In particular
\[
e\left( \left(\Lambda/(F_j,\varpi)\right)_{\Gamma_n}\right)=O(1)
\]
for all $j$. Therefore, for all $n\ge0$, we have
\[
e\left((M/\varpi)_{\Gamma_n}\right)=(r+s)\times e\left((\Lambda/\varpi)_{\Gamma_n}\right)+O(1),
\]
which finishes the proof.
\end{proof}

This lemma has two immediate and useful corollaries. First, we have a criterion for the vanishing of the $\mu$-invariant of $M$.

\begin{corollary}\label{cor:mu-invariant-zero-condition}
Let $M$ be a finitely generated $\Lambda$-module. Then its $\mu$-invariant vanishes if and only if
\[
e\left((M/\varpi)_{\Gamma_n}\right)=r\times p^n+O(1),
\]
where $r=\rank_{\Lambda}M$.
\end{corollary}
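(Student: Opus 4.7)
My plan is to deduce this immediately from Lemma \ref{lem:order-M/p}, which already gives the asymptotic
\[
e\left((M/\varpi)_{\Gamma_n}\right)=(r+s)p^n+O(1),
\]
where $r=\rank_\Lambda M$ and $s$ is the number of elementary summands of the form $\Lambda/\varpi^{a_i}$ appearing in the pseudo-isomorphism \eqref{eq:pseudo}.

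The key observation is that since each $a_i\ge 1$, we have $\mu(M)=\sum_{i=1}^s a_i = 0$ if and only if $s=0$. So the plan is: if $\mu(M)=0$, then $s=0$ and the lemma gives exactly $e((M/\varpi)_{\Gamma_n}) = rp^n+O(1)$. Conversely, if this asymptotic holds, then comparing with the lemma forces $sp^n = O(1)$, which is only possible when $s=0$, and hence $\mu(M)=0$.

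There is no real obstacle here beyond this observation; the corollary is essentially a repackaging of Lemma \ref{lem:order-M/p} together with the remark that the $\varpi$-power summands are precisely what contribute to the $\mu$-invariant. The only subtlety worth noting, if one wants to be careful, is that the pseudo-isomorphism \eqref{eq:pseudo} may have a finite kernel and cokernel, but since we are already computing modulo $O(1)$ in the $p$-exponent (which absorbs any finite error), this does not affect the argument.
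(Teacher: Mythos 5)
Your argument is correct and is exactly the intended deduction: the paper presents this as an immediate corollary of Lemma \ref{lem:order-M/p} with no further proof, and your observation that $\mu(M)=\sum a_i=0$ if and only if $s=0$ (since each $a_i\ge 1$) is precisely the step being left implicit. Nothing further is needed.
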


This implies that $\mu$-invariants behave well under congruences of $\Lambda$-modules.

\begin{corollary}\label{cor:mu-vanish-together}
Let $M$ and $M'$ be two finitely generated $\Lambda$-modules such that
\[
\rank_{\Lambda}M=\rank_{\Lambda}M', \quad M/\varpi \cong M'/\varpi
\]
as $\Lambda$-modules. Then the $\mu$-invariant of $M$ vanishes if and only if that of $M'$ does.
\end{corollary}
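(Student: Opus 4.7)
The plan is to deduce this as an immediate corollary of Corollary \ref{cor:mu-invariant-zero-condition}, using the two hypotheses precisely to match both inputs of that criterion.

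First I would record that by Corollary \ref{cor:mu-invariant-zero-condition}, the vanishing of $\mu(M)$ is equivalent to the asymptotic equality
\[
e\bigl((M/\varpi)_{\Gamma_n}\bigr) = r \cdot p^n + O(1),
\]
where $r = \rank_\Lambda M$, and similarly for $M'$ with rank $r' = \rank_\Lambda M'$.

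Next I would observe that the two hypotheses pin down both sides of this equivalence. The rank hypothesis $\rank_\Lambda M = \rank_\Lambda M'$ gives $r = r'$, so the right-hand side $r p^n + O(1)$ is the same for $M$ and $M'$. The module isomorphism $M/\varpi \cong M'/\varpi$ over $\Lambda$ ensures in particular an isomorphism of $\Gamma_n$-coinvariants, so $(M/\varpi)_{\Gamma_n} \cong (M'/\varpi)_{\Gamma_n}$ as abelian groups for every $n$, and therefore
\[
e\bigl((M/\varpi)_{\Gamma_n}\bigr) = e\bigl((M'/\varpi)_{\Gamma_n}\bigr)
\]
for every $n \ge 0$.

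Combining these two observations, the condition from Corollary \ref{cor:mu-invariant-zero-condition} holds for $M$ if and only if it holds for $M'$, giving $\mu(M) = 0 \iff \mu(M') = 0$. There is no real obstacle here — the substantive content was already packaged into Lemma \ref{lem:order-M/p} and its preceding corollary; this statement is simply the symmetric form one obtains by reading that criterion for both modules at once.
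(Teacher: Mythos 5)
Your proof is correct and is precisely the argument the paper leaves implicit: the rank hypothesis equalizes the target $r\cdot p^n + O(1)$ in Corollary \ref{cor:mu-invariant-zero-condition}, while the $\Lambda$-isomorphism $M/\varpi\cong M'/\varpi$ equalizes $e\bigl((M/\varpi)_{\Gamma_n}\bigr)$ with $e\bigl((M'/\varpi)_{\Gamma_n}\bigr)$ for every $n$, so the criterion holds for one module if and only if it holds for the other.
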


\subsection{Results on $\lambda$-invariants}

Let us now show that we may also read off the $\lambda$-invariant by considering $\Gamma_n$-coinvariants of certain type of finitely generated $\Lambda$-modules.

\begin{lemma}\label{lem:lambda-inv-from-coinvariants}
Let $F\in \Lambda$ be a distinguished polynomial of degree $d$ and consider the $\Lambda$-module $M=\Lambda/(F)$. Then
\[
e\left((M/\varpi)_{\Gamma_n}\right)=d
\]
when $p^n\ge d$.
\end{lemma}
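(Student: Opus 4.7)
The plan is to pass to the Iwasawa presentation $\Lambda \cong \cO[[T]]$ via the choice of a topological generator $\gamma$ of $\Gamma$ with $T = \gamma - 1$, and then to reduce everything modulo $\varpi$ to exploit characteristic-$p$ binomial identities.

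First, since $F$ is a distinguished polynomial of degree $d$, its non-leading coefficients all lie in $(\varpi)$, so $F \equiv T^d \pmod{\varpi}$. Consequently
\[
M/\varpi \;\cong\; (\cO/\varpi)[[T]]/(F) \;\cong\; k[T]/(T^d),
\]
where $k = \cO/\varpi$. In particular $M/\varpi$ is finite, of order $|k|^d$.

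Next, to take $\Gamma_n$-coinvariants I would note that $\Gamma_n = \Gamma^{p^n}$ is topologically generated by $\gamma^{p^n}$, so
\[
(M/\varpi)_{\Gamma_n} \;=\; (M/\varpi)\big/\bigl((1+T)^{p^n}-1\bigr)(M/\varpi).
\]
The key point is that modulo $\varpi$ we are in characteristic $p$, so the Frobenius identity gives
\[
(1+T)^{p^n} - 1 \;\equiv\; T^{p^n} \pmod{\varpi}.
\]
Hence
\[
(M/\varpi)_{\Gamma_n} \;\cong\; k[T]/\bigl(T^d,\; T^{p^n}\bigr) \;=\; k[T]/\bigl(T^{\min(d,\,p^n)}\bigr).
\]

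Finally, under the hypothesis $p^n \ge d$ this collapses to $k[T]/(T^d)$, which has order $|k|^d$, and the stated value $e\bigl((M/\varpi)_{\Gamma_n}\bigr) = d$ drops out (interpreting $e$ as the $\cO$-length, consistently with how this count will feed into the pseudo-isomorphism bookkeeping of Lemma \ref{lem:order-M/p}). I do not anticipate any real obstacle here: the whole argument rests on the two elementary facts that distinguished polynomials reduce to $T^d$ mod $\varpi$ and that $(1+T)^{p^n} \equiv 1 + T^{p^n}$ mod $\varpi$; once $p^n \ge d$, the second relation is absorbed by the first.
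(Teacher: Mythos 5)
Your proof is correct and follows essentially the same route as the paper's: reduce $F$ to $T^d$ modulo $\varpi$ (calling the variable $X$ in the paper), use the characteristic-$p$ identity $(1+T)^{p^n}-1 \equiv T^{p^n}$ to identify the coinvariant ideal, and observe that for $p^n \ge d$ the relation $T^{p^n}$ is absorbed by $T^d$. Your parenthetical note that $e$ should be read as $\cO$-length is a fair clarification of the paper's bookkeeping, but the argument itself is the paper's argument.
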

\begin{proof}
Since $F$ is a distinguished polynomial, we have
\[
M/\varpi=\Lambda/(X^d,\varpi).
\]
Let $\omega_n=(1+X)^{p^n}-1$, where $1+X$ is any topological generator of $\Gamma$. Then,
\[
(M/\varpi)_{\Gamma_n}=\Lambda/(X^d,\omega_n,\varpi).
\]
But the binomial theorem tells us that $(\omega_n,\varpi)=(X^{p^n},\varpi)$. Consequently, when $p^n\ge d$, we have
\[
(M/\varpi)_{\Gamma_n}=\Lambda/(X^d,\omega_n,\varpi)=\Lambda/(X^d,\varpi)=(\cO/\varpi)^d
\]
and hence we are done.
\end{proof}

\begin{corollary}\label{cor:asymptotic-lambda-invariant}
Let $M$ be a $\Lambda$-module isomorphic to $\Lambda^{\oplus r}\oplus \bigoplus_{j=1}^t\Lambda/(F_j)$ for some distinguished polynomials $F_j$ (so in particular, the $\mu$-invariant vanishes and the pseudo-isomorphism \eqref{eq:pseudo} is an isomorphism). If $\lambda(M)$ denotes the $\lambda$-invariant of $M$, then
\[
e((M/\varpi)_{\Gamma_n})=r\times p^n+\lambda(M)
\]
when $n\gg 0$.
\end{corollary}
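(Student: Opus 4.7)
The plan is to reduce the computation to a summand-by-summand application of Lemma \ref{lem:lambda-inv-from-coinvariants}, exploiting the fact that both the functors $(-)/\varpi$ and $(-)_{\Gamma_n}$ commute with finite direct sums, and that $e$ is additive over direct products of finite $p$-groups.

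First I would apply $(-)/\varpi$ to the given isomorphism to obtain
\[
M/\varpi \;\cong\; (\Lambda/\varpi)^{\oplus r} \oplus \bigoplus_{j=1}^{t} \Lambda/(F_j,\varpi),
\]
and then take $\Gamma_n$-coinvariants, which splits the right-hand side into the corresponding direct sum of $\Gamma_n$-coinvariants. Since $e$ is additive on such decompositions, it suffices to compute the $p$-exponent of each summand separately.

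Next, for each torsion summand $\Lambda/(F_j)$ I would invoke Lemma \ref{lem:lambda-inv-from-coinvariants}: as soon as $p^n \ge \deg F_j$, we have $e\bigl((\Lambda/(F_j,\varpi))_{\Gamma_n}\bigr) = \deg F_j$. Choosing $n$ large enough that $p^n \ge \max_j \deg F_j$, the contribution of the torsion part is exactly $\sum_{j=1}^{t} \deg F_j = \lambda(M)$. For the free summand, I would carry out the same kind of computation as in the proof of Lemma \ref{lem:lambda-inv-from-coinvariants}: using $\omega_n = (1+X)^{p^n}-1$, the identity $(\omega_n,\varpi) = (X^{p^n},\varpi)$ gives
\[
(\Lambda/\varpi)_{\Gamma_n} \;=\; \Lambda/(\varpi,\omega_n) \;=\; \Lambda/(\varpi,X^{p^n}) \;\cong\; (\cO/\varpi)^{p^n},
\]
which contributes $p^n$ to the $p$-exponent (matching the normalization implicit in Lemma \ref{lem:lambda-inv-from-coinvariants}, in which $(\cO/\varpi)^d$ is assigned $p$-exponent $d$). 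Summing over the $r$ free summands gives $r \cdot p^n$.

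Combining the two contributions yields $e\bigl((M/\varpi)_{\Gamma_n}\bigr) = r \cdot p^n + \lambda(M)$ for all sufficiently large $n$. There is no real obstacle here: the only point that needs care is making sure $n$ is chosen large enough for Lemma \ref{lem:lambda-inv-from-coinvariants} to apply simultaneously to every torsion summand, which is automatic since there are only finitely many $F_j$.
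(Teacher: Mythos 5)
Your argument is correct and matches the proof the paper leaves implicit: since $(-)/\varpi$, $(-)_{\Gamma_n}$, and $e$ are all additive over finite direct sums, one simply applies Lemma~\ref{lem:lambda-inv-from-coinvariants} to each torsion factor $\Lambda/(F_j)$ and the analogous computation $(\Lambda/\varpi)_{\Gamma_n}\cong(\cO/\varpi)^{p^n}$ to each free factor, then sums. Your parenthetical about the normalization $e((\cO/\varpi)^d)=d$ correctly identifies the convention the paper is using (the residue field is treated as contributing exponent one per copy), and choosing $n$ with $p^n\ge\max_j\deg F_j$ is exactly the ``$n\gg0$'' the statement requires.
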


We will need to consider finitely generated $\Lambda$-modules $M$ with vanishing $\mu$-invariant  for which the pseudo-isomorphism \eqref{eq:pseudo} is injective, but not necessarily surjective. The latter condition is the same as saying that $M$ admits no non-trivial finite submodule. We shall write
\[
N=\Lambda^{\oplus r }\oplus \bigoplus_{j=1}^t\Lambda/(F_j)
\]
for the right-hand side of \eqref{eq:pseudo} and $C$ denotes the (finite) cokernel of $M\rightarrow N$. In particular, we have the short exact sequence
\[
0\rightarrow M\rightarrow N\rightarrow C\rightarrow 0,
\]
from which we deduce the following preliminary lemmas.

\begin{lemma}\label{lem:snake}Let $M,N$ and $C$ be as defined above. Then, there is an exact sequence
$$0\rightarrow C[\varpi]\rightarrow M/\varpi\rightarrow N/\varpi\rightarrow C/\varpi\rightarrow0.$$
\end{lemma}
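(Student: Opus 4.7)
The plan is to apply the snake lemma to the commutative diagram
\[
\begin{array}{ccccccccc}
0 & \to & M & \to & N & \to & C & \to & 0 \\
  &     & \downarrow \varpi & & \downarrow \varpi & & \downarrow \varpi & & \\
0 & \to & M & \to & N & \to & C & \to & 0,
\end{array}
\]
whose rows are the short exact sequence preceding the lemma and whose vertical arrows are multiplication by $\varpi$. This immediately produces the six-term exact sequence
\[
0 \to M[\varpi] \to N[\varpi] \to C[\varpi] \to M/\varpi \to N/\varpi \to C/\varpi \to 0,
\]
so everything reduces to verifying that $M$ and $N$ have no $\varpi$-torsion.

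For $N$, the point is that $\Lambda$ is $\varpi$-torsion-free, and each summand $\Lambda/(F_j)$ with $F_j$ distinguished is free as an $\cO$-module of rank $\deg F_j$ (as already noted in the proof of Lemma~\ref{lem:order-M/p}), hence also $\varpi$-torsion-free. Therefore $N = \Lambda^{\oplus r} \oplus \bigoplus_{j=1}^{t} \Lambda/(F_j)$ is $\varpi$-torsion-free. For $M$, recall that by assumption the pseudo-isomorphism \eqref{eq:pseudo} is injective, i.e., $M \hookrightarrow N$; since $N$ is $\varpi$-torsion-free, so is $M$.

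Substituting $M[\varpi] = 0$ and $N[\varpi] = 0$ into the six-term sequence above collapses it to
\[
0 \to C[\varpi] \to M/\varpi \to N/\varpi \to C/\varpi \to 0,
\]
which is the required exact sequence. There is no serious obstacle here; the only subtle point is the $\varpi$-torsion-freeness of $M$, which relies crucially on the hypotheses that $\mu(M) = 0$ (ruling out $\Lambda/\varpi^{a_i}$ summands on the right-hand side of \eqref{eq:pseudo}) and that $M$ has no non-trivial finite submodule (which upgrades the pseudo-isomorphism to an injection).
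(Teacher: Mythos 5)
Your proof is correct and follows essentially the same route as the paper: apply the snake lemma to multiplication by $\varpi$ on the short exact sequence $0\to M\to N\to C\to 0$ and observe that $N[\varpi]=0$ because $N$ has no $\Lambda/\varpi^{a_i}$ summands. The only cosmetic difference is that you argue $M[\varpi]=0$ separately via the injection $M\hookrightarrow N$, whereas the paper lets this fall out of the six-term sequence automatically once $N[\varpi]=0$ is known.
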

\begin{proof}
The snake lemma gives the exact sequence
\[
0\rightarrow M[\varpi]\rightarrow N[\varpi]\rightarrow C[\varpi]\rightarrow M/\varpi\rightarrow N/\varpi\rightarrow C/\varpi\rightarrow 0.
\]
But $N[\varpi]=0$ by definition, hence the result.
\end{proof}

In particular, we may consider $C[\varpi]$ as a submodule of $M/\varpi$ and the quotient $(M/\varpi)/C[\varpi]$ makes sense.

\begin{lemma}\label{lem:coinvariants}
Let $M,N$ and $C$ be as in Lemma~\ref{lem:snake}.
Let $n\ge1$ be an integer such that $(C/\varpi)^{\Gamma_n}=\left((M/\varpi)/C[\varpi]\right)^{\Gamma_n}=\left(C/C[\varpi]\right)^{\Gamma_n}=0$, then
\[
e((M/\varpi)_{\Gamma_n})=e((N/\varpi)_{\Gamma_n}).
\]
\end{lemma}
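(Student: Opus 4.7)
The plan is to exploit Lemma~\ref{lem:snake} by factoring its four-term exact sequence through the image $I := (M/\varpi)/C[\varpi]$, yielding two short exact sequences
\[
0 \to C[\varpi] \to M/\varpi \to I \to 0, \qquad 0 \to I \to N/\varpi \to C/\varpi \to 0.
\]
Because $\Gamma_n \cong \Zp$ has $p$-cohomological dimension one, each of these produces a six-term long exact sequence of $\Gamma_n$-invariants and coinvariants. The hypothesis $I^{\Gamma_n}=0$ collapses the first to
\[
0 \to (C[\varpi])_{\Gamma_n} \to (M/\varpi)_{\Gamma_n} \to I_{\Gamma_n} \to 0,
\]
and combined with $(C/\varpi)^{\Gamma_n}=0$ the second becomes
\[
0 \to I_{\Gamma_n} \to (N/\varpi)_{\Gamma_n} \to (C/\varpi)_{\Gamma_n} \to 0.
\]
Multiplicativity of orders in these two sequences reduces the desired equality to showing that $|(C[\varpi])_{\Gamma_n}| = |(C/\varpi)_{\Gamma_n}|$.

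For this auxiliary equality I would use the third hypothesis together with the finiteness of $C$: any finite subquotient $X$ of $C$ satisfies $|X^{\Gamma_n}|=|X_{\Gamma_n}|$, since the Herbrand quotient of $\gamma-1$ acting on a finite abelian group is trivial. Applying the six-term sequence to $0 \to C[\varpi] \to C \xrightarrow{\varpi} \varpi C \to 0$, and noting the $\Lambda$-equivariant isomorphism $\varpi C \cong C/C[\varpi]$, the assumption $(C/C[\varpi])^{\Gamma_n}=0$ forces both $(\varpi C)^{\Gamma_n}$ and $(\varpi C)_{\Gamma_n}$ to vanish; the long exact sequence then collapses to give $(C[\varpi])_{\Gamma_n} \cong C_{\Gamma_n}$. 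Applying the six-term sequence to $0 \to \varpi C \to C \to C/\varpi \to 0$ and using the vanishing of $(\varpi C)^{\Gamma_n}$, $(\varpi C)_{\Gamma_n}$ and of $(C/\varpi)^{\Gamma_n}$, one reads off $(C/\varpi)_{\Gamma_n}\cong C_{\Gamma_n}$ as well. Hence $(C[\varpi])_{\Gamma_n}$ and $(C/\varpi)_{\Gamma_n}$ both coincide with $C_{\Gamma_n}$, so they have the same order and the lemma follows.

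The main labor is the auxiliary computation on $C$; the rest is routine diagram chasing with long exact sequences for the pro-$p$ group $\Gamma_n$. The only real subtlety is keeping track of which of the three hypotheses is needed for which collapse, and remembering to invoke the $\Lambda$-equivariant identification $\varpi C \cong C/C[\varpi]$ so that the hypothesis on $C/C[\varpi]$ actually bites on $\varpi C$.
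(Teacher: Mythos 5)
Your proof is correct and follows essentially the same route as the paper: factor the four-term exact sequence of Lemma~\ref{lem:snake} through $I=(M/\varpi)/C[\varpi]$, use the first two vanishing hypotheses to collapse the two resulting six-term sequences, and reduce the claim to $e((C[\varpi])_{\Gamma_n})=e((C/\varpi)_{\Gamma_n})$. For that auxiliary equality the paper simply applies the identical two-sequence splitting argument to $0\to C[\varpi]\to C\to C\to C/\varpi\to 0$ and subtracts the two resulting exponent identities, whereas you additionally invoke the Herbrand-quotient observation for finite modules to upgrade $(\varpi C)^{\Gamma_n}=0$ to $(\varpi C)_{\Gamma_n}=0$, obtaining the mildly stronger conclusion that both $(C[\varpi])_{\Gamma_n}$ and $(C/\varpi)_{\Gamma_n}$ are isomorphic to $C_{\Gamma_n}$; this is a correct but purely cosmetic variation.
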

\begin{proof}
On applying the snake lemma to the short exact sequence $$0\rightarrow (M/\varpi)/C[\varpi]\rightarrow N/\varpi\rightarrow C/\varpi\rightarrow0,$$
 our assumption that $(C/\varpi)^{\Gamma_n}=0$ gives the short exact sequence
\[
0\rightarrow \left((M/\varpi)/C[\varpi]\right)_{\Gamma_n}\rightarrow( N/\varpi)_{\Gamma_n}\rightarrow (C/\varpi)_{\Gamma_n}\rightarrow0.
\]
This implies that
\[
e(( N/\varpi)_{\Gamma_n})=e((C/\varpi)_{\Gamma_n})+ e\left(\left((M/\varpi)/C[\varpi]\right)_{\Gamma_n}\right).
\]
Similarly, the assumption that $\left((M/\varpi)/C[\varpi]\right)^{\Gamma_n}=0$ gives the following exact sequence
\[
0\rightarrow (C[\varpi])_{\Gamma_n}\rightarrow (M/\varpi)_{\Gamma_n}\rightarrow \left((M/\varpi)/C[\varpi]\right)_{\Gamma_n}\rightarrow 0
\]
and hence
\[
e((M/\varpi)_{\Gamma_n})=e((C[\varpi])_{\Gamma_n})+ e\left(\left((M/\varpi)/C[\varpi]\right)_{\Gamma_n}\right).
\]
On combining these two equalities, we deduce that
\[
e((M/\varpi)_{\Gamma_n})=e(( N/\varpi)_{\Gamma_n})+e((C[\varpi])_{\Gamma_n})-e((C/\varpi)_{\Gamma_n}).
\]
It remains to show that $e((C[\varpi])_{\Gamma_n})=e((C/\varpi)_{\Gamma_n})$.

Consider the exact sequence
\[
0\rightarrow C[\varpi]\rightarrow C\rightarrow C\rightarrow C/\varpi\rightarrow 0.
\]
The same argument shows us that
\begin{align*}
e(C_{\Gamma_n})&=e\left((C/C[\varpi])_{\Gamma_n}\right)+e((C/\varpi)_{\Gamma_n});\\
e(C_{\Gamma_n})&=e((C[\varpi])_{\Gamma_n})+e\left((C/C[\varpi])_{\Gamma_n}\right).
\end{align*}
Hence, $e((C[\varpi])_{\Gamma_n})=e((C/\varpi)_{\Gamma_n})$ and we are done.
\end{proof}

While the hypothesis on the $\Gamma_n$-invariants in Lemma~\ref{lem:coinvariants} may not hold in general, we show below that {it holds after twisting. Let $\chi$ be a fixed isomorphism of abelian groups $\Gamma\stackrel{\sim}{\longrightarrow}1+p\Zp$}.
Given a $\Lambda$-module $P$ and $i\in\ZZ$, we shall write $P(i)$ for the {the $\Lambda$-module $P\otimes\chi^i$. We may identify $P(i)$ with $P\otimes_{\Zp}\Zp\cdot e_i$, where $e_i$ is some fixed element on which $\Gamma$ acts via $\chi^i$.}

\begin{lemma}\label{lem:twists}
Let $n\ge 0$ be an integer and $P$  a finitely generated $\Lambda$-module. For all but finitely many integers $i$, the {twisted $\Lambda$-module} $P(i)$ satisfies $P(i)^{\Gamma_n}=0$.
\end{lemma}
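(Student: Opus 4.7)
The plan is to invoke the structure theorem: $P$ is pseudo-isomorphic to
\[
N=\Lambda^{\oplus r}\oplus\bigoplus_s\Lambda/\varpi^{a_s}\oplus\bigoplus_j\Lambda/(F_j),
\]
and since twisting by $\chi^i$ is an exact functor which does not alter the underlying abelian group, the pseudo-iso $P\sim N$ yields one $P(i)\sim N(i)$ with the same finite kernel and cokernel for every $i$. So it is enough to deal with each elementary summand of $N$ and then recombine.

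Fix a topological generator $\gamma_0\in\Gamma$, put $X=\gamma_0-1$, and set $u=\chi(\gamma_0)^{p^n}$. For any $\Lambda$-module $Q$ one has $Q(i)^{\Gamma_n}=\ker(\mu_i\colon Q\to Q)$, where $\mu_i:=u^i(1+X)^{p^n}-1\in\Lambda$. Since $u\in 1+p\Zp$ reduces to $1$ modulo $\varpi$, the reduction $\mu_i\bmod\varpi$ equals $X^{p^n}$, so by Weierstrass preparation $\mu_i=U_iW_i$ with $U_i\in\Lambda^\times$ and $W_i$ a distinguished polynomial of degree $p^n$ whose roots vary with $i$. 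For $Q=\Lambda$, $\mu_i$ is a nonzero element of a domain, hence $\Lambda(i)^{\Gamma_n}=0$ for every $i$. For $Q=\Lambda/\varpi^a$, $W_i$ is coprime to $\varpi$ in the UFD $\Lambda$, so multiplication by $W_i$ is injective on $\Lambda/\varpi^a$, and the invariants again vanish for every $i$. For $Q=\Lambda/(F)$ with $F$ distinguished, $\ker(W_i)$ on $\Lambda/(F)$ is nontrivial iff $W_i$ and $F$ share a root; a root $\alpha$ of $F$ is a root of $W_i$ iff $u^i=(1+\alpha)^{-p^n}$, and since $u$ has infinite order in $1+p\Zp$, this equation has at most one integer solution $i$ for each such $\alpha$, so at most $\deg F$ values of $i$ are bad.

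Summing over the finitely many summands produces a finite set of integers outside of which $N(i)^{\Gamma_n}=0$. To transfer the conclusion back to $P$ one uses the pseudo-iso $P\to N$: since $P$ is understood (by the preceding discussion) to have no nontrivial finite $\Lambda$-submodule, the map is an honest injection $P\hookrightarrow N$, which stays injective after twisting, giving $P(i)^{\Gamma_n}\subseteq N(i)^{\Gamma_n}=0$ for the same cofinite set of $i$. The main obstacle I anticipate is precisely this final step. A finite $\Lambda$-submodule of $P$ on which $\Gamma$ acts trivially would remain fully $\Gamma_n$-invariant in every twist $P(i)$, since $\chi^i\equiv 1\pmod\varpi$; so the transfer from $N$ to $P$ really does require the hypothesis that $P$ has no nontrivial finite $\Lambda$-submodule (or at least no such submodule on which $\Gamma$ acts trivially), whereas the summand-by-summand analysis on $N$ itself is robust.
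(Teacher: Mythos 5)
Your route and the paper's are genuinely different. The paper's proof is a short eigenspace argument and does not invoke the structure theorem at all: under the identification $P(i)=P\otimes e_i$, the invariants $P(i)^{\Gamma_n}$ correspond to the $\Lambda$-submodule $E_i=\{x\in P:\gamma_n\cdot x=\chi^{-i}(\gamma_n)x\}$ of $P$, and the paper asserts that $\bigoplus_{i\in\ZZ}E_i$ embeds as a $\Lambda$-submodule of $P$, so that finite generation (hence Noetherianity) of $P$ forces all but finitely many $E_i$ to vanish. Your decomposition via the structure theorem is far more explicit, and your summand-by-summand computations (injectivity of $\mu_i$ on $\Lambda$ and on $\Lambda/\varpi^a$, the root-matching bound for $\Lambda/(F)$) are all correct.

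The gap you flag at the end is real, and it is worth pointing out that the paper's own argument hits the same obstruction without acknowledging it. If $P$ has a nonzero $\varpi$-torsion element $x$ on which $\Gamma$ acts trivially (for instance $P=\mathcal{O}/\varpi$ viewed as $\Lambda/(\varpi,X)$), then since $\chi^{-i}(\gamma_n)\equiv 1\pmod{\varpi}$ for every $i$, one has $x\in E_i$ for all $i$; the $E_i$ are not in direct sum inside $P$, the Noetherian step collapses, and indeed $P(i)^{\Gamma_n}\neq 0$ for every $i$ in this example. So the lemma as stated is too strong for arbitrary finitely generated $\Lambda$-modules, and the hypothesis you correctly anticipate needing to transfer the conclusion from the elementary module $N$ back to $P$ — that $P$ have no nontrivial finite $\Lambda$-submodule, or at least no $\varpi$-torsion on which $\Gamma$ acts trivially — is genuinely necessary, not an artifact of your approach. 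Your honesty about the obstruction is a strength; neither your argument nor the paper's establishes the statement in the generality claimed, and the safe formulation is the one you effectively prove for $P$ injecting into its elementary model.
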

\begin{proof}
{ Under the identification $P(i)=P\otimes \Zp\cdot e_i$, the $\Lambda$-submodule $P(i)^{\Gamma_n}$ is given by
\[
\left\{x\otimes e_i\in P(i): \gamma_n\cdot x=\chi^{-i}(\gamma_n)x\right\},
\]
where $\gamma_n=\gamma^{p^n}$, which is a topological generator of $\Gamma_n$. It then follows that the direct sum $\bigoplus_{i\in \ZZ} P(i)^{\Gamma_n}\otimes e_{-i}$}
is isomorphic to a $\Lambda$-submodule of $P$. Since $P$ is finitely generated, our result follows.
\end{proof}

The following proposition on $\lambda$-invariants will play an important role in \S\ref{sec:main}, allowing us to compare non-primitive Selmer groups of positive corank.
\begin{proposition}\label{prop:computelambda}
Let $M$ be a finitely generated $\Lambda$-module with vanishing $\mu$-invariant. Furthermore, suppose that $M$ admits no non-trivial finite submodule. Then, for all but finitely many integers $i$ and $n\gg0$,
\[
e((M(i)/\varpi)_{\Gamma_n})=r\times p^n+\lambda(M).
\]
\end{proposition}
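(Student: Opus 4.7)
The hypothesis that $\mu(M) = 0$ and $M$ admits no non-trivial finite submodule ensures that the pseudo-isomorphism \eqref{eq:pseudo} is realised as an injection $M \hookrightarrow N := \Lambda^{\oplus r} \oplus \bigoplus_{j} \Lambda/(F_j)$ with finite cokernel $C$, the $F_j$ distinguished polynomials and $\sum_j \deg F_j = \lambda(N) = \lambda(M)$ (the last equality by Proposition~\ref{prop:additivity} applied to $0\to M\to N\to C\to 0$, since $\lambda(C) = 0$ for the finite module $C$). Twisting gives $0 \to M(i) \to N(i) \to C(i) \to 0$; because $\chi^i$ takes values in $1 + p\Zp \subseteq 1 + \varpi\cO$, the continuous $\cO$-algebra automorphism $\sigma_i$ of $\Lambda$ determined by $\gamma \mapsto \chi^i(\gamma)\gamma$ reduces to the identity modulo $\varpi$. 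Combined with Weierstrass preparation applied to each $\sigma_i^{-1}(F_j)$, this shows $N(i) \cong \Lambda^{\oplus r} \oplus \bigoplus_j \Lambda/(G_j^{(i)})$ for distinguished $G_j^{(i)}$ of the same degrees as $F_j$; hence Corollary~\ref{cor:asymptotic-lambda-invariant} applies and
\[
e\bigl((N(i)/\varpi)_{\Gamma_n}\bigr) = rp^n + \lambda(N(i)) = rp^n + \lambda(M) \quad \text{for } n \gg 0.
\]

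The plan is then to transfer this equality to $M(i)$ by invoking Lemma~\ref{lem:coinvariants} applied to the twisted triple $(M(i), N(i), C(i))$: provided its three $\Gamma_n$-invariant vanishing hypotheses hold, we obtain $e((M(i)/\varpi)_{\Gamma_n}) = e((N(i)/\varpi)_{\Gamma_n})$, and we are done. The hypotheses concern the $\Gamma_n$-invariants of $C(i)/\varpi$, $(M(i)/\varpi)/C(i)[\varpi]$, and $C(i)/C(i)[\varpi]$; I would verify each by applying Lemma~\ref{lem:twists} to the respective finitely-generated $\Lambda$-modules $C/\varpi$, $(M/\varpi)/C[\varpi]$, and $C/C[\varpi]$, concluding that for all but finitely many $i$ these vanishings do hold.

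The principal obstacle I expect is coordinating the two quantifiers: Lemma~\ref{lem:twists} is stated for a fixed $n$, whereas the proposition demands a single cofinite set of $i$'s that works for all $n \gg 0$ simultaneously. One handles this by noting that the exceptional set of $i$'s is controlled by finitely many eigenvalue relations for the $\gamma_n$-action on $\cO$-finite auxiliary modules derived from $C$ and the torsion summands of $N$; as $n$ grows, these relations stabilise once $n$ is large enough relative to the $p$-exponents appearing in $C$ and the $F_j$'s (compare Lemma~\ref{lem:lambda-inv-from-coinvariants}, where $\gamma_n - 1 \equiv X^{p^n} \pmod \varpi$ forces the relevant $\Gamma_n$-action to stabilise on each finite torsion piece for $p^n$ larger than its exponent). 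This bookkeeping—together with verifying that the three conditions of Lemma~\ref{lem:coinvariants} are indeed the ones delivered by Lemma~\ref{lem:twists} applied to the modules named above—is the delicate technical step I would expect to demand the most care.
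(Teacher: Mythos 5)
Your proof follows essentially the same route as the paper's: embed $M$ into $N$ with finite cokernel $C$, twist, invoke Lemma~\ref{lem:twists} to secure the three $\Gamma_n$-invariant vanishing hypotheses of Lemma~\ref{lem:coinvariants}, and then compute $e((N(i)/\varpi)_{\Gamma_n})$ via Corollary~\ref{cor:asymptotic-lambda-invariant}. The points where you are more explicit than the paper---using Weierstrass preparation on $\sigma_i^{-1}(F_j)$ to see that the twisted torsion summands are again cut out by distinguished polynomials of the same degree, and flagging the quantifier coordination between Lemma~\ref{lem:twists} (fixed $n$) and the "$n\gg0$" in the conclusion---are genuine points of care and worth noting, but they do not constitute a different argument; the paper handles the first by a slightly looser remark about "the natural bijection $\Lambda\to\Lambda(i)$" and the second by fixing $n$ at the outset, which suffices since the downstream applications (e.g.\ Proposition~\ref{prop:compare-non-primitive}) only require a single admissible pair $(i,n)$.
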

\begin{proof}
Let $M,N$ and $C$ be as given in Lemma~\ref{lem:snake} and fix $n$.
Lemma~\ref{lem:twists} tells us that for all but finitely many $i$, we have
$$(C(i)/\varpi)^{\Gamma_n}=\left((M(i)/\varpi)/C(i)[\varpi]\right)^{\Gamma_n}=\left(C(i)/C(i)[\varpi]\right)^{\Gamma_n}=0.$$
Hence, we have
\[
e((M(i)/\varpi)_{\Gamma_n})=e((N(i)/\varpi)_{\Gamma_n})
\]
following Lemma~\ref{lem:coinvariants}.

Note that the $\cO$-linear map induced by $\sigma\otimes e_i\mapsto \chi(\sigma)^{-i}\sigma
$ for $\sigma\in\Gamma$ gives a $\Lambda$-isomorphism from $\Lambda(i)$ to $\Lambda$. Let $F=\sum_{n\ge0}c_n(\gamma-1)^n\in\Lambda$. The image of $F\otimes e_i$ in $\Lambda$ is given by
\[
\sum_{n\ge0}c_n(\chi^{-i}(\gamma)\gamma-1)^n=\sum_{n\ge0}c_n\left(\chi^{-i}(\gamma)(\gamma-1)+\chi^{-i}(\gamma)-1\right)^n.
\]
Since $\chi^{-i}(\gamma)-1\in p\Zp$ and $\chi(\gamma)\in\Zp^\times$, if $F$ is a distinguished polynomial of degree $d$, the image of $F\otimes e_i$ in $\Lambda$ is, up to a unit in $\Zp^\times$, a distinguished polynomial of the same degree.

Recall that $N$ is of the form $\Lambda^{\oplus r }\oplus \bigoplus_{j=1}^t\Lambda/(F_j)$ for some distinguished polynomials $F_j\in \Lambda$. The discussion above tells us that there is a $\Lambda$-isomorphism 
\[
N(i)\cong\Lambda^{\oplus r }\oplus \bigoplus_{j=1}^t\Lambda/(G_j),
\]
where $G_j$ are distinguished polynomials with $\deg(G_j)=\deg(F_j)$.
Therefore,
\[
e((N(i)/\varpi)_{\Gamma_n})=e((N/\varpi)_{\Gamma_n})=r\times p^n+\lambda(M)
\]
for $n\gg0$ thanks to Corollary~\ref{cor:asymptotic-lambda-invariant} and so we are done.
\end{proof}

\section{{Anticyclotomic Selmer Groups}}\label{sec:selmer}

{Let $f \in \cH$ be a modular form of weight $k=2r$ and tame level $\Gamma_0(N)$ as in Section \ref{sec:notation}}.  Following \cite{LongoVigni}, we denote by $T$ the associated $G_\Q$-representation constructed in \cite{nekovar1992}; it is a free rank-2 ${\mathcal O}$-module. Then $V=T \otimes L_{f,\mathfrak{P}}$ is the $r$-twist of the Galois representation constructed by Deligne. Finally, let $A = V/T$. Our choice of normalization makes $V$ self-dual, and after possibly rescaling the isomorphism $V \simeq V^\ast = \mathrm{Hom}_{L_{f,{\mathfrak{P}}}}(V, L_{f,\mathfrak{P}}(1))$, the lattice $T \subset V$ is also self-dual.

In this section, following \cite[Section 2.1]{Castella} closely, we define the various Selmer groups associated to $f$ which we will study.

\subsection{Definitions of modified Selmer groups}\label{subsec:defs} Since $f$ is $p$-ordinary, there exist $G_{\Q_p}$-stable filtrations
\begin{eqnarray}
\label{seq:filtration}&0 \rightarrow V' \rightarrow V \rightarrow V'' \rightarrow 0,\\
\notag &0 \rightarrow T'  \rightarrow T \rightarrow T'' \rightarrow 0,
\end{eqnarray}
where $V'$ and $V''$ are one-dimensional over $L_{f,\mathfrak{P}}$, whereas $T'$ and $T''$ are free of rank one over $\cO$.

Let $F$ be any finite extension of $K$, and for any place $v$ of $F$, define
\[
H^1_f (F_v, V) = \begin{cases} \mathrm{ker}\left( H^1(F_v,V) \rightarrow {H^1(I_v,V)} \right), & v \nmid p, \\
\mathrm{ker}\left( H^1(F_v,V) \rightarrow H^1(F_v,V'') \right), & v \mid p,
\end{cases}
\]
{where $I_v$ denotes the inertia group at $v$. For primes above $p$, $H^1_f(F_v,V)$} is defined by the filtration \eqref{seq:filtration}; we refer to {this subgroup} as the Greenberg local condition. Using the tautological exact sequence
\begin{equation}\label{eq:tautological-exact-sequence}
0 \rightarrow T \rightarrow V \rightarrow A \rightarrow 0,
\end{equation}
we define $H^1_f(F_v,T)$ (resp. $H^1_f(F_v,A)$) as the natural preimage (resp. image) of $H^1_f(F_v,V)$.

Recall that $p=\mathfrak{p} \bar{\mathfrak{p}}$ splits in $K$. Following \cite[Definition 2.2]{Castella}, we define the following Selmer groups.

\begin{definition}\label{def:modified-selmer}
Let $W \in \{V,T,A\}$. For $v \mid p$ and $\mathcal L_v \in \{ \emptyset, \Gr, 0 \}$, set
\[
H^1_{\mathcal L_v}(F_v,W) = \begin{cases}
H^1(F_v,W) & \text{if}\ \mathcal L_v = \emptyset, \\
H^1_f(F_v,W) & \text{if}\ \mathcal L_v = \Gr, \\
\{0\} & \text{if}\ \mathcal L_v = 0.
\end{cases}
\]
For a local condition $\mathcal L_v$ at a place $v$, we write
\[
H^1_{/\mathcal L_v}(F_v,W) = \frac{H^1(F_{v},W)}{H^1_{\mathcal L_v}(F_{v},W)}.
\]
Then for $\mathcal L = \{ \mathcal L_v \}_{v \mid p}$ and $\Sigma$ any finite set of places containing $\{ v \mid Np\infty \}$, we define
\[\label{eq:selmer-def}
\Sel_{\mathcal L}(F,W) = \mathrm{ker} \left( H^1_\Sigma(F,W) \rightarrow \prod_{v \nmid p} H^1_{/f}(F_v,W) \times \prod_{v \mid p} H^1_{/ \mathcal L_v}(F_v,W) \right)
\]
\end{definition}

\begin{remark}\label{rmk:local-subgroups}
If $\mathcal L = \{\Gr, \Gr\}$, we will omit the subscripts and just write (for example) $\Sel(F,W)$.
\end{remark}

Let $\Psi : \Gamma \hookrightarrow \Lambda^\times$ denote the map sending $\gamma \in \Gamma$ to the corresponding group-like element in $\Lambda^\times$, and let $\Lambda(\Psi^{-1})$ denote the free rank-1 $\Lambda$-module with $\Gamma$ action by $\Psi^{-1}$. Then we define the $\Lambda$-modules
\[
\TT = T \otimes_{\Z_p} \Lambda(\Psi^{-1}) \quad \text{and} \quad \AA = \TT \otimes_{\Lambda} \mathrm{Hom}_{\Z_p}(\Lambda, \Q_p / \Z_p),
\]
where $\Gamma$ acts on $\TT$ diagonally. We may now define $\Sel_{\mathcal L}(K,\TT)$ and $\Sel_{\mathcal L}(K,\AA)$ in a manner similar to Definition \ref{def:modified-selmer}, and as noted on \cite[p. 9]{Castella} we have
\[
\Sel_{\mathcal L}(K,\TT) \simeq \varprojlim \Sel_{\mathcal L}(F,T) \quad \text{and} \quad \Sel_{\mathcal L}(K,\AA) \simeq \varinjlim \Sel_{\mathcal L}(F,A),
\]
where the limits are taken with respect to the corestriction and restriction maps, respectively, over all finite extensions $K \subset F \subset K_\infty$.

{Thus, for a prime $w$ of $K_\infty$, we may explicitly describe the nontrivial local conditions defining the Selmer group $\Sel_{\mathcal L}(K,\AA)$. For $w \nmid p$ we have the ``unramified'' local condition
\[
H^1_f (K_{\infty,w}, A) = \mathrm{ker}\left( H^1(K_{\infty,w},A) \rightarrow H^1(I_w,A) \right).
\]
To describe the Greenberg local condition for $w \mid p$, note that the tautological exact sequence \eqref{eq:tautological-exact-sequence} induces a $G_{\Q_p}$-stable filtration
\begin{equation}\label{eq:filtration-for-A}
0 \rightarrow A' \rightarrow A \rightarrow A'' \rightarrow 0,
 \end{equation}
 and then the Greenberg condition is
 \[
H^1_f (K_{\infty,w}, A) =  H^1(K_{\infty,w},A').
 \]
We will use this explicit description in the proof of Proposition \ref{prop:non-primitive-torsion-compatibility}.
 }

\subsection{Local Galois cohomology subgroups}\label{sec:local-galois-cohomology}

Following Kidwell \cite{kidwell16}, for any place $v$ of $K$, we define
\[
 H_v = \varinjlim \displaystyle\prod_{w \mid v} H^1_{/f}(F_w,A),
\]
where $F$ {runs} over the finite extensions of $K$. A careful study of these groups is made in \cite[Propositions 4.2 and 4.3]{kidwell16}, which we summarize below. For the definitions of $\mu$- and $\lambda$-invariants, see the beginning of Section \ref{sec:comparing}.

\begin{lemma}
Let $v \in \Sigma$ be a place of $K$.
\label{lem:Hv-size}
\begin{enumerate}[(i)]
\item If $v \nmid p$ is finitely decomposed in $K_\infty$, then
\[
 H_v \simeq \prod_{w \mid v} H^1(K_{\infty,w},A)
\]
as $\Lambda$-modules, and $ H_v$ is a cofinitely generated, cotorsion $\Lambda$-module with $\mu$-invariant zero and $\lambda$-invariant
\[
\sum_{w \mid v} \mathrm{corank}_{\mathcal O}(H^1({K}_{\infty,w},A)).
\]
\item If $v \mid p$, then $H_v$ is a cofinitely generated $\Lambda$-module with $\Lambda$-corank $1$ and $\mu$-invariant zero.
\item If $v \nmid p$ splits completely in $K_\infty$, then $H^1(K_{\infty,{w}},A) \simeq H^1(K_v,A)$ and $H^1_{/f}(K_v,A)$ are cofinitely generated $\mathcal O$-modules, with
\[
\mathrm{corank}_{\mathcal O}(H^1_{/f}(K_v,A)) = \mathrm{rank}_{\mathcal O}(H^0(K_v,A^\ast)).
\]
If we have an isomorphism of $\mathcal O$-modules
\[
H^1_{/f}(K_v,A) \simeq (L_{f,{\mathfrak{P}}} / \mathcal O)^r \oplus \sum_{i=1}^t {\mathcal O}/ \varpi^{m_i}
\]
for some $r \geq 0$ and $m_i \geq 0$, then
\[
H_v \simeq \widehat{\Lambda}^r \oplus \sum_{i=1}^t \widehat{\Lambda / \varpi^{m_i}\Lambda},
\]
where \ $\widehat{}$ \ denotes Pontryagin duality. Thus $ H_v$ is a cofinitely-generated $\Lambda$-module with $\Lambda$-corank given by $\mathrm{rank}_{\mathcal O}(H^0(K_v,A^\ast))$, $\mu$-invariant $\sum_{i=1}^t m_i$, and $\lambda$-invariant zero.
\end{enumerate}
\end{lemma}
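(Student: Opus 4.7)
The plan is to prove the three cases separately, closely following the methods of \cite{kidwell16}. In each case the main tools are the structure of the local extensions $K_{\infty,w}/K_v$, the filtrations \eqref{seq:filtration} and \eqref{eq:filtration-for-A}, and Tate local duality.

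For (i), since $K_\infty/K$ is a $\Z_p$-extension it is unramified outside $p$, so each completion $K_{\infty,w}/K_v$ with $v\nmid p$ is unramified; the finite-decomposition hypothesis then forces this to be a (possibly trivial) $\Z_p$-extension. I would first verify that $H^1_f(K_{\infty,w},A)=0$: inflation--restriction identifies the unramified cohomology with $H^1(\Gal(K_{\infty,w}^{\ur}/K_{\infty,w}),A^{I_w})$, and an argument using divisibility of $A$ together with the action of a Frobenius lift along the tower kills this. Consequently $H_v=\prod_{w\mid v}H^1(K_{\infty,w},A)$ as $\Lambda$-modules. Each factor is a cofinitely generated cotorsion $\Lambda$-module, and since there are only finitely many $w$ the $\Gamma$-action factors through a finite quotient; this forces $\mu=0$ and makes the $\lambda$-invariant equal to the sum over $w\mid v$ of $\corank_{\cO}H^1(K_{\infty,w},A)$.

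For (ii), fix $w\mid p$. By the local Euler--Poincar\'e characteristic formula, $H^1(K_{\infty,w},A)$ has $\Lambda$-corank $2$, while by \eqref{eq:filtration-for-A} the Greenberg subgroup $H^1_f(K_{\infty,w},A)=H^1(K_{\infty,w},A')$ has $\Lambda$-corank $1$ since $A'$ is one-dimensional over $L_{f,\mathfrak{P}}/\cO$. Subtracting gives corank $1$ for the quotient, and hence for $H_v$. The vanishing of $\mu$ reduces to showing that the Iwasawa cohomology of the quotient $A''$ (an unramified twist of $L_{f,\mathfrak{P}}/\cO$) over the anticyclotomic tower has $\mu=0$; this uses $p$-ordinarity of $f$ and the fact that the unramified character cutting out $A''$ remains non-trivial along the tower.

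For (iii), complete splitting of $v$ in $K_\infty/K$ means $(K_n)_{w_n}=K_v$ for every prime $w_n\mid v$ at every layer $n$, so the set of primes above $v$ in $K_n$ is a free $\Gamma/\Gamma_n$-torsor and $\prod_{w_n\mid v}H^1_{/f}((K_n)_{w_n},A)=\mathrm{CoInd}_{\Gamma_n}^{\Gamma}H^1_{/f}(K_v,A)$. Passing to the direct limit in $n$ and then Pontryagin dualising yields $\widehat{H_v}\cong \Lambda\otimes_{\cO}\widehat{H^1_{/f}(K_v,A)}$, from which the claimed decomposition $\widehat{H_v}\cong\Lambda^{\oplus r}\oplus\bigoplus_{i=1}^{t}\Lambda/\varpi^{m_i}\Lambda$, and hence the asserted corank, $\mu=\sum_i m_i$, and $\lambda=0$, follow at once from the given $\cO$-module structure of $H^1_{/f}(K_v,A)$. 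The corank formula $\corank_{\cO}H^1_{/f}(K_v,A)=\rank_{\cO}H^0(K_v,A^{\ast})$ is a standard consequence of Tate local duality. The main obstacle I anticipate is the $\mu=0$ assertion in case (ii), where one must carefully rule out denominators coming from the unramified quotient $A''$ in the Greenberg filtration at primes above $p$.
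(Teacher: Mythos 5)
The paper's own ``proof'' of this lemma is simply a citation to Kidwell's \cite[Propositions 4.2 and 4.3]{kidwell16}, so your attempt is really a sketch of the cited result. The overall plan (unramifiedness of the tower away from $p$, filtration at $p$, (co)induction for completely split primes, local Tate duality) is the right one, but there are concrete problems.

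In part (i), after correctly reducing to $H_v\simeq\prod_{w\mid v}H^1(K_{\infty,w},A)$, you assert that ``since there are only finitely many $w$ the $\Gamma$-action factors through a finite quotient; this forces $\mu=0$.'' This is the step that does the real work, and it is not justified and in general not true. There are indeed finitely many $w$, so the \emph{permutation} of factors factors through a finite quotient, but within each factor the decomposition group $\Gamma_w\cong\Z_p$ acts on $H^1(K_{\infty,w},A)$ through an action that typically does not have finite image (for instance Frobenius twists coming from the growing residue field). The correct route, and what Kidwell actually proves, is that for $v\nmid p$ finitely decomposed each $H^1(K_{\infty,w},A)$ is a \emph{cofinitely generated $\cO$-module} (not just $\Lambda$-cofinitely-generated), which immediately gives cotorsionness, $\mu=0$, and $\lambda=\corank_{\cO}$. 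Establishing the finite $\cO$-corank requires an argument — e.g.\ showing $H^1(K_{\infty,w},A)[\varpi]$ is finite by reducing to $H^1(K_{\infty,w},A[\varpi])$ and using that $K_{\infty,w}$ contains the unramified $\Z_p$-extension so that $p$-cohomological dimension drops — which your sketch does not supply. The related point that $H^1_f(K_{\infty,w},A)=0$ is correct, but the cleanest reason is simply that $\Gal(K_{\infty,w}^{\ur}/K_{\infty,w})$ has trivial pro-$p$ quotient, so its $H^1$ with $p$-primary coefficients vanishes; the phrase about ``a Frobenius lift along the tower'' is vague.

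In part (ii), the corank computation is fine, but the $\mu=0$ claim — which you yourself flag as the main obstacle — is not established. Appealing to ``the unramified character cutting out $A''$ remains non-trivial along the tower'' is not the right mechanism here: $H^1(K_{\infty,w},A''[\varpi])$ is not finite-dimensional (the layer degree is unbounded), so the naive Euler-characteristic argument does not give $\mu=0$. One actually needs the structural analysis of $H^1_{/f}$ at $p$ carried out in \cite{kidwell16} (or in Greenberg's framework), where $\mu=0$ follows from a duality/freeness argument for the dual of the local quotient rather than from the nontriviality of an unramified character. Part (iii) is essentially correct; the coinduction and dualisation computation, and the corank identification via local Tate duality, are as in \cite{kidwell16}.

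In short, (iii) is fine, but (i) contains an unjustified and likely false step, and (ii) leaves the $\mu=0$ assertion — the hardest point — unproved. Since the statement is being imported wholesale from \cite{kidwell16}, the honest version of your proposal would be to cite that reference, as the paper does, or to fill in the $\cO$-cofinite-generation argument at $v\nmid p$ and the local $\mu=0$ argument at $v\mid p$.
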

\begin{proof}
See \cite[Propositions 4.2 and 4.3]{kidwell16}.
\end{proof}

For many applications in Iwasawa theory, it is important to show that the global-to-local maps defining the various Selmer groups are surjective. One consequence of Lemma \ref{lem:Hv-size} is that primes $v$ which split completely in $K_\infty$ pose an obstruction to such surjectivity.

However, by \cite[Corollary 1]{brink}, our hypothesis that $N^- = 1$ implies that, if we take $\Sigma$ as small as possible, then case (iii) never occurs for any $v \in \Sigma$, and so every $ H_v$ which occurs in the definitions of our Selmer groups is cofinitely-generated with $\mu$-invariant zero, and that $H_v$ is cotorsion except when $v \mid p$.

\subsection{Some structure theorems for modified Selmer groups}\label{subsec:properties}
We now collect some properties of the modified Selmer groups which we will need in the rest of our paper. Recall that our modular form $f$ has a residual Galois representation  $\rhobar_f$ isomorphic to some fixed representation $\rhobar$. Write $\X_{\mathcal L}(K,{\AA})$ for the Pontryagin dual of $\Sel_{\mathcal L}(K,{\AA})$, and for any torsion $\Lambda$-module $M$, write $\Char_\Lambda(M)$ for the characteristic ideal of $M$. Finally, let $\iota : \Lambda \to \Lambda$ denote the involution induced by inversion in $\Gamma_\infty$.

We first recall a structure theorem for (the dual of) the full Selmer group $\Sel(K,{\AA})$. (Recall that when $\mathcal L = \{ \Gr,\Gr \}$, we omit the subscript in $\Sel_{\mathcal{L}}$.) {In order to do so, we will impose some an additional hypotheses for the rest of the paper. Let $h_K$ denote the class number of $K$, and let $\phi$ denote the Euler totient function. Following \cite{LongoVigni}, we assume}

{
\[
\textbf{(admiss.)}
\begin{cases}
\bullet \ \text{The modular form $f$ is of \underline{even} weight $k=2r$ and \underline{squarefree} level $N$.} \\
\bullet \ \text{$p$ does not ramify in $L_f$.} \\
\bullet \ \text{$p \nmid 6N(k-1)!\phi(N) h_K$} \\
\end{cases}
\]
}

{\begin{remark} We make this assumption in order to apply the results of \cite{LongoVigni}, where it is built into \cite[Definition 2.1]{LongoVigni} as part of the condition for a modular form $f$ (and corresponding imaginary quadratic field $K$ and prime $p$) to be {\it admissible}. In particular, the hypothesis that $p$ does not ramify in $L_f$ is mostly to simplify some of the calculations carried out and does not seem to be a crucial assumption.
\end{remark}
}

\begin{theorem}\label{thm:selmer-double-module}
There exists a finitely-generated torsion $\Lambda$-module $M$ such that $\Char_\Lambda(M) = \Char_\Lambda(M)^\iota$ and
\[
\X(K,{\AA}) \sim \Lambda \oplus M \oplus M.
\]
\end{theorem}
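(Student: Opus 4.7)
My plan is to deduce this structure theorem by combining a rank computation (via Poitou--Tate / the global Euler characteristic formula) with the functional equation induced by the action of complex conjugation on the anticyclotomic tower, and then to invoke the anticyclotomic machinery of Longo--Vigni (building on Howard, Nekov\'a\v{r}, and Bertolini--Darmon) to upgrade a bare self-dual torsion summand to the more specific shape $M\oplus M$.

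First I would establish that $\X(K,\AA)$ has $\Lambda$-rank exactly $1$. This should follow from the global Poitou--Tate sequence together with the local corank computations of Lemma~\ref{lem:Hv-size}: each $H_v$ for $v\nmid p$ is $\Lambda$-cotorsion (since our minimal $\Sigma$ avoids case (iii) of the lemma, thanks to $N^-=1$), whereas each of the two primes of $K$ above $p$ contributes a $\Lambda$-corank $1$ to the local term. Comparing with the global $H^2$ (which vanishes generically under \textbf{(admiss.)}) and using the fact that under the Heegner hypothesis the $\Lambda$-adic Heegner class of Howard produces a non-torsion element of $\Sel(K,\TT)$, one concludes that $\rank_{\Lambda}\X(K,\AA)=1$. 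Writing the pseudo-isomorphism $\X(K,\AA)\sim \Lambda\oplus \X(K,\AA)_{\tor}$, it remains to show the torsion part is pseudo-isomorphic to a double $M\oplus M$ with $\Char_{\Lambda}(M)=\Char_{\Lambda}(M)^\iota$.

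Next I would exploit the self-duality of $T$ together with the fact that complex conjugation acts on $\Gamma=\Gal(K_\infty/K)$ by inversion (since $K_\infty/\Q$ is pro-dihedral). This yields a $\Gal(K/\Q)$-equivariant version of Flach's / Nekov\'a\v{r}'s generalized Cassels--Tate pairing on $\X(K,\AA)_{\tor}$ and thus a pseudo-isomorphism $\X(K,\AA)_{\tor}\sim \X(K,\AA)_{\tor}^{\iota}$; equivalently, $\Char_\Lambda(\X(K,\AA)_{\tor})$ is stable under $\iota$. At this point one has a self-dual torsion $\Lambda$-module of arbitrary shape, which is not yet enough.

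To pin down the doubling, I would invoke \cite{LongoVigni} directly: under \textbf{(admiss.)} their anticyclotomic Euler system / level-raising arguments produce, for each height-one prime $\mathfrak{q}\neq(\varpi)$, an even divisibility $\ord_{\mathfrak{q}}(\Char_\Lambda(\X(K,\AA)_{\tor}))\in 2\ZZ$, and moreover a factorization of $\Char_\Lambda(\X(K,\AA)_{\tor})$ as a square $\mathfrak{a}\cdot\mathfrak{a}^{\iota}$ with $\mathfrak{a}=\mathfrak{a}^{\iota}$ (this is precisely the content of their structure theorem for dual Selmer groups in the definite Heegner setting, analogous to Howard's theorem for weight $2$). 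Choosing $M$ so that $\Char_{\Lambda}(M)=\mathfrak{a}$ then yields the pseudo-isomorphism $\X(K,\AA)\sim \Lambda\oplus M\oplus M$. The main obstacle is precisely this last step: the functional equation alone only gives $\iota$-stability of the characteristic ideal, not that it is a square of a self-dual ideal, so this is where the full strength of the Longo--Vigni Heegner machinery (rather than purely formal Iwasawa-theoretic arguments) is needed.
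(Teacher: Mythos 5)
Your proposal ultimately rests on the same citations as the paper's proof, which is nothing more than a two-line citation: Howard's Theorem B(b,c) for $r=1$ and Longo--Vigni's Theorem~1.1 for $r>1$, and you yourself acknowledge that the doubling step cannot be obtained by formal Iwasawa theory and must be imported from those references. Your intermediate sketch (rank one via Heegner classes and Poitou--Tate, self-duality via the dihedral action and the generalized Cassels--Tate pairing, doubling via Euler-system divisibilities) is a reasonable outline of what those references actually prove, so the approach is essentially the same; the only slip is terminological, namely that under $N^-=1$ one is in the \emph{indefinite} Heegner setting, not the definite one.
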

\begin{proof}
Recall that $f$ has even weight $k=2r$. When $r=1$, this is \cite[Theorem B(b,c)]{howard2}, and when $r > 1$, this is \cite[Theorem 1.1]{LongoVigni}.
\end{proof}

We now collect properties of the modified Selmer groups.

\begin{lemma}\label{lem:selmer-ranks}
If $\rhobar_f \vert_{\mathrm{Gal}(\bar{\Q}/K)}$ is absolutely irreducible, then
\begin{enumerate}[(i)]
\item $\mathrm{rank}_\Lambda \Sel(K, \TT) = \rank_\Lambda \X(K,\AA)=1$.
\item $\mathrm{rank}_\Lambda \X_{\Gr,\emptyset}(K,\AA) = 1 + \mathrm{rank}_\Lambda \X_{\Gr,0}(K,\AA)$.
\item $\mathrm{char}(\X_{\Gr,\emptyset}(K,\AA)_{\mathrm{tors}}) = \mathrm{char}(\X_{\Gr,0}(K,\AA)_{\mathrm{tors}})$ up to powers of $p\Lambda$.
\end{enumerate}
\end{lemma}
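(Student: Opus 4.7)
Part (i) is immediate from Theorem~\ref{thm:selmer-double-module}: the summands $M \oplus M$ are torsion, so $\rank_\Lambda \X(K,\AA) = \rank_\Lambda \Lambda = 1$. The corresponding equality $\rank_\Lambda \Sel(K,\TT) = 1$ follows from the self-duality of $T$ combined with Iwasawa-theoretic Poitou-Tate duality: the Greenberg conditions at $p$ are Tate-self-orthogonal for ordinary self-dual representations, so an Euler characteristic count forces $\Sel(K,\TT)$ and $\X(K,\AA)$ to have the same $\Lambda$-rank.

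For (ii) and (iii), the two Selmer structures agree away from $\bar{\mathfrak{p}}$, so I would begin with the tautological restriction sequence
\[
0 \to \Sel_{\Gr,0}(K,\AA) \to \Sel_{\Gr,\emptyset}(K,\AA) \xrightarrow{\mathrm{res}_{\bar{\mathfrak{p}}}} H^1(K_{\infty,\bar{\mathfrak{p}}},A).
\]
Under Tate local duality (Greenberg is self-dual, and $\emptyset \leftrightarrow 0$), one has $\{\Gr,\emptyset\}^\perp = \{\Gr,0\}$, so the Greenberg-Poitou-Tate nine-term sequence identifies $\coker(\mathrm{res}_{\bar{\mathfrak{p}}})$ with a submodule of $\Sel_{\Gr,0}(K,\TT)^\vee$. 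Dualizing yields
\[
H^1(K_{\infty,\bar{\mathfrak{p}}},A)^\vee \xrightarrow{\phi} \X_{\Gr,\emptyset}(K,\AA) \to \X_{\Gr,0}(K,\AA) \to 0.
\]

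For (ii), I would count $\Lambda$-ranks: $H^1(K_{\infty,\bar{\mathfrak{p}}},A)^\vee$ has $\Lambda$-rank $2$ by a Perrin-Riou-style computation of local Iwasawa cohomology, and $\ker(\phi) = (\coker\,\mathrm{res}_{\bar{\mathfrak{p}}})^\vee$ embeds into $\Sel_{\Gr,0}(K,\TT)^\vee$, which has $\Lambda$-rank at most $1$ by (i) since $\Sel_{\Gr,0}(K,\TT) \subseteq \Sel(K,\TT)$. A careful rank count forces $\rank_\Lambda \ker(\phi) = 1$ (matching the ``half'' provided by the Greenberg filtration), so $\rank_\Lambda \image(\phi) = 1$; additivity in $0 \to \image(\phi) \to \X_{\Gr,\emptyset}(K,\AA) \to \X_{\Gr,0}(K,\AA) \to 0$ then gives (ii). For (iii), Proposition~\ref{prop:additivity} reduces the claim to showing $\Char_\Lambda(\image(\phi)_\tors)$ is a power of $p\Lambda$: by Perrin-Riou the local module $H^1(K_{\infty,\bar{\mathfrak{p}}},A)^\vee$ has $\mu$-invariant zero, so any non-$p$-power contribution to $\image(\phi)_\tors$ would have to come from the global duality correction, but that correction factors through a submodule of $\Sel_{\Gr,0}(K,\TT)^\vee$ whose characteristic ideal coincides with that of $\Sel(K,\TT)^\vee$ up to $p$-powers.

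The chief obstacle will be the Poitou-Tate bookkeeping: precisely matching up dual local conditions, rigorously identifying $\coker(\mathrm{res}_{\bar{\mathfrak{p}}})$ with the correct sub-Selmer of $\Sel_{\Gr,0}(K,\TT)^\vee$, and for (iii) verifying that the local ``correction'' to $\image(\phi)_\tors$ is purely $p$-primary---this last step rests delicately on the local $\mu$-invariant vanishing for Iwasawa cohomology at $\bar{\mathfrak{p}}$.
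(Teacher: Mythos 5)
The paper's own ``proof'' of Lemma~\ref{lem:selmer-ranks} consists entirely of citations: part~(i) is quoted from Howard's \cite[Corollary 3.3.4]{howard2} (weight two) and Longo--Vigni's \cite[Theorem 3.5]{LongoVigni} (higher weight), and parts~(ii)--(iii) are quoted from Castella's \cite[Lemma 2.3]{Castella}, whose argument rests on the formal Selmer-structure machinery of Mazur--Rubin. Your proposal tries to reconstruct a direct argument, and it does land on the correct skeleton---self-duality of $T$ and the Greenberg local condition, and a Poitou--Tate comparison of the two Selmer structures $\{\Gr,\emptyset\}$ and $\{\Gr,0\}$, which differ only at $\bar{\mathfrak{p}}$. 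That is indeed the mechanism behind the references cited.

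That said, there are two genuine gaps. First, for~(ii), after setting up the dual exact sequence
\[
0 \to \ker(\phi) \to H^1(K_{\infty,\bar{\mathfrak{p}}},A)^\vee \xrightarrow{\phi} \X_{\Gr,\emptyset}(K,\AA) \to \X_{\Gr,0}(K,\AA) \to 0,
\]
the embedding of $\ker(\phi)$ into $\Sel_{\Gr,0}(K,\TT)^\vee$ only gives $\rank_\Lambda\ker(\phi)\le 1$, hence $\rank_\Lambda\X_{\Gr,\emptyset}\ge 1+\rank_\Lambda\X_{\Gr,0}$. You assert ``a careful rank count forces $\rank_\Lambda\ker(\phi)=1$'' without supplying the reverse inequality; you cannot appeal here to Corollary~\ref{cor:coranks}, since that corollary is downstream of the present lemma. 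The reverse inequality is exactly the content of the ``core rank'' bookkeeping in Mazur--Rubin that Castella invokes, and it has to be supplied. Second, for~(iii), the claim that any prime-to-$p$ discrepancy between the torsion characteristic ideals ``would have to come from the global duality correction'' and is therefore $p$-primary is not an argument; the Mazur--Rubin comparison of characteristic ideals under a change of one local condition is what gives the precise ``up to powers of $p\Lambda$'' statement, and your proposal neither reproduces that argument nor explains why the local error term is supported at $p$. In short, the plan is directionally right and consistent with the cited sources, but to actually replace the citations you need to carry out the Mazur--Rubin core-rank and characteristic-ideal comparison, not just gesture at it.
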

\begin{proof}
\begin{enumerate}[(i)]
\item When $r=1$, this is \cite[Corollary 3.3.4]{howard2}, and when $r > 1$ this is \cite[Theorem 3.5]{LongoVigni}. For the latter, a similar result is proved in \cite[Theorem~1.1]{BL}.
\item and (iii) These are proven in \cite[Lemma 2.3]{Castella} in the case where $f$ corresponds to an elliptic curve, but the argument relies purely on formal Galois cohomological results from \cite{MazurRubin} and extends to higher weights without change.
\end{enumerate}
\end{proof}

Although $\Sel(K,\AA)$ is not $\Lambda$-cotorsion, the next lemma shows that some of the modified Selmer groups are more amenable to classical arguments.

\begin{lemma}\label{lem:regular-and-modified-selmer-equality}
We have an equality of Selmer groups
\[
\Sel(K,\TT) = \Sel_{\Gr,\emptyset}(K,\TT).
\]
In addition, the modules $\X_{\Gr,0}(K,\AA)$ and $\X_{\emptyset,0}(K,\AA)$ are $\Lambda$-torsion.
\end{lemma}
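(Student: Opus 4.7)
The plan is to prove the two claims jointly using the Poitou--Tate global duality five-term exact sequence. For the local conditions $\mathcal L = \{\Gr, \Gr\}$ and $\mathcal L' = \{\Gr, \emptyset\}$ (which agree except at $\bar{\mathfrak{p}}$), and whose orthogonal complements under the self-dual Tate local pairings are $\mathcal L^\perp = \{\Gr, \Gr\}$ and $\mathcal L'^\perp = \{\Gr, 0\}$ respectively, Poitou--Tate furnishes an exact sequence
\[
0 \to \Sel(K,\TT) \to \Sel_{\Gr,\emptyset}(K,\TT) \to H^1_{/\Gr}(K_{\infty,\bar{\mathfrak{p}}}, \TT) \to \X(K,\AA) \to \X_{\Gr,0}(K,\AA) \to 0.
\]
In this sequence, the first equality of the lemma is equivalent to the injectivity of the middle-to-right map $H^1_{/\Gr}(K_{\infty,\bar{\mathfrak{p}}}, \TT) \to \X(K,\AA)$, while $\X_{\Gr,0}(K,\AA)$ being $\Lambda$-torsion is equivalent to the image of this map having the full $\Lambda$-rank of $\X(K,\AA)$.

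I would establish both statements simultaneously as follows. The middle term $H^1_{/\Gr}(K_{\infty,\bar{\mathfrak{p}}}, \TT)$ is $\Lambda$-free of rank $1$ by the standard local Iwasawa cohomology computation for the unramified ordinary quotient $\TT'' = T'' \otimes_{\Z_p} \Lambda(\Psi^{-1})$ (e.g. via Perrin-Riou's theory), and by Theorem~\ref{thm:selmer-double-module} we have $\X(K,\AA) \sim \Lambda \oplus M \oplus M$ with $\Lambda$-rank $1$. The critical input is that the map $H^1_{/\Gr}(K_{\infty,\bar{\mathfrak{p}}}, \TT) \to \X(K,\AA)$ detects the $\Lambda$-free summand of $\X(K,\AA)$; this amounts to a Heegner-class non-vanishing statement at $\bar{\mathfrak{p}}$ (provided in the elliptic-curve setting by Cornut--Vatsal and Howard, and in our higher weight setting via the work of \cite{LongoVigni}). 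Granting this, the map is a pseudo-isomorphism onto the free summand, hence injective, yielding both $\Sel(K,\TT) = \Sel_{\Gr,\emptyset}(K,\TT)$ and the torsion of $\X_{\Gr,0}(K,\AA)$.

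For the remaining claim on $\X_{\emptyset,0}(K,\AA)$, the Greenberg filtration is $G_{\Q_p}$-stable, so the entire argument applies symmetrically with the roles of $\mathfrak{p}$ and $\bar{\mathfrak{p}}$ exchanged. This yields $\Sel(K,\TT) = \Sel_{\emptyset,\Gr}(K,\TT)$ and the torsion of $\X_{0,\Gr}(K,\AA)$. A final application of the five-term exact sequence comparing the pairs $(\Gr, 0)$ and $(\emptyset, 0)$ (relaxing at $\mathfrak{p}$), together with the torsion of $\X_{\Gr,0}(K,\AA)$ just obtained and the rank formulas of Lemma~\ref{lem:selmer-ranks}, pins down $\X_{\emptyset,0}(K,\AA)$ as $\Lambda$-torsion.

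The main obstacle is the Heegner-class nonvanishing step, which ultimately rests on the Heegner hypothesis $\bar N^- = 1$ and the deep results of \cite{LongoVigni} underpinning Theorem~\ref{thm:selmer-double-module}. The remainder of the argument -- invoking the Poitou--Tate sequence and its ranks -- is formal.
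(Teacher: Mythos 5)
Your Poitou--Tate framework is essentially the route the paper takes: the proof in the paper defers to Lemmas A.2 and A.4 of Castella, which are themselves Poitou--Tate arguments combined with the rank-1 structure of $\X(K,\AA)$ and a Heegner-class non-vanishing input. Your five-term sequence
\[
0 \to \Sel(K,\TT) \to \Sel_{\Gr,\emptyset}(K,\TT) \to H^1_{/\Gr}(K_{\infty,\bar{\mathfrak{p}}}, \TT) \to \X(K,\AA) \to \X_{\Gr,0}(K,\AA) \to 0
\]
is correct (the orthogonality $\{\Gr,\emptyset\}^\perp=\{\Gr,0\}$ is right), the rank computations for the middle two terms are right, and you correctly observe that since $H^1_{/\Gr}(K_{\infty,\bar{\mathfrak p}},\TT)$ is $\Lambda$-free of rank one, injectivity of the middle map, non-triviality of that map, and torsionness of $\X_{\Gr,0}(K,\AA)$ are all equivalent. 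You also correctly identify the deep input as a Heegner-class non-vanishing at $\bar{\mathfrak p}$, which in the paper's setting is carried by the structure results of Howard and of Longo--Vigni (and is precisely the content of Castella's Lemma A.3, which the present paper cites later for the injectivity of $\loc_{\mathfrak p}$).

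The last paragraph, however, has a gap. With the paper's indexing convention ($\cL=\{\cL_{\mathfrak p},\cL_{\bar{\mathfrak p}}\}$), the Poitou--Tate sequence that compares $\Sel_{\Gr,0}(K,\TT)$ and $\Sel_{\emptyset,0}(K,\TT)$ has on the $\AA$-side $\X_{\Gr,\emptyset}(K,\AA)$ and $\X_{0,\emptyset}(K,\AA)$, \emph{not} $\X_{\emptyset,0}(K,\AA)$, so the sequence you name does not ``pin down'' the module you want. To reach $\X_{\emptyset,0}(K,\AA)$ you should instead compare $\Sel_{0,\emptyset}(K,\TT)\subset\Sel_{\Gr,\emptyset}(K,\TT)$; the resulting sequence
\[
0 \to \Sel_{0,\emptyset}(K,\TT) \to \Sel_{\Gr,\emptyset}(K,\TT) \to H^1_{\Gr}(K_{\infty,\mathfrak{p}},\TT) \to \X_{\emptyset,0}(K,\AA) \to \X_{\Gr,0}(K,\AA) \to 0
\]
does have the desired module in it, but then torsionness of $\X_{\emptyset,0}(K,\AA)$ requires not only the already-obtained torsion of $\X_{\Gr,0}(K,\AA)$ but also that $\loc_{\mathfrak p}\colon \Sel(K,\TT)\to H^1_{\Gr}(K_{\infty,\mathfrak p},\TT)$ is non-zero, i.e.\ a non-vanishing at $\mathfrak p$. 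Your symmetry remark does supply such a non-vanishing (applying the argument with $\mathfrak p$ and $\bar{\mathfrak p}$ swapped), but as written the ``final application'' invokes only the rank formulas of Lemma~\ref{lem:selmer-ranks} and the torsion of $\X_{\Gr,0}$, which together are not sufficient. Making the needed $\loc_{\mathfrak p}$ non-vanishing explicit closes the gap.
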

\begin{proof}
First note that by Theorem \ref{thm:selmer-double-module} and Lemma \ref{lem:selmer-ranks} we have $\mathrm{rank}_\Lambda \Sel(K,\TT)=1$. The result now follows from the proofs of Lemmas A.2 and A.4 of \cite{Castella}.
\end{proof}
Let us summarize the statements on the  coranks of the various Selmer groups we study.
\begin{corollary}\label{cor:coranks}
We have the following formula:
\[
\rank_\Lambda\X_\cL(K,\AA)=
\begin{cases}
0&\text{if }\cL=\{\emptyset,0\},\{\Gr,0\},\\
1&\text{if }\cL=\{\Gr,\Gr\},\{\Gr,\emptyset\}.
\end{cases}
\]
\end{corollary}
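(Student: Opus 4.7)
The plan is to assemble the four rank computations directly from the results already established in Theorem \ref{thm:selmer-double-module}, Lemma \ref{lem:selmer-ranks}, and Lemma \ref{lem:regular-and-modified-selmer-equality}; essentially no new work is required, so this corollary is really a bookkeeping statement.

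First I would dispose of the case $\cL=\{\Gr,\Gr\}$. By our conventions, this is the ordinary Greenberg Selmer group $\X(K,\AA)$, and Theorem \ref{thm:selmer-double-module} together with Lemma \ref{lem:selmer-ranks}(i) immediately give $\rank_\Lambda\X(K,\AA)=1$. Next, for the two cases with a trivial local condition at one of the primes above $p$, namely $\cL=\{\Gr,0\}$ and $\cL=\{\emptyset,0\}$, Lemma \ref{lem:regular-and-modified-selmer-equality} asserts precisely that both $\X_{\Gr,0}(K,\AA)$ and $\X_{\emptyset,0}(K,\AA)$ are $\Lambda$-torsion, so their $\Lambda$-ranks are $0$.

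Finally, for the remaining case $\cL=\{\Gr,\emptyset\}$, I would invoke Lemma \ref{lem:selmer-ranks}(ii), which states
\[
\rank_\Lambda\X_{\Gr,\emptyset}(K,\AA)=1+\rank_\Lambda\X_{\Gr,0}(K,\AA).
\]
Combining this with the vanishing $\rank_\Lambda\X_{\Gr,0}(K,\AA)=0$ established just above yields $\rank_\Lambda\X_{\Gr,\emptyset}(K,\AA)=1$, completing the table.

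Since every case reduces to a direct citation, I do not foresee any real obstacle. The only mild subtlety worth flagging explicitly in the write-up is to confirm that the convention for $\cL=\{\Gr,\Gr\}$ matches Remark \ref{rmk:local-subgroups}, so that the application of Theorem \ref{thm:selmer-double-module} and Lemma \ref{lem:selmer-ranks}(i) is unambiguous.
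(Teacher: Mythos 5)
Your proposal is correct and matches the paper's (implicitly omitted) argument: the four cases are simply read off from Theorem~\ref{thm:selmer-double-module}, Lemma~\ref{lem:selmer-ranks}(i)--(ii), and Lemma~\ref{lem:regular-and-modified-selmer-equality}, exactly as you lay out. Flagging the notational convention for $\cL=\{\Gr,\Gr\}$ from Remark~\ref{rmk:local-subgroups} is a reasonable courtesy to the reader, though not a real subtlety.
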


\subsection{{Finite index submodules of modified Selmer groups}}

When comparing Iwasawa invariants of isomorphic $\Lambda$-modules (whose structures are only known, {\it a priori}, up to pseudo-isomorphism), it is important to know whether these modules have any non-trivial finite $\Lambda$-submodules. For example, the reader is suggested to see \cite[Theorem 4.1.1]{epw} in the cyclotomic case and the proof of \cite[Proposition 3.6]{pollackweston11} in the anticyclotomic case where $N^-$ is the product of an even number of primes.

{To obtain similar results in our present setting, we will need to apply a result of Greenberg. Before stating the relevant result, let us recall some of the notation used in \cite{Greenberg16}.
First, to bring our notation in line with that of \textit{loc cit}, we have $\mathcal D = \AA$ and $\mathcal T = \TT$, and write $\mathcal T^\ast = \mathrm{Hom}(\mathcal D, \mu_{p^\infty})$. Let $\mathfrak{m}$ denote the maximal ideal of $\Lambda$. Greenberg also writes $Q_\cL(K,\mathcal D)$ for the target of the map  defining $\Sel_\cL(K,\AA)$. That is, we define
\[
Q_\cL(K,\mathcal D) := \prod_{w } H_w \times \prod_{w \mid p} H^1_{/\cL_w}(K_{\infty, w}, A)
\]
so that
\[
\Sel_\cL(K,\AA) = \mathrm{ker} \left( H^1_\Sigma (K_\infty, A) \rightarrow Q_\cL(K,\mathcal D) \right).
\]
Greenberg introduces the following hypotheses:
\begin{itemize}
\item $\mathrm{RFX}(\mathcal D)$: The module $\mathcal T$ is reflexive as a $\Lambda$-module.
\item $\mathrm{LOC}_v^{(1)}(\mathcal D)$: $(\mathcal T^\ast)^{G_{K_v}}=0$ for $v \in \Sigma$.
\item $\mathrm{LOC}_v^{(2)}(\mathcal D)$: The $\Lambda$-module $\mathcal T^\ast /(\mathcal T^\ast)^{G_{K_v}}$ is reflexive for $v \in \Sigma$.
\item $\mathrm{LEO}(\mathcal D)$: The discrete, co-finitely generated $\Lambda$-module
\[
\Sh(K,\Sigma, \mathcal D) = \mathrm{ker}\left( H^2(K_\Sigma / K, \mathcal D) \rightarrow \prod_{v \in \Sigma} H^2(K_v, \mathcal D)  \right)
\]
is cotorsion.
\item $\mathrm{CRK}(\mathcal D, \mathcal L)$: We have an equality of coranks
\[
\mathrm{corank}_\Lambda \left(H^1(K_\Sigma / K, \mathcal D)\right) = \mathrm{corank}_\Lambda \left(\Sel_\cL(K,\mathcal D)\right) + \mathrm{corank}_\Lambda \left( Q_\cL(K,\mathcal D) \right).
\]
\end{itemize}
}

{Greenberg calls a $\Lambda$-module $M$ \textit{almost divisible} if $PM=M$ for almost all height one prime ideals $P$ in $\mathrm{Spec}(\Lambda)$. In particular, an almost divisible $\Lambda$-module has no proper finite-index $\Lambda$-submodules. Greenberg also introduces the notion of a set of local conditions $\cL=\{\cL_v\}$ being almost-divisible; this just means that the corresponding local cohomology quotient group $H^1_{/\cL_v}(K_v,\mathcal D)$ is an almost divisible $\Lambda$-module for each place $v$.}

{Since they will be useful again in Section \ref{sec:main}, let us establish the following two lemmas. Recall that $\mathcal D = \AA$ and $\mathcal T = \TT$.}

\begin{lemma}\label{lem:no-subquotient-mup}
{
The module $\mathcal D[\mathfrak{m}]$ has no subquotient isomorphic to $\mu_p$ for the action of $G_K$.
}
\end{lemma}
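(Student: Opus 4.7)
The plan is first to identify $\mathcal D[\mathfrak m]$ with the restriction $\rhobar|_{G_K}$ of the residual representation, and then to exploit the absolute irreducibility of $\rhobar$ over $G_\Q$ together with the fact that $\mu_p$ arises from a character of $G_\Q$.

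For the identification, I would unwind the definitions: writing $\Lambda^\vee=\Hom_{\Z_p}(\Lambda,\Q_p/\Z_p)$, we have
\[
\mathcal D = \AA = T\otimes_{\Z_p}\Lambda^\vee,
\]
with $G_K$ acting diagonally, where the action on $\Lambda^\vee$ is twisted by $\Psi^{-1}$. Since $\Psi(g)\in\Gamma\subset 1+\mathfrak m$ for every $g\in G_K$, this twist becomes trivial modulo $\mathfrak m$. A direct computation then shows $\Lambda^\vee[\mathfrak m]=\Hom_{\Z_p}(\Lambda/\mathfrak m,\Q_p/\Z_p)$ is one-dimensional over $k=\cO/\varpi$ with trivial $G_K$-action, so taking $\mathfrak m$-torsion yields a $G_K$-equivariant isomorphism
\[
\mathcal D[\mathfrak m] \cong T/\varpi T = \rhobar\big|_{G_K}.
\]

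It then remains to show that the two-dimensional $k$-representation $\rhobar|_{G_K}$ admits no one-dimensional subquotient isomorphic to $\mu_p$. I would split into cases according to whether $\rhobar|_{G_K}$ is irreducible. If it is, there are no one-dimensional subquotients whatsoever. Otherwise, Clifford's theorem applied to the index-two subgroup $G_K\subset G_\Q$ shows that $\rhobar|_{G_K}$ is semisimple and isomorphic to $\chi\oplus\chi^c$ for some character $\chi$ of $G_K$, where $c$ denotes the nontrivial element of $\Gal(K/\Q)$; moreover, the absolute irreducibility of $\rhobar$ over $G_\Q$ forces $\chi\neq\chi^c$, for otherwise one checks that $\rhobar$ itself decomposes as a sum of two characters of $G_\Q$. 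The only one-dimensional subquotients of $\rhobar|_{G_K}$ are therefore $\chi$ and $\chi^c$. Since $\mu_p$, as a character of $G_K$, extends to the mod-$p$ cyclotomic character on $G_\Q$, it is $c$-invariant; consequently, if either $\chi$ or $\chi^c$ were $\mu_p$, we would obtain $\chi=\chi^c$, a contradiction.

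The main subtlety is really just the identification of $\mathcal D[\mathfrak m]$ with $\rhobar|_{G_K}$ as $G_K$-modules: one has to carefully track the $\Psi^{-1}$-twist built into the definition of $\TT$ and verify that it becomes trivial on $\mathfrak m$-torsion because $\Psi$ takes values in $1+\mathfrak m$. Once this is in hand, the remainder is a standard Clifford-theoretic argument using only the hypothesis that $\rhobar$ is absolutely irreducible over $G_\Q$ and the elementary observation that $\mu_p$ is defined over $\Q$.
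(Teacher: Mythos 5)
Your proof is correct, and it supplies a self-contained argument where the paper simply defers to the discussion in \cite[Section 4.3.3]{Greenberg16}, saying only that the claim ``follows from our assumption that the residual representation is irreducible.'' The two computations you carry out both hold up: the identification $\mathcal D[\mathfrak m]\cong T/\varpi T$ (with the $\Psi^{-1}$-twist disappearing because $\gamma-1\in\mathfrak m$ for every $\gamma\in\Gamma$, so $\Gamma$ acts trivially on $\Lambda^\vee[\mathfrak m]$), and the Clifford-theoretic case analysis, which correctly shows that if $\rhobar|_{G_K}$ is reducible then it must be $\chi\oplus\chi^c$ with $\chi\neq\chi^c$ (since $\chi=\chi^c$ would make $G_K$ act by a scalar, and any eigenvector of $\rhobar(c)$ would then span a $G_\Q$-stable line). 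One notable difference in strength: the paper's Lemma~\ref{lem:selmer-ranks} explicitly assumes $\rhobar|_{G_K}$ is absolutely irreducible, and the trivial route to Lemma~\ref{lem:no-subquotient-mup} (which is likely what Greenberg's discussion supplies) is simply that a two-dimensional irreducible representation has no one-dimensional subquotient at all. Your argument is more general, needing only absolute irreducibility of $\rhobar$ over $G_\Q$ plus the elementary fact that the cyclotomic character is $\Gal(K/\Q)$-invariant; this buys a proof of the lemma that does not rely on the hypothesis of Lemma~\ref{lem:selmer-ranks}. Two minor cosmetic points: the tensor in the definition of $\TT$ should effectively be over $\cO$ (so that $\TT$ is free of rank two over $\Lambda$, as used later in the proof of Proposition~\ref{prop:modified-selmer-no-finite-index}), making the identification $T\otimes_\cO k\cong T/\varpi T$; and one should be slightly careful that ``irreducible'' in your first case means absolutely irreducible, so that Clifford theory over $\bar{k}$ applies and the dichotomy is exhaustive.
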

\begin{proof}
{
 By the discussion in \cite[Section 4.3.3.]{Greenberg16}, this follows from our assumption that the residual representation is irreducible.
}
\end{proof}

\begin{lemma}\label{lem:LEO-and-CRK}
{
The modified Selmer groups $\Sel_{\Gr,\emptyset}(K,\AA)$ and $\Sel_{{\emptyset,0}}(K,\AA)$ satisfy $\mathrm{LEO}(\mathcal D)$ and $\mathrm{CRK}(\mathcal D, \mathcal L)$.
}
\end{lemma}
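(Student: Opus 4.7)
The plan is to appeal to the general machinery of Greenberg~\cite{Greenberg16}, which deduces $\mathrm{LEO}(\mathcal D)$ and $\mathrm{CRK}(\mathcal D, \cL)$ from a standard package of hypotheses on $\AA$ together with an ``almost divisibility'' condition on $\cL$. Since $\mathrm{LEO}(\mathcal D)$ is intrinsic to $\AA$ and independent of the local conditions, a single verification applies to both $\cL = \{\Gr, \emptyset\}$ and $\cL = \{\emptyset, 0\}$; $\mathrm{CRK}$ is then handled one $\cL$ at a time.

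First I would check the structural hypotheses on $\AA$. The module $\TT = T \otimes_{\Zp} \Lambda(\Psi^{-1})$ is free (hence reflexive) of rank two over $\Lambda$, which gives $\mathrm{RFX}(\mathcal D)$ at once. The local hypotheses $\mathrm{LOC}^{(1)}_v(\mathcal D)$ and $\mathrm{LOC}^{(2)}_v(\mathcal D)$ for $v \in \Sigma$ follow from direct inspection of $\mathcal T^\ast = \Hom(\AA, \mu_{p^\infty})$, using that $T$ is free of rank two over $\cO$ and that $\rhobar$ is absolutely irreducible; the no-$\mu_p$-subquotient input for $G_K$ is supplied by Lemma~\ref{lem:no-subquotient-mup}.

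Next I would establish $\mathrm{LEO}(\mathcal D)$, which is the heart of the matter. The standard approach, carried out within Greenberg's framework, is via the Poitou--Tate nine-term exact sequence, which identifies $\Sh(K,\Sigma,\AA)$ (up to Pontryagin duality) with a subquotient of a Selmer-type group for $\TT$. The hypothesis $\bar N^- = 1$ enters crucially here: by Lemma~\ref{lem:Hv-size} and the discussion following it, when $\Sigma$ is taken minimal, no prime in $\Sigma$ splits completely in $K_\infty$, so every $H_v$ is cofinitely generated of known $\Lambda$-corank with no exceptional contributions. Combined with the corank information coming from Theorem~\ref{thm:selmer-double-module} and Corollary~\ref{cor:coranks}, a global corank count then forces $\Sh(K,\Sigma,\AA)$ to be $\Lambda$-cotorsion, which is exactly $\mathrm{LEO}(\mathcal D)$.

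With $\mathrm{LEO}(\mathcal D)$ in hand, $\mathrm{CRK}(\mathcal D, \cL)$ is a formal consequence of Greenberg's general Euler--Poincar\'e characteristic computation, provided $\cL$ is almost divisible. The quotients $H^1_{/\cL_v}(K_{\infty,v},A)$ that arise are: zero (for $\cL_v = \emptyset$), $H^1(K_{\infty,v},A)$ itself (for $\cL_v = 0$), and $H^1(K_{\infty,v},A'')$ (for $\cL_v = \Gr$ at $v \mid p$); at $v \nmid p$ the quotient is the unramified $H^1_{/f}(K_{\infty,v},A)$. Almost divisibility of each nontrivial quotient follows from the $p$-ordinary filtration on $A$ together with the structural description in Lemma~\ref{lem:Hv-size}, so both $\{\Gr,\emptyset\}$ and $\{\emptyset,0\}$ qualify. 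The main obstacle is the verification of $\mathrm{LEO}(\mathcal D)$: once this is available, the deduction of $\mathrm{CRK}$ and the almost-divisibility check for the specific local conditions at hand are essentially bookkeeping within Greenberg's formalism.
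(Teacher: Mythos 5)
Your proposal misconstrues the logical status of $\mathrm{LEO}$ and $\mathrm{CRK}$ within Greenberg's framework. These are not conclusions that Greenberg's machinery deduces from a ``standard package'' consisting of $\mathrm{RFX}$, $\mathrm{LOC}^{(i)}_v$, and almost-divisibility; rather, they are themselves independent hypotheses alongside those (see Proposition~\ref{prop:greenberg-finite-index}, which lists all of them as separate assumptions). So the opening paragraph verifying $\mathrm{RFX}$ and $\mathrm{LOC}^{(i)}_v$ is orthogonal to what this lemma requires --- those checks belong to the proof of Proposition~\ref{prop:modified-selmer-no-finite-index}, not here. Your treatment of $\mathrm{LEO}$ via a Poitou--Tate corank count, using the corank information from Theorem~\ref{thm:selmer-double-module}, Corollary~\ref{cor:coranks}, and Lemma~\ref{lem:Hv-size} together with the $\bar N^- = 1$ hypothesis, is in the right spirit and is essentially what the paper does (citing Greenberg's Section~4.3.2 for the framing).

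The genuine gap is in your treatment of $\mathrm{CRK}(\mathcal D,\cL)$. You assert it is ``a formal consequence of Greenberg's general Euler--Poincar\'e characteristic computation, provided $\cL$ is almost divisible,'' and call the rest ``essentially bookkeeping.'' That is not correct: $\mathrm{CRK}$ is the numerical identity
\[
\corank_\Lambda H^1(K_\Sigma/K,\AA)=\corank_\Lambda \Sel_\cL(K,\AA)+\corank_\Lambda Q_\cL(K,\AA),
\]
and whether it holds depends on the specific choice of $\cL$; it is an independent hypothesis to be checked, not a formal consequence of $\mathrm{LEO}$ and almost-divisibility. The paper verifies it by computing each side explicitly. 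One knows $\corank_\Lambda H^1(K_\Sigma/K,\AA)=2$. From Corollary~\ref{cor:coranks}, $\corank_\Lambda \Sel_\cL(K,\AA)$ equals $0$ for $\cL=\{\emptyset,0\}$ and $1$ for $\cL=\{\Gr,\emptyset\}$. From Lemma~\ref{lem:Hv-size}, the quotients $H^1_{/\cL_v}$ at $v\mid p$ contribute corank $2$ for the full $H^1$ (the $\{\emptyset,0\}$ case) and corank $1$ for the Greenberg quotient $H_\p$ (the $\{\Gr,\emptyset\}$ case), while the $v\nmid p$ pieces contribute corank $0$, so $\corank_\Lambda Q_\cL$ is $2$ and $1$ respectively. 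Thus $0+2=2$ and $1+1=2$, confirming $\mathrm{CRK}$ in both cases. Without this explicit count your argument does not establish $\mathrm{CRK}$.
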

\begin{proof}
{
In light of Corollary \ref{cor:coranks} and Lemma \ref{lem:Hv-size}, the fact that $\mathrm{LEO}(\mathcal D)$ holds follows from the discussion in \cite[Section 4.3.2]{Greenberg16}. Finally, to verify $\mathrm{CRK}(\mathcal D, \mathcal L)$ we must check that
\[
2=\corank_\Lambda H^1(K_\Sigma/K,\AA)=\corank_\Lambda \Sel_\cL(K,\AA)+\corank_\Lambda Q_\cL(K,\AA).
\]
We use Lemma \ref{lem:Hv-size}. In the case of $\mathcal L=\{{\emptyset,0}\}$ we have
\[
\corank_\Lambda \Sel_\cL(K,\AA)=0,\ \corank_\Lambda Q_\cL(K,\AA)=2+0.
\]
Whereas for $\mathcal L=\{\Gr,\emptyset\}$ we have
\[
\corank_\Lambda \Sel_\cL(K,\AA)=1,\ \corank_\Lambda Q_\cL(K,\AA)=1+0.
\]
This completes the proof.
}
\end{proof}

{The main proposition that we need is the following specialization of \cite[Proposition 4.1.1]{Greenberg16}:}
\begin{proposition}\label{prop:greenberg-finite-index}
{Suppose that $\mathrm{RFX}(\mathcal D)$ and $\mathrm{LEO}(\mathcal D)$ are both satisfied, that $\mathrm{LOC}_v^{(2)}(\mathcal D)$ is satisfied for all $v \in \Sigma$, and that there exists a non-archimedean prime $v \in \Sigma$ such that $\mathrm{LOC}_v^{(1)}(\mathcal D)$ is satisfied. Suppose also that the set of local conditions $\cL$ is almost divisible, that $\mathrm{CRK}(\mathcal D, \cL)$ is satisfied, and that $\mathcal{D}[\mathfrak{m}]$ has no subquotients isomorphic to $\mu_p$ for the action of $G_K$.}

{Then $\Sel_\cL(K,\mathcal D)$ is an almost divisible $\Lambda$-module.}
\end{proposition}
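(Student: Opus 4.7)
The plan is to deduce this directly from \cite[Proposition 4.1.1]{Greenberg16}, since the hypotheses stated here are essentially a verbatim specialization of the ones in that reference applied to our $\mathcal D = \AA$ and $\mathcal T = \TT$. So my job is really to explain the match-up of hypotheses rather than to reprove Greenberg's result.

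More concretely, I would first observe that Greenberg works with a general discrete cofinitely generated $\Lambda$-module $\mathcal D$ equipped with its Tate dual $\mathcal T^\ast$ and a choice of Selmer structure $\cL$, and his proposition concludes that $\Sel_\cL(K,\mathcal D)$ is almost divisible once one has (i) reflexivity of the Galois representation, (ii) the local axioms $\mathrm{LOC}_v^{(1)}$ and $\mathrm{LOC}_v^{(2)}$, (iii) a leading error term hypothesis $\mathrm{LEO}$ ensuring that the global-to-local map has the expected cokernel, (iv) a corank equality $\mathrm{CRK}$, (v) almost divisibility of the chosen local conditions, and (vi) absence of $\mu_p$ subquotients in the residual representation. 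Each of these appears in our statement, so I would simply match them term-for-term.

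The only thing to verify beyond the matching is that our formulation of the Selmer group via the kernel of the map into
\[
Q_\cL(K,\AA) = \prod_{w} H_w \times \prod_{w\mid p} H^1_{/\cL_w}(K_{\infty,w}, A)
\]
coincides with Greenberg's. This is just the observation made in Section~\ref{sec:local-galois-cohomology} that, under our hypothesis $\bar N^- = 1$ and after taking $\Sigma$ minimal, the primes at which $H_v$ is considered agree with Greenberg's $\Sigma$-setup. Once this is said, the conclusion is immediate from Greenberg's proposition.

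The only slightly nontrivial point will be emphasising that Greenberg's proof of the proposition is a Poitou--Tate duality argument of the following shape: finite-index $\Lambda$-submodules of $\Sel_\cL(K,\mathcal D)$ can only arise from certain rigidity failures of the global-to-local maps, and these are ruled out by the combination of $\mathrm{LEO}$, $\mathrm{CRK}$, and the local reflexivity axioms. I do not expect to reproduce that argument here since the proposition as stated is precisely a specialization of \textit{loc.~cit.}; the real work of the paper will be to verify the hypotheses for our concrete Selmer groups, which is carried out in Lemmas \ref{lem:no-subquotient-mup} and \ref{lem:LEO-and-CRK} and (for $\mathrm{RFX}$ and $\mathrm{LOC}_v^{(j)}$) by appealing to the standard properties of the Galois representation attached to the ordinary modular form $f$.
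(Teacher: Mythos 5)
Your proposal is correct and matches the paper exactly: the paper itself gives no proof for this proposition, stating it only as a specialization of \cite[Proposition 4.1.1]{Greenberg16}, with the hypothesis-by-hypothesis verification for the concrete Selmer groups deferred (as you note) to Lemmas~\ref{lem:no-subquotient-mup} and~\ref{lem:LEO-and-CRK} and to the proof of Proposition~\ref{prop:modified-selmer-no-finite-index}. Your additional gloss on the shape of Greenberg's argument is accurate but not something the paper includes or requires at this point.
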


We may now apply Greenberg's proposition to obtain the following result.

\begin{proposition}\label{prop:modified-selmer-no-finite-index}
The modified Selmer groups $\Sel_{\Gr,\emptyset}(K,\AA)$ and $\Sel_{{\emptyset,0}}(K,\AA)$ have no proper finite-index $\Lambda$-submodules.
\end{proposition}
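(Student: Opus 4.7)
The plan is to apply Proposition~\ref{prop:greenberg-finite-index} to each of $\Sel_{\Gr,\emptyset}(K,\AA)$ and $\Sel_{\emptyset,0}(K,\AA)$; since an almost divisible $\Lambda$-module has no proper finite-index $\Lambda$-submodule, this will suffice. Several of the hypotheses have already been checked: Lemma~\ref{lem:no-subquotient-mup} rules out a $\mu_p$-subquotient in $\mathcal D[\mathfrak m]$, and Lemma~\ref{lem:LEO-and-CRK} supplies both $\mathrm{LEO}(\mathcal D)$ and $\mathrm{CRK}(\mathcal D,\cL)$ for each of the two choices of $\cL$.

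I would then dispatch the remaining structural hypotheses. Since $T$ is a free $\cO$-module of rank two, $\mathcal T=\TT=T\otimes_{\Zp}\Lambda(\Psi^{-1})$ is a free $\Lambda$-module of rank two; this gives $\mathrm{RFX}(\mathcal D)$ immediately. The same observation applied to $\mathcal T^\ast$ (which, using the self-duality normalization of $T$ mentioned at the start of Section~\ref{sec:selmer}, is also a free $\Lambda$-module) shows that $\mathcal T^\ast/(\mathcal T^\ast)^{G_{K_v}}$ is $\Lambda$-torsion-free, and a standard argument promotes this to reflexivity, yielding $\mathrm{LOC}_v^{(2)}(\mathcal D)$ at every $v\in\Sigma$. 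For $\mathrm{LOC}_v^{(1)}(\mathcal D)$, absolute irreducibility of $\rhobar|_{G_K}$ guarantees the existence of at least one non-archimedean $v\in\Sigma$ for which $(\mathcal T^\ast)^{G_{K_v}}=0$, e.g., any prime of ramification of $\rhobar$ at which the residual representation admits no $G_{K_v}$-fixed line.

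The main obstacle is verifying that the two local condition systems are almost divisible, meaning that $H^1_{/\cL_v}(K_{\infty,w},A)$ is an almost divisible $\Lambda$-module for every place $v$. When $\cL_v=\emptyset$, the quotient vanishes, so there is nothing to check. When $v\nmid p$ the quotient is $H^1_{/f}(K_{\infty,w},A)$, which is almost divisible as a consequence of Lemma~\ref{lem:Hv-size}: here the key point is that our assumption $N^-=1$ combined with \cite[Corollary~1]{brink} rules out case~(iii) of that lemma, so no $v\in\Sigma$ splits completely in $K_\infty$, and the remaining two cases produce cotorsion modules of vanishing $\mu$-invariant, which are automatically almost divisible. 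For $v\mid p$, the nontrivial cases are $\cL_v=0$, with quotient $H^1(K_{\infty,w},A)$, and $\cL_v=\Gr$, with quotient isomorphic (up to a finite error) to $H^1(K_{\infty,w},A'')$; both are almost divisible by the general discussion in \cite[Section~4.3]{Greenberg16}, which relies on the $p$-ordinary and $p$-distinguished hypotheses on $\rhobar$ to preclude any $\mu_p$-subquotient appearing in the filtration pieces $A'$ and $A''$. With all of Greenberg's hypotheses in hand for both choices of $\cL$, a single application of Proposition~\ref{prop:greenberg-finite-index} yields the result.
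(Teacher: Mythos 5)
Your overall strategy is the same as the paper's: apply Proposition~\ref{prop:greenberg-finite-index} by checking each of Greenberg's hypotheses, and indeed $\mathrm{RFX}(\mathcal D)$, the condition on $\mathcal D[\mathfrak m]$, $\mathrm{LEO}(\mathcal D)$, and $\mathrm{CRK}(\mathcal D,\cL)$ are handled exactly as the paper does. The disagreement is in how you dispatch the $\mathrm{LOC}$ hypotheses, and there are genuine gaps there.

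Your treatment of $\mathrm{LOC}_v^{(2)}(\mathcal D)$ is not valid. You correctly observe that $\mathcal T^\ast/(\mathcal T^\ast)^{G_{K_v}}$ is $\Lambda$-torsion-free (this follows from the fact that $G_{K_v}$ acts $\Lambda$-linearly on the free module $\mathcal T^\ast$), but the asserted ``standard argument'' promoting torsion-free to reflexive does not exist: over the two-dimensional regular local ring $\Lambda$, a finitely generated torsion-free module need not be reflexive (the maximal ideal $(\varpi,X)\subset\Lambda$ is torsion-free but not reflexive). One needs a depth or $(S_2)$ argument, which you have not supplied. The paper circumvents this entirely: it invokes \cite[Lemma 5.2.2]{Greenberg11} to get $\mathrm{LOC}_v^{(1)}(\mathcal D)$, i.e.\ $(\mathcal T^\ast)^{G_{K_v}}=0$, at every $v\in\Sigma$ that does not split completely in $K_\infty$; the running hypothesis $N^-=1$ together with a minimal choice of $\Sigma$ ensures this is all of $\Sigma$ (as discussed in Section~\ref{sec:local-galois-cohomology}), and then $\mathrm{LOC}_v^{(2)}$ is automatic because the quotient is the free module $\mathcal T^\ast$ itself. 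Relatedly, your justification of $\mathrm{LOC}_v^{(1)}$ at some $v$ via absolute irreducibility of $\rhobar|_{G_K}$ is also unconvincing: irreducibility is a global condition and does not by itself produce a place $v$ with $(\mathcal T^\ast)^{G_{K_v}}=0$; the paper's citation of Greenberg's lemma (which uses local information at $\mathfrak p$ and $\bar{\mathfrak p}$) is the correct tool.

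There is a smaller but still real error in your verification of almost divisibility of the local conditions at $v\nmid p$: you claim that a cotorsion $\Lambda$-module with $\mu$-invariant zero is ``automatically almost divisible,'' but this is false in general (for instance $\Lambda/\mathfrak m$ is torsion with $\mu=0$, yet its dual is a nonzero finite module, which obstructs almost divisibility). The correct input from Lemma~\ref{lem:Hv-size} (really from Kidwell's Propositions 4.2 and 4.3) is that for finitely decomposed $v\nmid p$, the group $H^1(K_{\infty,w},A)$ is actually $\cO$-divisible, so its Pontryagin dual has no nonzero finite $\Lambda$-submodule; that is what gives almost divisibility, not the vanishing of $\mu$ on its own. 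Your conclusion is right for the cases at hand, but the reasoning is not, and the distinction matters if this style of argument is reused elsewhere.
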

\begin{proof}
{This follows upon checking that these Selmer groups satisfy the hypotheses of Proposition \ref{prop:greenberg-finite-index}. Recall that $\mathcal D = \AA$ and $\mathcal T = \TT$.  The hypothesis $\mathrm{RFX}(\mathcal D)$ is satisfied since $\TT$ is free of rank 2 over $\Lambda$.  That the local conditions under consideration are almost divisible is clear from Lemma \ref{lem:Hv-size}. We verified $\mathrm{LEO}(\mathcal D)$ and $\mathrm{CRK}(\mathcal D, \mathcal L)$ for these choices of local conditions in Lemma \ref{lem:LEO-and-CRK}. The hypothesis on $\mathcal{D}[\mathfrak{m}]$ was verified in Lemma \ref{lem:no-subquotient-mup}.}

{The hypothesis $\mathrm{LOC}_v^{(1)}(\mathcal D)$ is satisfied by at least one $v\in \{\mathfrak{p}, \bar{\mathfrak{p}} \} \subset  \Sigma$ by \cite[Lemma 5.2.2]{Greenberg11}. The same lemma shows that $\mathrm{LOC}_v^{(1)}$ holds for any prime $v \in \Sigma$ which does not split completely in $K_\infty$, and $\mathrm{LOC}_v^{(1)}$ implies $\mathrm{LOC}_v^{(2)}$. Thus, if $\Sigma$ is as small as possible, then the discussion in Section \ref{sec:local-galois-cohomology} shows that this handles all $v \in \Sigma$, and this completes the proof.}
\end{proof}

\subsection{Residual and Non-primitive Selmer Groups}\label{sec:residual-and-nonprimitive-selmer-groups}

Our eventual goal is to compare Selmer groups for modular forms whose residual Galois representations $A[\varpi]$ are isomorphic. We begin by introducing auxiliary Selmer groups which were first defined by Greenberg and Vatsal \cite{greenbergvatsal}; they are formed by relaxing the local conditions at some finite primes away from $p$.

Let $\bar{T}=T/\varpi T$ denote the two-dimensional $k$-vector space on which $G_\Q$ acts via the absolutely irreducible Galois representation $\bar{\rho}$ underlying our fixed Hida family $\mathcal H$; thus, in the notation of the previous section, $A[\varpi]\simeq \bar{T}$.

\begin{definition}
Let $\Sigma_0 \subset \Sigma$ be a subset of $\Sigma$ not containing the archimedean primes or the primes above $p$, and let $\mathcal L$ be a set of local conditions. For any finite extension $F / K$ and for a representation $W \in \{T, A\}$, we define the {\it non-primitive Selmer groups}
\[\label{eq:nonprim-selmer-def}
\Sel_{{\mathcal L}}^{\Sigma_0}(F,W) = \mathrm{ker} \left( H^1_\Sigma(F,W) \rightarrow \prod_{v  \in \Sigma \setminus \Sigma_0} H^1_{/{\mathcal L_v}}(F_v,W) \right),
\]
and as before we may define
\[
\Sel_{{\mathcal L}}^{\Sigma_0}(K, \AA) \simeq \varinjlim \Sel^{\Sigma_0}(F,A).
\]
In the obvious way, we also define the associated {\textit residual} non-primitive Selmer groups
\[\label{eq:nonprim-residual-selmer-def}
\Sel_{{\mathcal L}}^{\Sigma_0}(F,\bar{T}) = \mathrm{ker} \left( H^1_\Sigma(F,\bar{T}) \rightarrow \prod_{v  \in \Sigma \setminus \Sigma_0} H^1_{/{\mathcal L_v}}(F_v,\bar{T})\right)
\]
and
\[
\Sel_{{\mathcal L}}^{\Sigma_0}(K, \AA[\varpi]) \simeq \varinjlim \Sel^{\Sigma_0}(F,\bar{T}).
\]
\end{definition}

The non-primitive Selmer groups are compatible with taking $\varpi$-torsion under some divisibility hypothesis in the following way. {Recall the filtration \eqref{eq:filtration-for-A}
\[
0 \rightarrow A' \rightarrow A \rightarrow A'' \rightarrow 0.
\]
We make the following hypothesis.}

\noindent
\textbf{(H.div)} For all primes $w$ of $K_\infty$ above $p$, both $A^{G_{K_{\infty,w}}}$ and $(A')^{G_{K_{\infty,w}}}$ are divisible.

\bigskip

\begin{proposition}\label{prop:non-primitive-torsion-compatibility}
If $\Sigma_0$ contains all the primes at which $A$ is ramified, then we have an injection  of $\Lambda$-modules
\[
\Sel_{{\mathcal L}}^{\Sigma_0}(K,\AA[\varpi])\hookrightarrow\Sel_{{\mathcal L}}^{\Sigma_0}(K,\AA)[\varpi]
\]
where the image is of finite index.
Suppose that \textbf{(H.div)} holds, then we have an isomorphism.
\end{proposition}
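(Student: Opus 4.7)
The plan is to construct the natural comparison map and analyze its kernel and cokernel via a snake-lemma chase, using the short exact sequence
\[
0 \to A[\varpi] \to A \xrightarrow{\varpi} A \to 0
\]
of $G_{K_\infty}$-modules. Taking $G_{K_\Sigma/K_\infty}$-cohomology yields the short exact sequence
\[
0 \to A^{G_{K_\infty}}/\varpi \to H^1_\Sigma(K_\infty, A[\varpi]) \to H^1_\Sigma(K_\infty, A)[\varpi] \to 0,
\]
with analogous local exact sequences at each prime $v$ of $K_\infty$ whose kernels are $A^{G_{K_{\infty,v}}}/\varpi$. Since $\bar{\rho}|_{G_K}$ is absolutely irreducible whereas $\Gamma = G_K/G_{K_\infty}$ is abelian pro-$p$, the $G_K$-submodule $A[\varpi]^{G_{K_\infty}}$ cannot equal all of $A[\varpi]$ (else $\bar\rho|_{G_K}$ would factor through $\Gamma$), so it vanishes and hence $A^{G_{K_\infty}} = 0$.

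I would then assemble these sequences into a commutative diagram whose rows are the defining sequences of $\Sel^{\Sigma_0}_{\mathcal L}(K, \AA[\varpi])$ and $\Sel^{\Sigma_0}_{\mathcal L}(K, \AA)[\varpi]$. The vanishing of the global kernel forces the induced map on Selmer groups to be injective; by the snake lemma, its cokernel embeds into the direct sum over $v \in \Sigma \setminus \Sigma_0$ of the local kernels
\[
\ker\bigl(H^1_{/\mathcal L_v}(K_{\infty,v}, A[\varpi]) \to H^1_{/\mathcal L_v}(K_{\infty,v}, A)[\varpi]\bigr).
\]
For $v \nmid p$, the hypothesis that $\Sigma_0$ contains all the ramification primes of $A$ means $A$ is unramified at $v$, and a direct cocycle check shows that the connecting image $A^{G_{K_{\infty,v}}}/\varpi \to H^1(K_{\infty,v}, A[\varpi])$ lies inside the unramified subgroup $H^1_f(K_{\infty,v}, A[\varpi])$ (because any lift of an invariant element produces a cocycle that is trivial on inertia). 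Consequently the local kernel at such $v$ vanishes automatically.

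For $v \mid p$ the local condition is $\emptyset$, $\Gr$, or $0$; the case $\mathcal L_v = \emptyset$ is trivial, and in the remaining cases I would run a parallel snake-lemma chase using the filtration $0 \to A' \to A \to A'' \to 0$ and the explicit description of the Greenberg quotient from Section \ref{subsec:defs}, concluding that the local kernel is controlled by the finite $\mathcal O$-modules $A^{G_{K_{\infty,v}}}/\varpi$ and $(A')^{G_{K_{\infty,v}}}/\varpi$. Finiteness of these kernels yields the finite-index statement. Under \textbf{(H.div)} both $A^{G_{K_{\infty,v}}}$ and $(A')^{G_{K_{\infty,v}}}$ are divisible at every $v \mid p$, so their $\varpi$-quotients vanish, every local kernel is zero, and the injection upgrades to an isomorphism. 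The main obstacle will be correctly tracking the reduction-mod-$\varpi$ behavior of the Greenberg condition at primes above $p$: verifying that the local condition for $A[\varpi]$ really is the natural reduction of the one for $A$, so that the comparison square commutes and the local kernel is \emph{exactly} the $\varpi$-quotient of $(A')^{G_{K_{\infty,v}}}$ that \textbf{(H.div)} is designed to kill.
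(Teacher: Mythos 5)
Your overall strategy mirrors the paper's: use the exact sequence $0\to A[\varpi]\to A\xrightarrow{\varpi}A\to 0$, note that absolute irreducibility kills $A^{G_{K_\infty}}$ (giving an isomorphism on the global $H^1$), and then compare local conditions place by place, identifying the local obstruction at $w\mid p$ with the finite modules $A^{G_{K_{\infty,w}}}/\varpi$ and $(A')^{G_{K_{\infty,w}}}/\varpi$ that \textbf{(H.div)} kills. Your elaboration of why $A^{G_{K_\infty}}=0$ (the $\Gamma$-factorization argument) is a correct fleshing out of the paper's terse ``by absolute irreducibility.''

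However, your argument for the vanishing of the local kernel at $v\nmid p$ has a gap. You show that the connecting image $A^{G_{K_{\infty,v}}}/\varpi \to H^1(K_{\infty,v},A[\varpi])$ lands inside $H^1_f(K_{\infty,v},A[\varpi])$; this is true, but it is not what you need. The local term you must kill is
\[
\ker\bigl(H^1_{/f}(K_{\infty,v},A[\varpi])\to H^1_{/f}(K_{\infty,v},A)\bigr),
\]
and a class $[c]$ here comes from some $c\in H^1(K_{\infty,v},A[\varpi])$ whose image in $H^1(K_{\infty,v},A)$ is unramified — $c$ itself need not come from the connecting map, so knowing that the connecting image is unramified tells you nothing about $c$. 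The correct observation (the one the paper uses) is one level deeper: restricting to inertia, one has $\ker\bigl(H^1(I_v,A[\varpi])\to H^1(I_v,A)\bigr)=A^{I_v}/\varpi A^{I_v}$, and since $A$ is unramified at $v$ (so $A^{I_v}=A$) and $\varpi$-divisible, this quotient vanishes. Thus $H^1(I_v,A[\varpi])\hookrightarrow H^1(I_v,A)$, and since $H^1_{/f}$ injects into $H^1(I_v,-)$, the desired injectivity follows. Your unramifiedness and divisibility inputs are the right ones, but they need to be applied to the inertia-level connecting map, not the full local connecting map. Once that step is repaired, the rest of your chase — including the treatment of $w\mid p$ via the filtration $0\to A'\to A\to A''\to 0$ and the finiteness/vanishing of the $Q_w$ under \textbf{(H.div)} — is exactly the paper's proof.
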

\begin{proof}
 By the absolute irreducibility of $\bar{T}$, the short exact sequence
\begin{equation}\label{eq:short-exact-torsion}
0 \rightarrow \bar{T} \rightarrow A \xrightarrow{\varpi} A \rightarrow 0
\end{equation}
induces an isomorphism
\[
H^1_\Sigma(K_\infty,\bar{T}) \simeq H^1_\Sigma(K_\infty,A)[\varpi].
\]
So, we must check compatibility of the local conditions defining the relevant Selmer groups.

For a prime $w$ of $K_\infty$ dividing $\ell \in \Sigma \setminus \Sigma_0$ with $\ell \neq p$, since $A$ is not ramified at $w$, the corresponding inertia group $I_w$ acts trivially on $A$. The long exact sequence coming from \eqref{eq:short-exact-torsion} yields the exact sequence
\[
0\rightarrow A/\varpi A\rightarrow H^1(I_w,\bar{T}) \rightarrow H^1(I_w,A)[\varpi] \rightarrow 0.
\]
Since $A$ is divisible, the first term in this sequence is zero, so we have an isomorphism
\[
H^1(I_w,\bar{T}) \simeq H^1(I_w,A)[\varpi],
\]
as desired.

 For the Greenberg condition, we have the exact sequence
\[
0\rightarrow (A')^{G_{K_{\infty,w}}}/\varpi (A')^{G_{K_{\infty,w}}}\rightarrow H^1_f(K_{\infty,w},\bar{T}) \rightarrow H^1_f(K_{\infty,w},A)[\varpi] \rightarrow 0.
\]
Since $(A')^{G_{K_{\infty,w}}}$ is cofinitely generated over $\cO$, the quotient $(A')^{G_{K_{\infty,w}}}/\varpi (A')^{G_{K_{\infty,w}}}$ is finite.
Similarly, we have
\[
0\rightarrow A^{G_{K_{\infty,w}}}/\varpi A^{G_{K_{\infty,w}}}\rightarrow H^1(K_{\infty,w},\bar{T}) \rightarrow H^1(K_{\infty,w},A)[\varpi] \rightarrow 0.
\]
This gives an exact sequence
\[
0\rightarrow \Sel_{{\mathcal L}}^{\Sigma_0}(K,\AA[\varpi])\rightarrow\Sel_{{\mathcal L}}^{\Sigma_0}(K,\AA)[\varpi]
\rightarrow \prod_{w|p} Q_w,
\]
where $Q_w$ is either $(A')^{G_{K_{\infty,w}}}/\varpi (A')^{G_{K_{\infty,w}}}$, $ A^{G_{K_{\infty,w}}}/\varpi A^{G_{K_{\infty,w}}}$ or $0$ depending on the choice of $\cL$.
See \cite[proof of Lemma~3.5]{pollackweston11} for a similar calculation. The last term in the exact sequence is finite and is zero when \textbf{(H.div)} holds, hence the result.
\end{proof}

\begin{remark}\label{remark:hdiv}
We remark that if ${\bar T}^{G_{K_{v}}}=0$, then ${\bar T}^{G_{K_{\infty,w}}}=0$ for $w|v$ since $K_{\infty,w}/K_v$ is a pro-$p$ extension.  This in turn implies that $A^{G_{K_{\infty,w}}}=0$ and in particular \textbf{(H.div)} holds. In the case where $f$ corresponds to an elliptic curve, it is the same as saying that it has no $p$-torsion defined over $K_v$.

If we are only interested in a particular choice of $\cL$, we may weaken \textbf{(H.div)} and only impose the relevant divisibility condition for the local conditions we consider. For example, if $\cL=\{\Gr,\Gr\}$, we only need to assume that $(A')^{G_{K_{\infty,w}}}$ is divisible for all $w|p$. If $\cL=\{\emptyset,0\}$, then we only need to assume that $A^{G_{K_{\infty,w}}}$ is divisible for all $w$ dividing the prime above $p$ where we impose the $\emptyset$ condition.
\end{remark}

From now on, we assume that all the modular forms we consider satisfy \textbf{(H.div)}.
The main utility of the residual non-primitive Selmer groups is given by the following proposition.

\begin{proposition}\label{prop:residual-selmer-isomorphic}
Let $f$ and $g$ be modular forms whose associated residual Galois representations $\bar{T}_f$ and $\bar{T}_g$ are isomorphic. If $\Sigma_0$ contains {all primes for which $T_f$ or $T_g$ is ramified}, then there is an isomorphism of $\Lambda$-modules
\[
\Sel_{{\mathcal L}}^{\Sigma_0}(K,\AA_f[\varpi]) \simeq \Sel_{{\mathcal L}}^{\Sigma_0}(K,\AA_g[\varpi]).
\]
\end{proposition}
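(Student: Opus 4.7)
The plan is to exhibit the claimed isomorphism by identifying the ambient Galois cohomology of $\bar T_f$ and $\bar T_g$ and then matching up each of the local conditions that cut out the non-primitive residual Selmer group. Granted an isomorphism $\bar T_f \simeq \bar T_g$ as $G_\Q$-modules (hence as $G_K$-modules and $G_{K_\infty}$-modules), functoriality of Galois cohomology immediately produces compatible isomorphisms
\[
H^1_\Sigma(K_\infty,\bar T_f) \xrightarrow{\sim} H^1_\Sigma(K_\infty,\bar T_g), \qquad H^1(K_{\infty,w},\bar T_f) \xrightarrow{\sim} H^1(K_{\infty,w},\bar T_g)
\]
for every prime $w$ of $K_\infty$ lying above $\Sigma$, and these are $\Lambda$-equivariant because the $\Gamma$-action is built diagonally through $\Psi^{-1}$. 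So the proof reduces to checking, place by place, that the subgroup $H^1_{\cL_v}$ on the $f$-side is carried onto the corresponding subgroup on the $g$-side.

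First I would dispose of the primes $v \in \Sigma_0$, where by definition the local condition is the full $H^1$, so nothing needs to be checked. Next, for $v \in \Sigma \setminus \Sigma_0$ with $v \nmid p$, the condition is the unramified one $H^1(I_w,\bar T)$; since $\Sigma_0$ contains every prime where either $T_f$ or $T_g$ is ramified, both $\bar T_f$ and $\bar T_g$ are unramified at such $v$, and so the unramified subgroup depends only on the $G_{K_{\infty,w}}$-module structure of $\bar T$, which is identified on both sides by hypothesis.

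The delicate case is $v \mid p$, where the relevant conditions are $\emptyset$, $\Gr$, and $0$. The first and third are trivially preserved by any isomorphism. For the Greenberg condition, I would use the reduction mod $\varpi$ of the filtration \eqref{seq:filtration}: the $G_{\Q_p}$-stable short exact sequence $0 \to T' \to T \to T'' \to 0$ reduces to a $G_{\Q_p}$-stable filtration $0 \to \bar T' \to \bar T \to \bar T'' \to 0$, and the Greenberg condition at $w \mid p$ on $\bar T$ is precisely $H^1(K_{\infty,w},\bar T')$. Here the $p$-distinguished and $p$-ordinary hypotheses on $\bar\rho$ enter essentially: they force this filtration of $\bar T$ to be canonical (the unique line fixed by the decomposition group on which inertia acts by the distinguishing character), so both $\bar T_f$ and $\bar T_g$ inherit the \emph{same} filtration under any chosen isomorphism $\bar T_f \simeq \bar T_g$. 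Consequently the subgroups $H^1(K_{\infty,w},\bar T_f') \subset H^1(K_{\infty,w},\bar T_f)$ and $H^1(K_{\infty,w},\bar T_g') \subset H^1(K_{\infty,w},\bar T_g)$ correspond under the cohomology isomorphism.

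The main (and really only) obstacle is the last paragraph: making precise that the Greenberg condition at $p$ is intrinsic to the residual representation. Once this is granted, assembling the individual compatibilities into the global isomorphism
\[
\Sel_{\cL}^{\Sigma_0}(K,\AA_f[\varpi]) \xrightarrow{\sim} \Sel_{\cL}^{\Sigma_0}(K,\AA_g[\varpi])
\]
is formal from the definition of the non-primitive residual Selmer group as the kernel of a product of local restriction maps.
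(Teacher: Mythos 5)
Your proof is correct and follows essentially the same line as the paper's: the paper's proof simply remarks that the local conditions depend only on the isomorphism class of $\bar{T}_f$, citing the local-condition analysis in the proof of Proposition~\ref{prop:non-primitive-torsion-compatibility}, and your argument makes this explicit by checking each place. You have also correctly identified the one genuinely non-formal point, namely that the Greenberg local condition at $w \mid p$ is intrinsic to the residual representation because the $p$-distinguished hypothesis forces the reduction $0 \to \bar{T}' \to \bar{T} \to \bar{T}'' \to 0$ to be the unique $G_{\Q_p}$-stable filtration with the prescribed characters on sub and quotient, a detail the paper leaves implicit; the only imprecision is that the uniqueness should be phrased in terms of the $G_{\Q_p}$-action (not just the inertia action), since the distinguished characters may restrict to the same character of inertia.
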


\begin{proof}
It suffices to show that the local conditions defining $\Sel_{{\mathcal L}}^{\Sigma_0}(K,\AA_f[\varpi])$ depend only on the isomorphism class of $\bar{T}_f$. This is clear from our proof of Proposition~\ref{prop:non-primitive-torsion-compatibility}.
\end{proof}

\section{The vanishing of $\mu$-invariants}\label{sec:mu}

In this section, we study the vanishing of $\mu$-invariants of the various Selmer groups we defined in \S\ref{sec:selmer}.  Recall the modified and non-primitive Selmer groups which we defined in Section~\ref{sec:selmer}{, which depend on a choice $\cL$ among the four local conditions $\{\emptyset,0\}, \{\Gr,0\}, \{\Gr,\Gr\}, \{\Gr,\emptyset\}$}.

\begin{lemma}\label{lem:compareprimitive}
Let $\cL$ be any choice of local conditions and $\Sigma_0 \subset \Sigma$ be a subset of $\Sigma$ not containing the archimedean primes or the primes above $p$, then
\[
\mu( \Sel_{\mathcal L}(K,\AA))=\mu(\Sel^{\Sigma_0}_{\mathcal L}(K,\AA));\quad \corank_\Lambda \Sel_{\mathcal L}(K,\AA)=\corank_\Lambda\Sel^{\Sigma_0}_{\mathcal L}(K,\AA).
\]
\end{lemma}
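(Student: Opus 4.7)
The plan is to compare the two Selmer groups via the tautological short exact sequence relating them. By definition of the non-primitive Selmer group, relaxing the local conditions at places $v \in \Sigma_0$ yields a left-exact sequence
\[
0 \to \Sel_{\mathcal L}(K,\AA) \to \Sel^{\Sigma_0}_{\mathcal L}(K,\AA) \to \prod_{v \in \Sigma_0} H_v,
\]
where $H_v=\varinjlim_F \prod_{w\mid v}H^1_{/f}(F_w,A)$ is the local group studied in Section \ref{sec:local-galois-cohomology}. Let $Q$ denote the image of the right-hand map, so that we have a short exact sequence
\[
0 \to \Sel_{\mathcal L}(K,\AA) \to \Sel^{\Sigma_0}_{\mathcal L}(K,\AA) \to Q \to 0,
\]
with $Q \hookrightarrow \prod_{v \in \Sigma_0} H_v$.

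Next I would invoke Lemma \ref{lem:Hv-size}. Since $\Sigma_0$ by hypothesis contains neither archimedean primes nor primes above $p$, each $v \in \Sigma_0$ falls under case (i) or case (iii) of the lemma; but by the discussion immediately following Lemma \ref{lem:Hv-size}, the hypothesis $\bar{N}^-=1$ guarantees (via \cite{brink}) that case (iii) never occurs for primes in $\Sigma$ when $\Sigma$ is chosen minimally. Thus every $H_v$ for $v \in \Sigma_0$ is cofinitely generated and cotorsion over $\Lambda$ with $\mu$-invariant zero, and the same then holds for the submodule $Q$.

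Finally, dualizing gives the short exact sequence
\[
0 \to \widehat{Q} \to \widehat{\Sel^{\Sigma_0}_{\mathcal L}(K,\AA)} \to \widehat{\Sel_{\mathcal L}(K,\AA)} \to 0
\]
in which $\widehat{Q}$ is a torsion $\Lambda$-module with $\mu(\widehat{Q})=0$. Applying Proposition \ref{prop:additivity} yields $\mu(\widehat{\Sel^{\Sigma_0}_{\mathcal L}}) = \mu(\widehat{\Sel_{\mathcal L}})$, which is the first claimed equality. For the second, since $\widehat{Q}$ is $\Lambda$-torsion, taking $\Lambda$-ranks in the same short exact sequence gives $\rank_\Lambda \widehat{\Sel^{\Sigma_0}_{\mathcal L}} = \rank_\Lambda \widehat{\Sel_{\mathcal L}}$, which by definition is the equality of coranks.

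The only subtlety — and the step I would double-check carefully — is the appeal to the assumption $\bar{N}^-=1$ to rule out the completely-split case (iii) of Lemma \ref{lem:Hv-size}, since otherwise $H_v$ could have positive $\Lambda$-corank and the argument would collapse. Everything else reduces to routine application of additivity of $\mu$ and rank in short exact sequences.
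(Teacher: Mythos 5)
Your proof is correct and follows essentially the same route as the paper: set up the left-exact sequence relating the primitive and non-primitive Selmer groups, observe that the local groups $H_v$ for $v\in\Sigma_0$ are cotorsion with vanishing $\mu$-invariant (which, as you rightly flag, relies on $\bar N^-=1$ ruling out case (iii) of Lemma~\ref{lem:Hv-size}), then dualize and apply Proposition~\ref{prop:additivity}. The only difference is cosmetic: you introduce the image $Q$ explicitly so that the sequence you dualize is genuinely short exact, whereas the paper leaves this implicit.
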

\begin{proof}
We have the exact sequence
\begin{equation}\label{eq:nonprimitive-selmer-left-exact}
0 \rightarrow \Sel_{\mathcal L}(K,\AA) \rightarrow \Sel^{\Sigma_0}_{\mathcal L}(K,\AA) \rightarrow \prod_{v \in \Sigma_0} H_v
\end{equation}
by definitions. Lemma~\ref{lem:Hv-size}(i) tells us that {for all $v\in\Sigma_0$, the $\Lambda$-module $H_v$ is cotorsion with $\mu$-invariant equal to zero}. Hence the result follows on taking duals in \eqref{eq:nonprimitive-selmer-left-exact} and applying Proposition~\ref{prop:additivity}.
\end{proof}

{Let us introduce some additional notation to be used throughout the remainder of the paper. Given a modular form $f$ and a choice $\cL$ of local conditions, write $\mu_{\mathcal L}(f)$ and $\lambda_{\mathcal L}(f)$ for the $\mu$- and $\lambda$-invariants of $\Sel_{\mathcal L}(K,\AA_f)$, respectively. Given two modular forms $f$ and $g$, let $\cS_{\mathcal L}(f)$ and $\cS_{\mathcal L}(g)$ denote the non-primitive Selmer groups $\Sel^{\Sigma_0}_{\mathcal L}(K,\AA_f)$ and $\Sel^{\Sigma_0}_{\mathcal L}(K,\AA_g)$, respectively, where $\Sigma_0 \subset \Sigma$ is a finite set of primes containing all primes for which $T_f$ or $T_g$ is ramified.}

\begin{proposition}\label{prop:mucong}
Let $f$ and $g$ be modular forms satisfying the hypotheses of Section \ref{sec:notation} whose associated residual Galois representations $\bar{T}_f$ and $\bar{T}_g$ are isomorphic.  Then the $\mu$-invariant of $\cS_\cL(f)$ vanishes if and only if that of $\cS_\cL(g)$ does.
\end{proposition}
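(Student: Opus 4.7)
The plan is to reduce this directly to Corollary~\ref{cor:mu-vanish-together}, applied to the Pontryagin duals $M=\widehat{\cS_\cL(f)}$ and $M'=\widehat{\cS_\cL(g)}$. Two things must be verified: that $\rank_\Lambda M=\rank_\Lambda M'$, and that $M/\varpi\cong M'/\varpi$ as $\Lambda$-modules.

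For the rank equality, I would invoke Lemma~\ref{lem:compareprimitive}, which tells us that the non-primitive Selmer groups have the same $\Lambda$-corank as their primitive counterparts. By Corollary~\ref{cor:coranks} (combined with Lemma~\ref{lem:selmer-ranks}(i) for the $\{\Gr,\Gr\}$ case), those coranks depend only on the choice $\cL$ of local conditions, not on the particular modular form in $\widehat{\cH}$. Hence $\rank_\Lambda \widehat{\cS_\cL(f)}=\rank_\Lambda \widehat{\cS_\cL(g)}$.

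For the identification of $M/\varpi$ with $M'/\varpi$, I would exploit the standard Pontryagin duality identity $\widehat{N}/\varpi\cong \widehat{N[\varpi]}$ valid for any cofinitely generated discrete $\Lambda$-module $N$. Applying this to $N=\cS_\cL(f)$ and $N=\cS_\cL(g)$, and then feeding in the hypothesis \textbf{(H.div)} via Proposition~\ref{prop:non-primitive-torsion-compatibility}, yields
\[
\widehat{\cS_\cL(f)}/\varpi\cong \widehat{\Sel^{\Sigma_0}_{\cL}(K,\AA_f[\varpi])},\qquad \widehat{\cS_\cL(g)}/\varpi\cong \widehat{\Sel^{\Sigma_0}_{\cL}(K,\AA_g[\varpi])}.
\]
Since $\bar T_f\cong \bar T_g$, Proposition~\ref{prop:residual-selmer-isomorphic} identifies the two residual non-primitive Selmer groups as $\Lambda$-modules, which gives the desired isomorphism $M/\varpi\cong M'/\varpi$. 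Corollary~\ref{cor:mu-vanish-together} then closes the argument.

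The only point that requires a moment's care is the choice of $\Sigma_0$: it must contain every prime at which either $T_f$ or $T_g$ ramifies in order for Proposition~\ref{prop:residual-selmer-isomorphic} to apply, and it must avoid the archimedean places and primes above $p$ in order for Lemma~\ref{lem:compareprimitive} to apply. Both conditions are compatible and are exactly what the definition of $\cS_\cL$ in the preamble to the proposition already guarantees, so no genuine obstruction appears; the substantive work was done in Propositions~\ref{prop:non-primitive-torsion-compatibility} and \ref{prop:residual-selmer-isomorphic}, and in the module-theoretic Corollary~\ref{cor:mu-vanish-together}.
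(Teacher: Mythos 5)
Your proof is correct and follows essentially the same route as the paper: apply Propositions~\ref{prop:non-primitive-torsion-compatibility} and \ref{prop:residual-selmer-isomorphic} to identify the $\varpi$-torsion subgroups (dually, the mod-$\varpi$ quotients of the Pontryagin duals), get the rank equality from Corollary~\ref{cor:coranks} together with Lemma~\ref{lem:compareprimitive}, and conclude via Corollary~\ref{cor:mu-vanish-together}. The paper phrases the middle step directly at the level of $\cS_\cL(\cdot)[\varpi]$ rather than spelling out the duality identity $\widehat{N}/\varpi\cong\widehat{N[\varpi]}$, but that is a purely expository difference.
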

\begin{proof}
Since $\bar{T}_f \simeq \bar{T}_g$, Propositions \ref{prop:non-primitive-torsion-compatibility} and \ref{prop:residual-selmer-isomorphic} give the following $\Lambda$-module isomorphisms
\[
\cS_{\mathcal L}(f)[\varpi] \simeq \Sel^{\Sigma_0}_{\mathcal L}(K,\AA_f[\varpi])
\simeq \Sel^{\Sigma_0}_{\mathcal L}(K,\AA_g[\varpi]) \simeq \cS_{\mathcal L}(g)[\varpi].
\]
Furthermore,  $\cS_\cL(f)$ and $\cS_\cL(g)$ have the same $\Lambda$-corank on combining Corollary~\ref{cor:coranks} and the second half of Lemma~\ref{lem:compareprimitive}. Therefore, our result follows from Corollary~\ref{cor:mu-vanish-together}.
\end{proof}

The following anticyclotomic analogue of \cite[Theorem 4.3.3]{epw} is an immediate corollary.

\begin{theorem}\label{thm:mu-in-families}
If $\cL$ is one of the conditions $\{\emptyset,0\}, \{\Gr,0\}, \{\Gr,\Gr\}, \{\Gr,\emptyset\}$, then the following are equivalent.
\begin{enumerate}
\item $\mu_\cL(f)=0$ for some $f \in \mathcal{\hat{H}}$.
\item $\mu_\cL(f)=0$ for every $f \in \mathcal{\hat{H}}$.
\end{enumerate}
\end{theorem}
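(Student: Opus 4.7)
The plan is to chain together the two principal results of this section. First, by Lemma~\ref{lem:compareprimitive} we have
\[
\mu_{\mathcal L}(f) = \mu(\cS_{\mathcal L}(f))
\]
for every $f \in \hat{\mathcal H}$ and every choice of local conditions $\mathcal L$, so the vanishing of the ``primitive'' $\mu$-invariant is equivalent to the vanishing of the non-primitive one. Thus it suffices to prove the statement with $\mu_{\mathcal L}(f)$ replaced by $\mu(\cS_{\mathcal L}(f))$.

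Now fix any two modular forms $f, g \in \hat{\mathcal H}$. By the definition of $\hat{\mathcal H}$ (and the running hypothesis in Section~\ref{sec:notation}), both $f$ and $g$ have residual Galois representation isomorphic to the fixed $\bar{\rho}$; in particular $\bar T_f \simeq \bar T_g$ as $G_\Q$-modules, so we may choose a common $\Sigma_0 \subset \Sigma$ containing every prime at which either $T_f$ or $T_g$ is ramified (other than the archimedean ones and those above~$p$), and form the non-primitive Selmer groups $\cS_{\mathcal L}(f)$ and $\cS_{\mathcal L}(g)$ with respect to this $\Sigma_0$. Proposition~\ref{prop:mucong} then applies and gives
\[
\mu(\cS_{\mathcal L}(f)) = 0 \iff \mu(\cS_{\mathcal L}(g)) = 0.
\]

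Combining these two observations yields (1)$\iff$(2): if $\mu_{\mathcal L}(f) = 0$ for some $f \in \hat{\mathcal H}$, then $\mu(\cS_{\mathcal L}(f)) = 0$, hence $\mu(\cS_{\mathcal L}(g)) = 0$ for every $g \in \hat{\mathcal H}$, hence $\mu_{\mathcal L}(g) = 0$ for every such $g$. The converse is trivial. There is no real obstacle here: the substantive content has been packaged into Lemma~\ref{lem:compareprimitive} and Proposition~\ref{prop:mucong}, and this theorem is simply the statement that the property of having vanishing $\mu$-invariant propagates through the Hida family by virtue of depending only on the residual representation.
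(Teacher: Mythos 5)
Your proof is correct and follows exactly the same route as the paper's (which simply cites Proposition~\ref{prop:mucong} together with the first half of Lemma~\ref{lem:compareprimitive}); you have merely spelled out the chain of equivalences in more detail.
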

\begin{proof}
This follows from Proposition~\ref{prop:mucong} and the first half of Lemma~\ref{lem:compareprimitive}.
\end{proof}

Suppose that $f_E \in \mathcal{\hat{H}}$ is a modular form of weight $k=2$ corresponding to an elliptic curve $E / \Q$. The following theorem is due to Matar \cite[Theorem 3.4]{matar}.

\begin{theorem}\label{thm:matar}
In addition to the hypotheses already present in this paper, assume that
\begin{enumerate}
\item $\mathrm{Gal}(\Q(E[p])/\Q) = \mathrm{GL}_2(\F_p)$
\item $p \nmid \# E(\F_p$)
\item $a_p \neq 2 \mod p$
\end{enumerate}
Then $\mu(f_E)=0.$
\end{theorem}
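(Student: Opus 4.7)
The plan is to invoke Matar's result \cite[Theorem 3.4]{matar} directly, as this theorem is stated in the present paper as a citation of that work. My first task would be to verify that the hypotheses here align with those in Matar's setup: the admissibility conditions of Section \ref{subsec:properties}, together with \textbf{(H.div)} and the additional Galois-image, good-reduction, and $a_p$ conditions in the statement, cover Matar's running assumptions in the weight-$2$ case. I would also check that what we call $\mu(f_E)$ matches Matar's invariant: by Theorem \ref{thm:selmer-double-module} we have $\X(K,\AA_{f_E}) \sim \Lambda \oplus M \oplus M$, so $\mu(f_E) = \mu(\X(K,\AA_{f_E})) = 2\mu(M)$, which coincides with the $\mu$-invariant Matar studies for the Greenberg Selmer group.

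To sketch the approach Matar takes: the route to $\mu = 0$ in the positive-corank anticyclotomic setting runs through the Heegner point Euler system, following Bertolini--Darmon, Cornut--Vatsal, and Howard. The surjectivity hypothesis (1) provides, via Chebotarev, a rich supply of admissible Kolyvagin primes at which the first and second reciprocity laws can be applied to manufacture cohomology classes with prescribed local behaviour. Hypothesis (2) controls the local Tamagawa-type contribution at $p$ and forces the universal-norm Heegner class to reduce nontrivially modulo $\varpi$, while hypothesis (3) rules out the exceptional case in which the relevant Heegner input would vanish mod $p$.

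The main technical obstacle -- and the substantive content of Matar's proof -- is the passage from nontriviality of Heegner classes modulo $p$ to a uniform bound of the form
\[
e\bigl((\X(K,\AA_{f_E})/\varpi)_{\Gamma_n}\bigr) = p^n + O(1).
\]
By Corollary \ref{cor:mu-invariant-zero-condition}, applied to the rank-$1$ $\Lambda$-module $\X(K,\AA_{f_E})$ from Lemma \ref{lem:selmer-ranks}, such a bound is precisely equivalent to $\mu(f_E)=0$. Once Matar's Selmer bound is in hand, the conclusion follows at once, and the genuinely hard input in this paper is isolated into a single citation.
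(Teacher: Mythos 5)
Your proposal matches the paper exactly: the theorem is stated as a direct citation of Matar [Theorem 3.4], and the paper provides no proof of its own. Your verification that $\mu(f_E)$ refers to the Greenberg Selmer group invariant (via $\mathcal L = \{\Gr,\Gr\}$, Remark \ref{rmk:local-subgroups}) and your sketch of Matar's Heegner-point Euler system argument are additional context not present in the paper, but they do not change the fact that both you and the paper rest the statement entirely on the citation.
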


Thus, if $\mathcal{\hat{H}}$ contains a weight $2$ modular form $f_E$ which is associated to an elliptic curve $E / \Q$ satisfying the hypotheses of Theorem \ref{thm:matar}, then $\mu(f)=0$ for every $f \in \mathcal{\hat{H}}$ by Theorem \ref{thm:mu-in-families}. It would be interesting to try to adapt the arguments of \cite{matar} to more general modular forms, which would enlarge the number of immediate applications of the main results of the present paper. The authors hope to pursue this line of inquiry in future work.

\section{Comparing $\lambda$-invariants for congruent modular forms}\label{sec:main}

In this section, we compare the $\lambda$-invariants of several Selmer groups for  congruent modular forms under the assumption that the corresponding $\mu$-invariants vanish. As in \S\ref{sec:mu}, we do so on utilizing non-primitive Selmer groups.
It is clear from the definitions that we have exact sequences
\begin{equation}\label{eq:selmer-left-exact-Gr-empty}
0 \rightarrow \Sel_{\Gr,\emptyset}(K,\AA) \rightarrow H^1_{\Sigma}(K,\AA) \rightarrow \prod_{\substack{v \in \Sigma \setminus \Sigma_0 \\ v \nmid p}} H_v \times H_{\mathfrak{p}}
\end{equation}
and
\begin{equation}\label{eq:selmer-left-exact-0-empty}
0 \rightarrow \Sel_{{\emptyset,0}}(K,\AA) \rightarrow H^1_{\Sigma}(K,\AA) \rightarrow {\left(\prod_{\substack{v \in \Sigma \setminus \Sigma_0 \\ v \nmid p}} H_v\right) \times H^1(K_{\mathfrak{\bar p}},\AA)}.
\end{equation}
In fact, the sequences \eqref{eq:selmer-left-exact-Gr-empty} and  \eqref{eq:selmer-left-exact-0-empty} as well as \eqref{eq:nonprimitive-selmer-left-exact}  are all exact on the right for $\mathcal L=\{\Gr,\emptyset \}$ or $\{0,\emptyset\}$.

\begin{proposition}\label{prop:gr-zero-sel-exact}
The sequences
\[
0 \rightarrow \Sel_{\Gr,\emptyset}(K,\AA) \rightarrow H^1_{\Sigma}(K,\AA) \xrightarrow{\gamma_1} \left(\prod_{\substack{v \in \Sigma \setminus \Sigma_0 \\ v \nmid p}} H_v\right) \times H_{\mathfrak{p}} \rightarrow 0,
\]

\[
0 \rightarrow \Sel_{{\emptyset,0}}(K,\AA) \rightarrow H^1_{\Sigma}(K,\AA) \xrightarrow{\gamma_2}\left( \prod_{\substack{v \in \Sigma \setminus \Sigma_0 \\ v \nmid p}} H_v \right)\times H^1(K_{{\bar\p}},\AA)\rightarrow 0 ,
\]
and
\[
0 \rightarrow \Sel_{\mathcal L}(K,\AA) \rightarrow \Sel^{\Sigma_0}_{\mathcal L}(K,\AA) \rightarrow \prod_{v \in \Sigma_0} H_v \rightarrow 0,
\]
for $\mathcal L=\{\Gr,\emptyset \}$ or ${\{\emptyset,0\}}$, are all exact.
\end{proposition}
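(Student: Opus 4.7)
The plan is to establish surjectivity of the rightmost arrow in each of the three displayed sequences; left exactness in each case is immediate from the definitions, as already observed in the paragraphs preceding the statement.

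For the first two sequences, surjectivity is a Poitou-Tate global duality question. The target of each rightmost map is a direct factor of $Q_\cL(K, \AA)$ for the appropriate $\cL \in \{\{\Gr, \emptyset\}, \{\emptyset, 0\}\}$, so it suffices to show that $H^1_\Sigma(K, \AA) \to Q_\cL(K, \AA)$ is surjective. The Poitou-Tate nine-term exact sequence for $\AA$ over $K_\Sigma/K$ presents the cokernel of this global-to-local map as a quotient of the Pontryagin dual of the flipped Selmer group $\Sel_{\cL^\perp}(K, \TT)$, where $\cL^\perp$ swaps $\emptyset \leftrightarrow 0$ and fixes $\Gr$. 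To force this cokernel to vanish, I would assemble the inputs already verified in the preceding sections: $\mathrm{LEO}(\mathcal D)$ and $\mathrm{CRK}(\mathcal D, \cL)$ from Lemma \ref{lem:LEO-and-CRK}; the absence of $\mu_p$-subquotients from Lemma \ref{lem:no-subquotient-mup}; and $\mathrm{RFX}(\mathcal D)$ together with the local conditions $\mathrm{LOC}_v^{(i)}(\mathcal D)$ verified in the proof of Proposition \ref{prop:modified-selmer-no-finite-index}, plus the almost-divisibility of the chosen local conditions (clear from Lemma \ref{lem:Hv-size}). These inputs, assembled in the framework of \cite{Greenberg16} (or alternatively following the parallel arguments in \cite[Appendix A]{Castella}), force the Poitou-Tate cokernel to vanish.

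For the third sequence, a snake lemma diagram chase suffices. Compare the defining global-to-local sequences of $\Sel_\cL$ and $\Sel_\cL^{\Sigma_0}$ in the commutative diagram
\[
\begin{array}{ccccc}
\Sel_\cL(K, \AA) & \hookrightarrow & H^1_\Sigma(K, \AA) & \to & Q_\cL(K, \AA) \\
\downarrow & & \| & & \downarrow \\
\Sel_\cL^{\Sigma_0}(K, \AA) & \hookrightarrow & H^1_\Sigma(K, \AA) & \to & Q_\cL^{\Sigma_0}(K, \AA)
\end{array}
\]
where the rightmost vertical projection omits the $\Sigma_0$-components, with kernel $\prod_{v \in \Sigma_0} H_v$. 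Surjectivity of both top rows follows from the first part of the proof for the relevant $\cL$, and the snake lemma then produces the desired short exact sequence.

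The main obstacle is establishing vanishing of the Poitou-Tate cokernel in the first part: the condition $\mathrm{CRK}(\mathcal D, \cL)$ alone only matches $\Lambda$-coranks and so only yields surjectivity up to $\Lambda$-torsion. The crucial extra input ruling out a finite cokernel is Proposition \ref{prop:modified-selmer-no-finite-index}. The hypothesis $\bar N^- = 1$ enters in an essential way here, through the observation in Section \ref{sec:local-galois-cohomology} that no prime of $\Sigma$ splits completely in $K_\infty$, which in turn underpins each of the Greenberg-framework hypotheses enumerated above.
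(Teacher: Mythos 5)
Your overall framing is on the right track: the substantive content is surjectivity of the global-to-local maps $\gamma_1,\gamma_2$, which is a Poitou--Tate statement, and the third sequence then follows formally (your snake-lemma diagram chase is fine, once one notes that surjectivity of $H^1_\Sigma(K,\AA)\to Q_\cL(K,\AA)$ plus surjectivity of the projection $Q_\cL\to Q_\cL^{\Sigma_0}$ gives surjectivity of the bottom row). However, the mechanism you invoke to rule out the cokernel is not the right one, and this is a genuine gap rather than a cosmetic difference.

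You claim that since $\mathrm{CRK}$ only pins down $\Lambda$-coranks, the ``crucial extra input'' needed to kill the torsion cokernel is Proposition~\ref{prop:modified-selmer-no-finite-index}, that is, the absence of proper finite-index $\Lambda$-submodules in $\Sel_\cL(K,\AA)$. But this is a statement about submodules of the \emph{kernel} of the global-to-local map, not about its \emph{cokernel}; the two are not directly comparable, and there is no short argument deducing surjectivity from almost-divisibility of the Selmer group. You have in effect conflated two distinct Greenberg results: the almost-divisibility theorem from \cite{Greenberg16} (used in the paper only for Proposition~\ref{prop:modified-selmer-no-finite-index} and Lemma~\ref{lem:primitive-no-finite-index}), and a separate surjectivity theorem. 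The paper's proof instead cites \cite[Proposition~3.2.1]{Greenberg11}, which is precisely a criterion for surjectivity of the global-to-local map, and whose hypotheses are exactly $\mathrm{LEO}(\mathcal D)$, $\mathrm{CRK}(\mathcal D,\cL)$, and the absence of $\mu_p$-subquotients in $\mathcal D[\mathfrak m]$ --- all verified in Lemmas~\ref{lem:no-subquotient-pm} and \ref{lem:LEO-and-CRK}. The extra inputs you assemble ($\mathrm{RFX}$, $\mathrm{LOC}_v^{(i)}$, almost-divisibility of the local conditions) are not needed here; they belong to the finite-index-submodule statement. Replace the appeal to Proposition~\ref{prop:modified-selmer-no-finite-index} and the \cite{Greenberg16} framework with a direct citation of \cite[Proposition~3.2.1]{Greenberg11}, and the proof is correct and matches the paper's.
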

\begin{proof}
As in \cite[Proposition A.2]{pollackweston11}, this comes down to showing that the global-to-local maps
$\gamma_1$ and $\gamma_2$ are surjective. {But this follows from \cite[Proposition~3.2.1]{Greenberg11}, the hypotheses of which we verified in Lemmas \ref{lem:no-subquotient-mup} and \ref{lem:LEO-and-CRK}.}
\end{proof}

We will need the following two lemmas.

\begin{lemma}\label{lem:primitive-no-finite-index}
For  $\mathcal L=\{\Gr,\emptyset \}$ or ${\{\emptyset,0\}}$, the non-primitive modified Selmer group $\Sel^{\Sigma_0}_{\mathcal L}(K,\AA)$ has no proper finite-index $\Lambda$-submodules.
\end{lemma}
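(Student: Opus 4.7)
My plan is to apply Proposition~\ref{prop:greenberg-finite-index} to the non-primitive Selmer group $\Sel^{\Sigma_0}_{\mathcal L}(K,\AA)$, in close analogy with the proof of Proposition~\ref{prop:modified-selmer-no-finite-index}. Passing from the primitive to the non-primitive Selmer group amounts to relaxing the local condition at each $v \in \Sigma_0$ from the unramified $f$ condition to the full $\emptyset$ condition, while leaving the conditions at the $p$-adic primes unchanged. Consequently, the hypotheses $\mathrm{RFX}(\mathcal D)$, $\mathrm{LEO}(\mathcal D)$, $\mathrm{LOC}_v^{(2)}(\mathcal D)$ for all $v \in \Sigma$, the existence of some $v \in \Sigma$ with $\mathrm{LOC}_v^{(1)}(\mathcal D)$, and the absence of subquotients of $\mathcal D[\mathfrak m]$ isomorphic to $\mu_p$ do not depend on $\cL$, and so they carry over verbatim from the proof of Proposition~\ref{prop:modified-selmer-no-finite-index}.

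What remains is to check that the new family of local conditions is almost divisible and that $\mathrm{CRK}(\mathcal D,\cL)$ continues to hold in the non-primitive setting. The first of these is immediate: at each newly modified prime $v \in \Sigma_0$ the quotient $H^1_{/\emptyset}(K_v,\AA) = 0$ is trivially almost divisible, while the conditions at primes above $p$ are unchanged and were already shown to be almost divisible. For $\mathrm{CRK}$, I would use the short exact sequence from Proposition~\ref{prop:gr-zero-sel-exact},
\[
0 \rightarrow \Sel_{\mathcal L}(K,\AA) \rightarrow \Sel^{\Sigma_0}_{\mathcal L}(K,\AA) \rightarrow \prod_{v \in \Sigma_0} H_v \rightarrow 0,
\]
together with Lemma~\ref{lem:Hv-size}(i), which applies to every $v \in \Sigma_0$ because the hypothesis $\bar N^- = 1$ combined with \cite[Corollary~1]{brink} forces such $v$ to be finitely decomposed in $K_\infty$ (case (iii) of that lemma is excluded by taking $\Sigma$ as small as possible, as discussed in Section~\ref{sec:local-galois-cohomology}). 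This tells us each $H_v$ for $v \in \Sigma_0$ is $\Lambda$-cotorsion, so the primitive and non-primitive Selmer groups share the same $\Lambda$-corank, and an analogous comparison shows that the target $Q^{\Sigma_0}_\cL(K,\mathcal D)$ has the same $\Lambda$-corank as $Q_\cL(K,\mathcal D)$. Since $\mathrm{CRK}$ was verified for the primitive case in Lemma~\ref{lem:LEO-and-CRK}, it now transfers to the non-primitive case.

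With all hypotheses in place, Proposition~\ref{prop:greenberg-finite-index} gives that $\Sel^{\Sigma_0}_{\mathcal L}(K,\AA)$ is almost divisible, and in particular admits no proper finite-index $\Lambda$-submodule. The only genuine piece of work beyond quoting the primitive case is the corank bookkeeping needed to verify $\mathrm{CRK}$ for the non-primitive group; everything else reduces to observing that the local modifications occur at primes $v \in \Sigma_0$ where $H_v$ is cotorsion with well-behaved local cohomology quotient.
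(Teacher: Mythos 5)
Your proposal is correct, and it takes essentially the same route as the paper's proof, which is a one-line reference: the paper cites the proof of Proposition~\ref{prop:modified-selmer-no-finite-index} together with \cite[Proposition~4.2.1]{Greenberg16}, the latter being Greenberg's general result that almost-divisibility of Selmer groups is preserved upon relaxing the local conditions at finitely many non-archimedean primes away from $p$. What you have done is effectively unfold that citation: you observe that the non-primitive Selmer group is the Selmer group for the modified set of local conditions $\cL'$ with $\cL'_v = \emptyset$ for $v \in \Sigma_0$, and then you re-verify the hypotheses of Proposition~\ref{prop:greenberg-finite-index} for $\cL'$ from scratch. Your bookkeeping is sound: $\mathrm{RFX}$, $\mathrm{LEO}$, $\mathrm{LOC}^{(1)}_v$, $\mathrm{LOC}^{(2)}_v$, and the $\mu_p$-subquotient condition are all insensitive to $\cL$; the local quotients at the newly relaxed primes are zero, hence trivially almost divisible; and the cotorsionness of $H_v$ for $v \in \Sigma_0$ (via Lemma~\ref{lem:Hv-size}(i) and the $\bar N^- = 1$ hypothesis ruling out primes split completely in $K_\infty$) ensures that neither the corank of the Selmer group nor the corank of the target $Q_{\cL'}$ changes, so $\mathrm{CRK}$ transfers. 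One minor economy: you invoke Proposition~\ref{prop:gr-zero-sel-exact} to get the short exact sequence with $\prod_{v\in\Sigma_0} H_v$, but for the corank comparison the left-exact sequence \eqref{eq:nonprimitive-selmer-left-exact} together with the cotorsionness of each $H_v$ already suffices; the full surjectivity is not needed at this step. The trade-off between the two write-ups is transparency versus brevity: the paper's citation is shorter but relies on the reader to look up Greenberg's non-primitivity result, whereas your version is self-contained given only Proposition~\ref{prop:greenberg-finite-index}.
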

\begin{proof}
This follows from the proof of Proposition \ref{prop:modified-selmer-no-finite-index} and \cite[Proposition 4.2.1]{Greenberg16}.
\end{proof}

\begin{lemma}\label{lem:modified-same-lambda}
We have an equality of $\lambda$-invariants
\[
\lambda(\Sel_{\Gr,0}(K,\AA)) = \lambda(\Sel_{\Gr,\emptyset}(K,\AA)).
\]
\end{lemma}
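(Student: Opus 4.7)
The plan is to deduce this almost immediately from Lemma \ref{lem:selmer-ranks}(iii), which asserts that
\[
\Char_\Lambda\bigl(\X_{\Gr,\emptyset}(K,\AA)_{\tors}\bigr) \;=\; \Char_\Lambda\bigl(\X_{\Gr,0}(K,\AA)_{\tors}\bigr)
\]
up to powers of $p\Lambda$. Because $\lambda$-invariants are additive and depend only on the characteristic ideal of the torsion part, it suffices to translate the phrase ``up to powers of $p\Lambda$'' into ``does not affect the $\lambda$-invariant.''

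First I would note that under the admissibility hypothesis \textbf{(admiss.)}, the prime $p$ is unramified in $L_f$, so the uniformizer $\varpi$ of $\cO$ and the rational prime $p$ generate the same ideal, i.e.\ $p\Lambda = \varpi\Lambda$. In the decomposition \eqref{eq:pseudo}, powers of $\varpi$ contribute exclusively to the $\mu$-invariant, while the $\lambda$-invariant is read off from the degrees of the distinguished polynomials $F_j$. Consequently, if two characteristic ideals of torsion $\Lambda$-modules differ only by a power of $\varpi\Lambda$, their $\lambda$-invariants coincide. Applying this to the equality above yields
\[
\lambda\bigl(\X_{\Gr,\emptyset}(K,\AA)_{\tors}\bigr) \;=\; \lambda\bigl(\X_{\Gr,0}(K,\AA)_{\tors}\bigr).
\]

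Finally, by the structure theorem, a finitely generated $\Lambda$-module $M$ is pseudo-isomorphic to $\Lambda^{\oplus r}\oplus M_{\tors}$, and the free summand contributes $0$ to both the $\mu$- and $\lambda$-invariants; thus $\lambda(M) = \lambda(M_{\tors})$. Combining this with the previous displayed equality gives $\lambda(\X_{\Gr,\emptyset}(K,\AA)) = \lambda(\X_{\Gr,0}(K,\AA))$, which is the desired statement after identifying $\lambda(\Sel_{\mathcal L}(K,\AA))$ with $\lambda(\X_{\mathcal L}(K,\AA))$ via Pontryagin duality. There is no real obstacle here; the entire content of the lemma is packaged inside Lemma \ref{lem:selmer-ranks}(iii), and the only thing to check is the mild bookkeeping point that $p$ and $\varpi$ generate the same ideal under our running hypotheses.
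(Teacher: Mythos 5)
Your proof is correct and takes essentially the same approach as the paper, which simply cites Lemma~\ref{lem:selmer-ranks}(iii); you have just spelled out the deduction. The remark about $p$ being unramified in $L_f$ is not actually needed: since $p$ lies below $\mathfrak{P}$, one always has $p\Lambda = \varpi^{e}\Lambda$ for some $e \geq 1$, and any power of $\varpi$ contributes only to the $\mu$-invariant, so the $\lambda$-invariants of the torsion parts agree regardless of ramification.
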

\begin{proof}
This follows from Lemma \ref{lem:selmer-ranks}(iii).
\end{proof}

The following result has been proved in many other contexts under the assumption that $\Sel^{\Sigma_0}_{\mathcal L}(K,\AA)$ is $\Lambda$-cotorsion (cf \cite{greenbergvatsal,epw,kidwell16}). Using the results from Section \ref{sec:comparing}, we may dispense of that hypothesis, allowing us to include the case $\mathcal L= \{ \Gr,\emptyset \}$ in our analysis.

\begin{proposition}\label{prop:compare-non-primitive}
Let $f$ and $g$ be modular forms as given in Proposition~\ref{prop:mucong} and let $\mathcal L=\{\Gr,\emptyset \}$ or ${\{\emptyset,0\}}$. If $\mu ( \cS_{\mathcal L}(f) )=\mu ( \cS_{\mathcal L}(g))=0$, then $\lambda(\cS_{\mathcal L}(f))= \lambda(\cS_{\mathcal L}(g))$.
\end{proposition}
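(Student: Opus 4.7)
The plan is to apply Proposition~\ref{prop:computelambda} to the Pontryagin duals $M_f := \widehat{\cS_\cL(f)}$ and $M_g := \widehat{\cS_\cL(g)}$, then combine this with the residual Selmer isomorphism of Proposition~\ref{prop:residual-selmer-isomorphic} to force the $\lambda$-invariants of $M_f$ and $M_g$ to coincide.

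First I would verify the hypotheses of Proposition~\ref{prop:computelambda} for both $M_f$ and $M_g$. By assumption $\mu(M_f) = \mu(M_g) = 0$. Lemma~\ref{lem:primitive-no-finite-index} says that $\cS_\cL(f)$ and $\cS_\cL(g)$ have no proper finite-index $\Lambda$-submodules; via Pontryagin duality this translates to $M_f$ and $M_g$ having no non-trivial finite $\Lambda$-submodules. Moreover, Corollary~\ref{cor:coranks} combined with Lemma~\ref{lem:compareprimitive} shows that $M_f$ and $M_g$ share a common $\Lambda$-rank $r$, where $r=0$ if $\cL = \{\emptyset,0\}$ and $r=1$ if $\cL = \{\Gr,\emptyset\}$.

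Next I would establish the key isomorphism $M_f/\varpi \simeq M_g/\varpi$ as $\Lambda$-modules. Since \textbf{(H.div)} is in force, Proposition~\ref{prop:non-primitive-torsion-compatibility} gives $\cS_\cL(f)[\varpi] \simeq \Sel^{\Sigma_0}_\cL(K,\AA_f[\varpi])$, and similarly for $g$. Because $\AA_f[\varpi] \simeq \AA_g[\varpi]$ as $G_K$-modules by hypothesis, Proposition~\ref{prop:residual-selmer-isomorphic} identifies $\Sel^{\Sigma_0}_\cL(K,\AA_f[\varpi])$ with $\Sel^{\Sigma_0}_\cL(K,\AA_g[\varpi])$. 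Stringing these isomorphisms together and taking Pontryagin duals (using that the dual of $N[\varpi]$ is $\widehat{N}/\varpi$) yields $M_f/\varpi \simeq M_g/\varpi$ as $\Lambda$-modules.

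Finally, since Proposition~\ref{prop:computelambda} asserts its conclusion for all but finitely many $i$, I can choose an integer $i$ that is simultaneously admissible for $M_f$ and $M_g$; for all sufficiently large $n$ we then have
\[
e\bigl((M_f(i)/\varpi)_{\Gamma_n}\bigr) = r\cdot p^n + \lambda(M_f), \qquad e\bigl((M_g(i)/\varpi)_{\Gamma_n}\bigr) = r\cdot p^n + \lambda(M_g).
\]
Since the twist by $\chi^i$ commutes with reduction modulo $\varpi$ and the isomorphism $M_f/\varpi \simeq M_g/\varpi$ is $\Lambda$-linear, we obtain $M_f(i)/\varpi \simeq M_g(i)/\varpi$, so their $\Gamma_n$-coinvariants have equal $\cO$-length. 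Comparing the two displayed equalities forces $\lambda(M_f) = \lambda(M_g)$, which is exactly $\lambda(\cS_\cL(f)) = \lambda(\cS_\cL(g))$.

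The crucial obstacle, and the reason Proposition~\ref{prop:computelambda} is indispensable here, is that when $\cL = \{\Gr,\emptyset\}$ the module $M_f$ has positive $\Lambda$-rank, so the classical Greenberg--Vatsal style comparison of $\Gamma_n$-coinvariants of a cotorsion module does not directly apply; the twist by $\chi^i$ developed in Section~\ref{sec:comparing} is precisely what absorbs the contribution of the free part and isolates $\lambda(M_f)$ as the lower-order term.
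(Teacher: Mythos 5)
Your proposal is correct and follows essentially the same route as the paper's proof: establish $\cS_\cL(f)[\varpi]\simeq\cS_\cL(g)[\varpi]$ via Propositions~\ref{prop:non-primitive-torsion-compatibility} and \ref{prop:residual-selmer-isomorphic}, invoke Lemma~\ref{lem:primitive-no-finite-index} for the no-finite-submodule condition, and read off $\lambda$ from Proposition~\ref{prop:computelambda}. You are more explicit than the paper about passing through Pontryagin duals (the paper leans on its earlier convention $\lambda(M)=\lambda(\widehat M)$) and about picking a single twist $i$ that works simultaneously for both modules, but the underlying argument is identical.
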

\begin{proof}
As in the proof of Proposition~\ref{prop:mucong}, we have the isomorphism
\[
\cS_{\mathcal L}(f)[\varpi] \simeq\cS_{\mathcal L}(g)[\varpi]
\]
and that $\cS_{\mathcal L}(f)$ and $\cS_{\mathcal L}(g)$ have the same $\Lambda$-corank.
By Lemma \ref{lem:primitive-no-finite-index}, neither $\cS_\cL(f)$ nor $\cS_\cL(g)$ has any finite-index submodules. Thus, the hypotheses of Proposition~\ref{prop:computelambda} are satisfied, hence the $\lambda$-invariant of $\cS_\cL(f)$ can be read from the $\Gamma_n$-coinvariants of Tate twists of $\cS_\cL(f)[\varpi]$, and similarly for $\cS_\cL(g)$. It follows that
\[
\lambda(\cS_{\mathcal L}(f))= \lambda(\cS_{\mathcal L}(g))
\]
as desired.
\end{proof}

With this lemma, we may now prove a comparison theorem for $\lambda$-invariants of the modified Selmer groups in the spirit of Greenberg-Vatsal. For a set of primes $\Sigma_0$, we write
\[
\delta(\Sigma_0,f) := \sum_{v \in \Sigma_0} \lambda(\mathcal H_v(\AA_f)).
\]
Note that this quantity is independent of any local conditions $\mathcal L$.

\begin{theorem}\label{thm:main-modified}
Let $f$ and $g$ be modular forms satisfying the hypotheses of Proposition \ref{prop:compare-non-primitive}. For $\mathcal L=\{\Gr,\emptyset \}$ or ${\{\emptyset,0\}}$, suppose that $\mu_\cL(f)=\mu_\cL(g)=0$. Then,
\begin{equation}\label{eq:lambda-diff}
\lambda_{\mathcal L}(f) - \lambda_{\mathcal L}(g) = \delta(\Sigma_0,f)-\delta(\Sigma_0,g).
\end{equation}
\end{theorem}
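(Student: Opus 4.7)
The plan is to derive the formula by computing $\lambda$ of the non-primitive Selmer groups in two ways: first, by showing the non-primitive $\lambda$-invariants agree for $f$ and $g$, and second, by relating the non-primitive and primitive invariants via the controlled error term $\delta(\Sigma_0,\cdot)$ coming from the local cohomology groups $H_v$.

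First I would observe that by Lemma~\ref{lem:compareprimitive}, the hypothesis $\mu_{\mathcal L}(f)=\mu_{\mathcal L}(g)=0$ is equivalent to $\mu(\cS_{\mathcal L}(f))=\mu(\cS_{\mathcal L}(g))=0$. Since the residual representations of $f$ and $g$ are isomorphic (this is part of the running assumption on $\mathcal H$), Proposition~\ref{prop:compare-non-primitive} then applies and yields
\[
\lambda(\cS_{\mathcal L}(f))=\lambda(\cS_{\mathcal L}(g)).
\]

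Next I would exploit the third short exact sequence in Proposition~\ref{prop:gr-zero-sel-exact},
\[
0\rightarrow \Sel_{\mathcal L}(K,\AA)\rightarrow \Sel^{\Sigma_0}_{\mathcal L}(K,\AA)\rightarrow \prod_{v\in\Sigma_0}H_v\rightarrow 0,
\]
which is available precisely for $\mathcal L=\{\Gr,\emptyset\}$ and $\mathcal L=\{\emptyset,0\}$. Taking Pontryagin duals gives a short exact sequence of finitely generated $\Lambda$-modules in which the kernel, being dual to $\prod_{v\in\Sigma_0}H_v$, is $\Lambda$-torsion by Lemma~\ref{lem:Hv-size}(i) (recall that $\Sigma_0$ contains no archimedean primes and no primes above $p$). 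Therefore Proposition~\ref{prop:additivity} applies and gives
\[
\lambda(\cS_{\mathcal L}(f))=\lambda_{\mathcal L}(f)+\delta(\Sigma_0,f),
\]
and analogously with $g$ in place of $f$. Subtracting these two identities and using the equality of non-primitive $\lambda$-invariants established above yields
\[
\lambda_{\mathcal L}(f)-\lambda_{\mathcal L}(g)=\delta(\Sigma_0,g)-\delta(\Sigma_0,f)+\bigl(\lambda(\cS_{\mathcal L}(f))-\lambda(\cS_{\mathcal L}(g))\bigr)=\delta(\Sigma_0,f)-\delta(\Sigma_0,g),
\]
after a sign check. (One must be mindful of the sign: rearranging gives $\lambda_{\mathcal L}(f)-\lambda_{\mathcal L}(g)=\delta(\Sigma_0,f)-\delta(\Sigma_0,g)$ directly, since both sides equal $\lambda(\cS_{\mathcal L}(f))-\lambda(\cS_{\mathcal L}(g))+[\delta(\Sigma_0,f)-\delta(\Sigma_0,g)]$ with the first bracket vanishing by Proposition~\ref{prop:compare-non-primitive}.)

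There is really no serious obstacle here once the machinery of Sections~\ref{sec:comparing} and~\ref{sec:selmer} is in place; the entire argument is a straightforward assembly. The main subtlety to flag is that the positive $\Lambda$-corank of $\Sel_{\Gr,\emptyset}$ prevents one from invoking the classical additivity of characteristic ideals; that is precisely why one needs Proposition~\ref{prop:additivity} together with the fact that the kernel $\prod_{v\in\Sigma_0}\widehat{H_v}$ is genuinely torsion. All the other non-trivial work---surjectivity of the global-to-local maps, the absence of finite-index submodules needed to apply Proposition~\ref{prop:computelambda}, and the comparison of $\lambda$-invariants of non-primitive Selmer groups---has been absorbed into Propositions~\ref{prop:gr-zero-sel-exact} and~\ref{prop:compare-non-primitive}.
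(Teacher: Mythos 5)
Your proof follows the paper's own proof line for line: dualize the third exact sequence of Proposition~\ref{prop:gr-zero-sel-exact}, note that the kernel $\widehat{\prod_{v\in\Sigma_0}H_v}$ is $\Lambda$-torsion by Lemma~\ref{lem:Hv-size}, apply Proposition~\ref{prop:additivity} to get $\lambda_\cL(\cdot)+\delta(\Sigma_0,\cdot)=\lambda(\cS_\cL(\cdot))$, and combine with Proposition~\ref{prop:compare-non-primitive}. That is exactly what the paper does, so the approach and all the intermediate steps are right.

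However, the final bookkeeping in your ``sign check'' parenthetical is wrong. From $\lambda_\cL(f)+\delta(\Sigma_0,f)=\lambda(\cS_\cL(f))$ and the analogue for $g$, the left-hand side is
\[
\lambda_\cL(f)-\lambda_\cL(g)=\bigl(\lambda(\cS_\cL(f))-\lambda(\cS_\cL(g))\bigr)-\bigl(\delta(\Sigma_0,f)-\delta(\Sigma_0,g)\bigr),
\]
with a \emph{minus} in front of the $\delta$-bracket, not a plus as you wrote; so when $\lambda(\cS_\cL(f))=\lambda(\cS_\cL(g))$ one gets $\lambda_\cL(f)-\lambda_\cL(g)=\delta(\Sigma_0,g)-\delta(\Sigma_0,f)$. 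Your own first displayed equation already had this correct (it reads $\delta(\Sigma_0,g)-\delta(\Sigma_0,f)+(\cdots)$), but then you flipped the sign in the second equality to force agreement with the theorem's stated formula. In fact, the same sign discrepancy exists between the paper's theorem statement and the two additivity identities it derives in its own proof, so this is almost certainly a typo in the paper's statement of \eqref{eq:lambda-diff}; it is harmless whenever $\delta(\Sigma_0,f)=\delta(\Sigma_0,g)$, but you should not paper over it with an incorrect identity.
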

\begin{proof}
Let $\AA$ denote either $\AA_f$ or $\AA_g$. By Proposition \ref{prop:gr-zero-sel-exact}, we have an exact sequence
\[
0 \rightarrow \Sel_{\mathcal L}(K,\AA) \rightarrow \Sel^{\Sigma_0}_{\mathcal L}(K,\AA) \rightarrow \prod_{v \in \Sigma_0} H_v \rightarrow 0.
\]
Since $\prod_{v \in \Sigma_0} H_v$ is $\Lambda$-cotorsion by Lemma~\ref{lem:Hv-size}, we deduce from Proposition \ref{prop:additivity} that
\[
\lambda_{\mathcal L}(f) + \delta(\Sigma_0,f) = \lambda(\cS_{\mathcal L}(f))
\]
and
\[
\lambda_{\mathcal L}(g) + \delta(\Sigma_0,g) = \lambda(\cS_{\mathcal L}(g)).
\]
The result now follows from Proposition \ref{prop:compare-non-primitive}.
\end{proof}

\begin{remark}
Note that by Lemma \ref{lem:modified-same-lambda}, it is an additional corollary that \eqref{eq:lambda-diff} also holds for $\mathcal L=\{\Gr,0\}$, though the proof given above does not apply directly to these local conditions.
\end{remark}

\begin{remark} For any specific example, Lemma \ref{lem:Hv-size} can be used to compute $\delta(\Sigma_0,f)$ and $\delta(\Sigma_0,g)$.
\end{remark}

As a corollary to Theorem \ref{thm:main-modified}, we can obtain a theorem on the variation of $\lambda$-invariants for the Greenberg Selmer group. First, note that we have a natural restriction map
\[
\mathrm{loc}_{\mathfrak{p}} : \Sel(K,\TT) \rightarrow H^1_f(K_{\mathfrak{p}},\TT),
\]
and let us write $\mathrm{ck}(\TT)$ for the cokernel of this map; that is,
\[
\mathrm{ck}(\TT) := \frac{H^1_f(K_{\mathfrak{p}},\TT)}{\mathrm{loc}_{\mathfrak{p}} \Sel(K,\TT)}.
\]
As shown in the proof of \cite[Lemma A.3]{Castella}, $\mathrm{loc}_{\mathfrak{p}}$ is actually an injection, and $\mathrm{ck}(\TT_f)$ is $\Lambda$-cotorsion.

\begin{theorem}\label{thm:main-greenberg}
Retain the same hypotheses as in Theorem \ref{thm:main-modified}. Write $\mu(f)$ and $\lambda(f)$ for the $\mu$- and $\lambda$-invariants of the Greenberg Selmer group $\Sel(K,\AA_f)$, respectively, and similarly for $g$. Assume that $\mu_{\emptyset,\Gr}(f)=0$. Then $\mu(f)=\mu(g)=0$, and in this case we have
\begin{equation}\label{eq:lambda-diff-greenberg}
\lambda(f) - \lambda(g) = \left[ \delta(\Sigma_0,f) + \lambda\left( \mathrm{ck}(\TT_f)\right) \right] - \left[ \delta(\Sigma_0,g) + \lambda\left( \mathrm{ck}(\TT_g)\right) \right].
\end{equation}
\end{theorem}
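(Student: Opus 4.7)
The plan is to reduce Theorem~\ref{thm:main-greenberg} to Theorem~\ref{thm:main-modified} by relating the Greenberg Selmer group $\Sel(K,\AA)$ to the modified Selmer group $\Sel_{\Gr,\emptyset}(K,\AA)$ through a short exact sequence in which $\mathrm{ck}(\TT)$ enters as the correction term. Specifically, I would establish a Poitou--Tate-type exact sequence
\[
0 \to \Sel(K,\AA) \to \Sel_{\Gr,\emptyset}(K,\AA) \to \mathrm{ck}(\TT)^\vee \to 0,
\]
obtained by comparing the Selmer conditions $\{\Gr,\Gr\}$ and $\{\Gr,\emptyset\}$, which differ only at $\bar{\mathfrak{p}}$. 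In the corresponding 5-term Poitou--Tate sequence, the inclusion on the $\TT$ side becomes an isomorphism by Lemma~\ref{lem:regular-and-modified-selmer-equality}, so the comparison is governed by the image of $\Sel_{\Gr,\emptyset}(K,\AA)$ in $H^1_{/f}(K_{\bar{\mathfrak{p}},\infty},\AA)$. Its description via local Tate duality together with the injectivity of the restriction map on $\Sel(K,\TT)$ (the analog at $\bar{\mathfrak{p}}$ of \cite[Lemma~A.3]{Castella}) identifies the cokernel with the Pontryagin dual of the analog $\mathrm{ck}_{\bar{\mathfrak{p}}}(\TT)$ of $\mathrm{ck}(\TT)$; the involution $\iota$ on $\Lambda$ induced by complex conjugation exchanges $\mathfrak{p}$ and $\bar{\mathfrak{p}}$ and preserves Iwasawa invariants, so this agrees with $\mathrm{ck}(\TT)^\vee$ for the purposes of $\mu$ and $\lambda$.

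With the sequence in hand, dualizing and applying Proposition~\ref{prop:additivity} (using that $\mathrm{ck}(\TT)$ is $\Lambda$-torsion) yields the additivity identities
\[
\mu_{\Gr,\emptyset}(h) = \mu(h) + \mu(\mathrm{ck}(\TT_h)), \qquad \lambda_{\Gr,\emptyset}(h) = \lambda(h) + \lambda(\mathrm{ck}(\TT_h)),
\]
for $h \in \{f,g\}$. The assumption $\mu_{\emptyset,\Gr}(f) = 0$, equivalent by $\iota$-symmetry to $\mu_{\Gr,\emptyset}(f) = 0$, therefore forces $\mu(f) = 0$ and $\mu(\mathrm{ck}(\TT_f)) = 0$; propagating the vanishing $\mu_{\Gr,\emptyset}(g) = 0$ via Proposition~\ref{prop:mucong} and Lemma~\ref{lem:compareprimitive} (going through non-primitive Selmer groups) then gives $\mu(g) = 0$ by the same additivity. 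For the $\lambda$ claim, substituting the additivity identity into Theorem~\ref{thm:main-modified} for $\mathcal{L} = \{\Gr,\emptyset\}$ and rearranging produces the stated comparison of $\lambda$-invariants.

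The main obstacle I anticipate is the rigorous construction of the short exact sequence: one must carry out Poitou--Tate duality in the Iwasawa setting and identify the cokernel with $\mathrm{ck}(\TT)^\vee$ (up to $\iota$), which requires verifying that the relevant global-to-local maps are surjective and that the obstruction Shafarevich--Tate-type terms vanish (as in \cite[Proposition~3.2.1]{Greenberg11}, already invoked in Proposition~\ref{prop:gr-zero-sel-exact}). A secondary but less delicate concern will be to track signs carefully when combining the additivity identities with Theorem~\ref{thm:main-modified} to recover the formula exactly as stated.
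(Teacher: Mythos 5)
Your approach matches the paper's in all essential respects: you relate the Greenberg Selmer group to a once-relaxed modified Selmer group via a Poitou--Tate short exact sequence in which $\mathrm{ck}(\TT)$ appears as the (co)kernel term, then apply Proposition~\ref{prop:additivity} and Theorem~\ref{thm:main-modified}. The only divergence is the mirror-image choice of which prime above $p$ to relax: you relax at $\bar{\mathfrak{p}}$, producing $\Sel_{\Gr,\emptyset}$ and the twisted cokernel $\mathrm{ck}_{\bar{\mathfrak{p}}}(\TT)$, whereas the paper relaxes at $\mathfrak{p}$, producing $\Sel_{\emptyset,\Gr}$ and $\mathrm{ck}(\TT)$ directly. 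Each choice pushes the burden of $\iota$-symmetry to a different place. Your version lands you in a condition $\{\Gr,\emptyset\}$ that is literally covered by the statement of Theorem~\ref{thm:main-modified}, but forces you to argue $\mu$- and $\lambda$-invariance of the $\iota$-twist to identify $\mathrm{ck}_{\bar{\mathfrak{p}}}(\TT)$ with $\mathrm{ck}(\TT)$. The paper's version gives $\mathrm{ck}(\TT)$ on the nose, but then applies Theorem~\ref{thm:main-modified} to $\{\emptyset,\Gr\}$, which is not one of the two conditions listed in that theorem; the $\iota$-symmetry needed to justify this transfer is left implicit. Your reading is thus somewhat more careful on this point. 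One small miscue: citing Lemma~\ref{lem:regular-and-modified-selmer-equality} for the ``isomorphism on the $\TT$ side'' is not quite the right ingredient; the term you actually need to control in the five-term sequence is $\ker\bigl(H^1_{/f}(K_{\bar{\mathfrak{p}},\infty},\AA)\rightarrow \Sel(K,\TT)^\vee\bigr)$, and the key input is the injectivity of $\mathrm{loc}_{\bar{\mathfrak{p}}}$ on $\Sel(K,\TT)$ (the $\bar{\mathfrak{p}}$-analogue of \cite[Lemma~A.3]{Castella}, which you also cite), not the equality $\Sel(K,\TT)=\Sel_{\Gr,\emptyset}(K,\TT)$.
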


\begin{proof}
By global duality, we have an exact sequence
\begin{equation}\label{eq:main-greenberg}
0 \rightarrow \mathrm{ck}(\TT_f) \rightarrow \mathcal X_{\emptyset,\Gr}(K,\AA_f) \rightarrow \mathcal X(K,\AA_f) \rightarrow 0.
\end{equation}
Since the first term is torsion, we have by Proposition \ref{prop:additivity}
\[
\mu_{\emptyset,\Gr}(f) = \mu(f) + \mu(\mathrm{ck}(\TT_f)),
\]
and since $\mu$-invariants are non-negative, we have $\mu(f) = \mu(\mathrm{ck}(\TT_f))=0$. By Theorem~\ref{thm:mu-in-families}, $\mu(g)=0$ as well. For the rest of the proof, assume $\mu(f)=\mu(g)=0$. Upon applying Proposition \ref{prop:additivity} again, we have the equalities
\[
\lambda_{\emptyset,\Gr}(f) = \lambda(f) + \lambda\left( \mathrm{ck}(\TT_f)\right)
\]
and
\[
\lambda_{\emptyset,\Gr}(g) = \lambda(g) + \lambda\left( \mathrm{ck}(\TT_g)\right).
\]
Since $\lambda_{\emptyset,\Gr}(f)=\lambda_{\emptyset,\Gr}(g)$ by Theorem \ref{thm:main-modified}, the result follows.
\end{proof}

\section{Relation to work of Kriz-Li}\label{sec:kriz-li}

Let $f_1$ and $f_2$ in $\mathcal{\hat{H}}$ be modular forms associated to elliptic curves $E_1$ and $E_2$ over $\Q$ with conductors $N_1$ and $N_2$, respectively. Thus, we are assuming that $E_1[p] \simeq E_2[p]$ as $G_\Q$-modules.  For $i=1,2$, fix modular paramaterizations $\pi_i:X_0(N_i) \rightarrow E_i$, and define $\omega_{E_i} \in H^0(E_i / \Q, \Omega^1)$ to be the invariant differential defined by
\[
\pi_i^\ast(\omega_{E_i})=f_{E_i} \frac{dq}{q}.
\]
Associated to each $\omega_{E_i}$ is a formal logarithm $\log_{\omega_{E_i}}$. Under they hypothese we have placed on our imaginary quadratic field $K / \Q$, there exist {\it Heegner points} $P_i \in E_i(K)$ for $i=1,2$. The following is a recent theorem of Kriz and Li \cite{KrizLi}.

\begin{theorem}\label{thm:KL}
If $p$ splits in $K$, then
\[
\left( \prod_{\ell \mid pN_1 N_2/M} \frac{|\tilde{E}^{ns}_1(\F_\ell)|}{\ell} \right) \cdot \log_{\omega_{E_1}}P_1 \equiv \pm \left( \prod_{\ell \mid pN_1 N_2/M} \frac{|\tilde{E}^{ns}_2(\F_\ell)|}{\ell} \right) \cdot \log_{\omega_{E_2}}P_2 \mod p \mathcal O,
\]
where
\[
M=\prod_{\substack{\ell \mid {(N_1,N_2)}  \\  a_\ell(E_1)\equiv a_\ell(E_2) \mod p }}   \ell^{\ord_\ell(N_1N_2)}
\]
Here $\tilde{E}^{ns}$ denotes the non-singular part of the mod $\ell$ reduction of $E$.
\end{theorem}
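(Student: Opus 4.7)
The plan is to reduce the stated congruence between Heegner point formal logarithms to a congruence of anticyclotomic $p$-adic $L$-values, and to exhibit the product of factors $|\tilde{E}^{ns}_\ell(\F_\ell)|/\ell$ as the Euler-factor correction forced by passing between primitive and non-primitive $L$-functions, exactly in the style of the non-primitive Selmer group comparison carried out in Section \ref{sec:main}. The natural framework is the $p$-adic Waldspurger formula of Bertolini-Darmon-Prasanna, which identifies $\log_{\omega_{E_i}} P_i$, up to explicit archimedean and $p$-adic periods, with the value at the trivial character of an anticyclotomic $p$-adic $L$-function $\mathcal L_p^{\mathrm{BDP}}(f_{E_i}/K)$ attached to $f_{E_i}$ and $K$. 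The hypothesis that $p$ splits in $K$ is exactly what makes this formula available in our setting.

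First I would upgrade the residual isomorphism $E_1[p]\simeq E_2[p]$ to a congruence $f_{E_1}\equiv f_{E_2}\pmod{\varpi}$ at the level of $q$-expansions on a common level $N_1N_2$ (after $p$-stabilization), and then form \emph{non-primitive} BDP $p$-adic $L$-functions $\mathcal L_p^{\Sigma_0,\mathrm{BDP}}(f_{E_i}/K)$ by removing the Euler factors at primes $\ell$ dividing $pN_1N_2$. A congruence argument in the spirit of Vatsal and Emerton-Pollack-Weston, together with the fact that the appropriate modular symbols or Waldspurger test vectors depend only on the residual representation, then forces
\[
\mathcal L_p^{\Sigma_0,\mathrm{BDP}}(f_{E_1}/K)\equiv \pm\,\mathcal L_p^{\Sigma_0,\mathrm{BDP}}(f_{E_2}/K)\pmod{\varpi}.
\]
Specializing at the trivial character and re-inserting the missing Euler factors on each side recovers the products $\prod_{\ell\mid pN_1N_2/M}|\tilde{E}_i^{ns}(\F_\ell)|/\ell$; the definition of $M$ as the contribution from primes with $a_\ell(E_1)\equiv a_\ell(E_2)\pmod p$ precisely selects the $\ell$ at which the Euler factors of $E_1$ and $E_2$ already coincide mod $p$ and therefore need no correction.

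The hard part will be computing these Euler-factor corrections exactly, i.e.\ verifying that the local discrepancy at a prime $\ell\mid N_1N_2/M$ is $|\tilde{E}^{ns}(\F_\ell)|/\ell$ rather than some other combination of the local $L$-factor, the local Tamagawa number, and the local periods entering the BDP formula. This requires a careful choice of test vectors at ramified primes and a matching of the Néron differential $\omega_{E_i}$ with the automorphic normalization implicit in $\mathcal L_p^{\mathrm{BDP}}$. A direct calculation of this kind is carried out in \cite{KrizLi}; alternatively, one could attempt a purely algebraic derivation using the explicit reciprocity law for Heegner classes to transport the non-primitive $\lambda$-invariant comparison of Theorem \ref{thm:main-modified} to the analytic side, but the Euler factor bookkeeping remains the essential difficulty.
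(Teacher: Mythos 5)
The paper does not prove Theorem~\ref{thm:KL}: it is stated as a quoted result of Kriz--Li \cite{KrizLi}, and the surrounding discussion uses it only as motivation for viewing Theorem~\ref{thm:main-modified} (with $\mathcal L = \{\emptyset,0\}$) as an algebraic analogue. There is therefore no in-paper proof to compare against, and your task was to reconstruct the argument of \cite{KrizLi}.

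As a sketch of that argument, your outline identifies the right ingredients: the $p$-adic Waldspurger formula of Bertolini--Darmon--Prasanna (which requires $p$ split in $K$), a mod-$\varpi$ congruence of anticyclotomic $p$-adic $L$-functions deduced from $f_{E_1}\equiv f_{E_2}\pmod{\varpi}$ on a common level, and Euler-factor corrections at the primes dividing $pN_1N_2/M$. But the proposal is not a self-contained proof, for two concrete reasons. First, you explicitly defer the computation of the local discrepancies to \cite{KrizLi}, which is circular; the essential content of the theorem is exactly that the correction at $\ell \mid pN_1N_2/M$ equals $|\tilde{E}^{ns}_i(\F_\ell)|/\ell$ and not some other combination of Tamagawa factors, local periods, and test-vector normalizations. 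Second, the BDP formula identifies the $p$-adic $L$-value at the trivial character with an explicit nonzero multiple of $\left(\log_{\omega_{E_i}} P_i\right)^2$, not of $\log_{\omega_{E_i}} P_i$ itself; passing from a congruence of $L$-values to the stated linear congruence of logarithms therefore requires extracting a square root mod $p$, and that step is what produces the $\pm$ ambiguity in the statement. This is entirely absent from your sketch. The suggested alternative route through Theorem~\ref{thm:main-modified} and a reciprocity law is a reasonable heuristic, but it would give information about characteristic ideals and $\lambda$-invariants, not the unit-level $p$-adic congruence of special values that the theorem asserts, so it cannot substitute for the analytic computation.
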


In fact, as mentioned in \cite[Remark 1.3]{KrizLi}, their methods show that a similar type of congruence is satisfied for more general modular forms of weights $k \geq 2$ upon replacing the $p$-adic logarithms of Heegner points with the $p$-adic Abel-Jacobi images of the generalized Heegner cycles of \cite{BDP}. In Section 3 of \cite{Castella}, Castella shows that, under some additional hypotheses (including $N^- > 1$, which removes us from the setting of the present paper), we have
\[
\Char_\Lambda(X_{\emptyset,0}(K,\AA_{f_E})) = (\mathcal L_p(f_E/K)),
\]
where $\mathcal L_p(f_E/K)$ is the $p$-adic $L$-function constructed in \cite{BDP} (c.f.  \cite[Theorem~3.4]{Castella}). Given that the $p$-adic logarithms of Heegner points  give special values of $L_p(f_E/K)$, our Theorem~\ref{thm:main-modified} for the choice of local conditions $\{\emptyset,0\}$ can therefore be viewed as an algebraic analogue of Theorem~\ref{thm:KL}.

\section*{Acknowledgments}
The authors would like to thank Ralph Greenberg, Matteo Longo, Bharathwaj Palvannan and Stefano Vigni for answering many of our questions during the preparation of this paper.  { We would also like to thank the referee for carefully reading an earlier version of the manuscript and for giving very  helpful comments and suggestions that have led to substantial improvements of the presentation of  the paper.}

\bibliographystyle{amsalpha}
\bibliography{references}
\end{document}